\title{A product estimate, the parabolic Weyl lemma
       and applications}
\author{Joa Weber\\
        Humboldt University Berlin\\
       }
\date{9 October 2009
     }
\newtheorem{theorem}{Theorem}[section]
\newtheorem{corollary}[theorem]{Corollary}
\newtheorem{lemma}[theorem]{Lemma}
\newtheorem{proposition}[theorem]{Proposition}
\theoremstyle{definition}
\newtheorem{remark}[theorem]{Remark}
\theoremstyle{remark}
\newtheorem*{notation}{Notation}
\newcommand{\HH}{{\mathbb{H}}}
\newcommand{\N}{{\mathbb{N}}}
\newcommand{\R}{{\mathbb{R}}}
\newcommand{\Z}{{\mathbb{Z}}}
\newcommand{\Cc}{{\mathcal{C}}}   
\newcommand{\Vv}{{\mathcal{V}}}
\newcommand{\Ww}{{\mathcal{W}}}
\newcommand{\INT}{{\rm int}}       
\newcommand{\supp}{{\rm supp}}     
\newcommand{\grad}{{\rm grad }}    
\newcommand{\W}{{\rm W}}
\newcommand{\norm}{{\rm norm}}
\newcommand{\eps}{{\varepsilon}}
\def\NABLA#1{{\mathop{\nabla\kern-.5ex\lower1ex\hbox{$#1$}}}}
\def\Nabla#1{\nabla\kern-.5ex{}_{#1}}
\def\Tabla#1{\Tilde\nabla\kern-.5ex{}_{#1}}
\def\abs#1{\mathopen|#1\mathclose|}   
\def\Abs#1{\left|#1\right|}            
\def\norm#1{\mathopen\|#1\mathclose\|}
\def\Norm#1{\left\|#1\right\|}
\renewcommand{\Tilde}{\widetilde}
\newcommand{\p}{{\partial}}
\begin{document}
\maketitle


\begin{abstract}
We prove a product estimate
that allows to estimate
the quadratic first order nonlinearity
of the harmonic map flow
in the $L^p$ norm.
Then the parabolic analogue
of Weyl's lemma for the Lapace operator
is established.
Both results are applied to
prove regularity for the heat
flow by parabolic bootstrapping.
\end{abstract}


\section{Introduction and main results}
\label{sec:intro}

There are two main results.
The first result is a product estimate. 
It allows to estimate
the quadratic first order nonlinearity
of the harmonic map flow
in the $L^p$ norm
instead of the $L^{p/2}$ norm --
as one expects at first sight.
A second application, crucial in~\cite{Joa-HEATFLOW},
is to obtain quadratic estimates 
sharp enough to prove
a refined implicit function theorem.

Throughout we identify $S^1=\R/\Z$
and think of $v\in C^\infty(\R\times S^1)$
as a smooth function $v:\R\times\R\to\R$
which satisfies $v(s,t+1)=v(s,t)$.

\begin{theorem}\label{thm:product-estimate}
Fix $2\le p<\infty$.
Then there is a
positive constant
$C_p$ such that
\begin{equation*}
\begin{split}
    &\left(\int_{-T}^0\int_0^1
     \left(\Abs{\p_tv}
     \Abs{\p_tw}\right)^p\, dtds 
     \right)^{1/p}\\
    &\le C_p
     \left(\Norm{v}_p+\Norm{\p_sv}_p
     +\Norm{\p_t\p_tv}_p\right)
     \left(\Norm{\p_tw}_p
     +\Norm{\p_t\p_tw}_p\right)
\end{split}
\end{equation*}
for all compactly supported
smooth maps
$v,w:(-T,0]\times S^1\to\R^k$.
\end{theorem}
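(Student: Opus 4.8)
The plan is to reduce the two-dimensional parabolic product estimate to a one-dimensional Gagliardo--Nirenberg type interpolation inequality applied on each circle slice $\{s\}\times S^1$, and then to integrate in $s$. Fix $s$ and write $f=\p_tv(s,\cdot)$, $g=\p_tw(s,\cdot)$, viewed as $1$-periodic functions on $\R$. The key pointwise input is the classical estimate $\Norm{h}_{L^\infty(S^1)}\le c\,\Norm{h}_{L^p(S^1)}^{1-\theta}\Norm{\p_th}_{L^p(S^1)}^{\theta}+c\,\Norm{h}_{L^p(S^1)}$ for suitable $\theta\in(0,1)$ depending on $p$, together with a two-function variant controlling $\Norm{fg}_{L^p(S^1)}$ by a product of such interpolation norms. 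The upshot should be a slice inequality of the form
\begin{equation*}
\Norm{f g}_{L^p(S^1)}
\le C_p\bigl(\Norm{f}_{L^p(S^1)}+\Norm{\p_tf}_{L^p(S^1)}\bigr)
\bigl(\Norm{g}_{L^p(S^1)}+\Norm{\p_tg}_{L^p(S^1)}\bigr).
\end{equation*}
Translating back, $f=\p_tv(s,\cdot)$ and $\p_tf=\p_t\p_tv(s,\cdot)$, so the right-hand side involves $\Norm{\p_tv(s,\cdot)}_{L^p(S^1)}$ and $\Norm{\p_t\p_tv(s,\cdot)}_{L^p(S^1)}$, and likewise for $w$.

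Next I would raise this to the $p$-th power, integrate over $s\in(-T,0]$, and apply the Cauchy--Schwarz inequality in $s$ to split the product of the two $s$-dependent factors:
\begin{equation*}
\int_{-T}^0 \Norm{\p_tv(s,\cdot)}_{L^p}^p\Norm{\p_tw(s,\cdot)}_{L^p}^p\,ds
\le \Bigl(\int_{-T}^0 \Norm{\p_tv(s,\cdot)}_{L^p}^{2p}ds\Bigr)^{1/2}
\Bigl(\int_{-T}^0 \Norm{\p_tw(s,\cdot)}_{L^p}^{2p}ds\Bigr)^{1/2}.
\end{equation*}
This is where the $\Norm{\p_sv}_p$ term on the right-hand side of the theorem must enter: to bound $\int_{-T}^0\Norm{\p_tv(s,\cdot)}_{L^p}^{2p}ds$ by $\Norm{v}_p+\Norm{\p_sv}_p+\Norm{\p_t\p_tv}_p$ one cannot use the $L^p$ norm in $t$ alone on each slice, but must control the $s$-dependence of the slice norm $s\mapsto\Norm{\p_tv(s,\cdot)}_{L^p(S^1)}$. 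The natural tool is a one-dimensional Sobolev embedding $W^{1,p}(-T,0)\hookrightarrow L^\infty(-T,0)$ (or $L^{2p}$, with the right interpolation weights) applied to this function of $s$, noting $\p_s\Norm{\p_tv(s,\cdot)}_{L^p}^p$ is controlled by $\Norm{\p_tv(s,\cdot)}_{L^p}^{p-1}\Norm{\p_s\p_tv(s,\cdot)}_{L^p}$ and that $\p_s\p_tv=\p_t\p_sv$ integrates against $\p_t$-derivatives after one more integration by parts in $t$, bringing in $\p_tv$ and $\p_t\p_tv$ as well as $\p_sv$. Tracking which derivatives appear and in what combination, so that exactly the norms $\Norm{v}_p,\Norm{\p_sv}_p,\Norm{\p_t\p_tv}_p$ and $\Norm{\p_tw}_p,\Norm{\p_t\p_tw}_p$ show up with the asserted asymmetry, is the heart of the bookkeeping.

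The main obstacle I anticipate is getting the correct interpolation exponents and the asymmetric form of the estimate: the left side has a product of two full $t$-derivatives raised to the power $p$, so naively one only expects control in $L^{p/2}$, and gaining back the full $L^p$ requires using the extra $t$-regularity ($\p_t\p_t$) for \emph{both} factors but the extra $s$-regularity for only \emph{one} of them. I expect this to work out because the ``worst'' direction for a product of two functions is when both concentrate at the same point, and the one-dimensional interpolation in $t$ (which costs $\Norm{\p_t\p_tv}_p$) plus the summability gained from integrating the slice inequality in $s$ (which costs $\Norm{\p_sv}_p$, used only once via Cauchy--Schwarz) together suffice. The compact support hypothesis is used to justify all integrations by parts in $s$ with no boundary terms at $s=-T$, and to make the one-dimensional embeddings in $s$ applicable on the finite interval uniformly in $T$.
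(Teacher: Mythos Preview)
Your overall architecture --- work slice by slice on $\{s\}\times S^1$, then integrate in $s$, and control the $s$--dependence of $s\mapsto\Norm{\p_tv(s,\cdot)}_{L^p(S^1)}$ by differentiating it in $s$ and integrating by parts in $t$ to trade $\p_s\p_tv$ for $\p_sv$ and $\p_t\p_tv$ --- is exactly the scheme the paper uses. The gap is in the way you split the $s$--integral.

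Using Cauchy--Schwarz in $s$ forces you to bound
\[
   \Bigl(\int_{-T}^0 \Norm{\p_tw(s,\cdot)}_{L^p(S^1)}^{2p}\,ds\Bigr)^{1/2}
\]
(and likewise for the $\p_t\p_tw$ term coming from your slice inequality). But the right--hand side of the theorem gives \emph{no $s$--derivative of $w$}: you only have $\Norm{\p_tw}_p$ and $\Norm{\p_t\p_tw}_p$, i.e.\ the $L^p_s$--integrals of the slice norms, not their $L^{2p}_s$--integrals. There is no way to upgrade $L^p_s$ to $L^{2p}_s$ for $w$ without some $\p_sw$ information, so the Cauchy--Schwarz route cannot close for the $w$ factor. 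You noticed this obstruction tacitly --- you only discuss how to handle the $v$--integral --- but the same obstruction for $w$ is fatal as written.

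The repair is precisely the asymmetric H\"older split the paper uses. On each slice take the asymmetric form
\[
   \Norm{\p_tv(s,\cdot)\,\p_tw(s,\cdot)}_{L^p(S^1)}
   \le \Norm{\p_tv(s,\cdot)}_{L^p(S^1)}
       \,\Norm{\p_tw(s,\cdot)}_{L^\infty(S^1)},
\]
raise to the $p$--th power, and then in $s$ use H\"older with exponents $(\infty,1)$ rather than $(2,2)$:
\[
   \int_{-T}^0 \Norm{\p_tv(s,\cdot)}_{L^p}^{p}
               \Norm{\p_tw(s,\cdot)}_{L^\infty}^{p}\,ds
   \le \Bigl(\sup_{s}\Norm{\p_tv(s,\cdot)}_{L^p(S^1)}\Bigr)^{p}
       \int_{-T}^0 \Norm{\p_tw(s,\cdot)}_{L^\infty(S^1)}^{p}\,ds.
\]
Now the $w$--integral is handled by the Sobolev embedding $W^{1,p}(S^1)\hookrightarrow L^\infty(S^1)$ slice by slice, producing exactly $\Norm{\p_tw}_p+\Norm{\p_t\p_tw}_p$ after integration. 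The $v$--factor requires $\sup_s\Norm{\p_tv(s,\cdot)}_{L^p(S^1)}$, which is obtained by your own proposed energy argument: differentiate $\int_{S^1}|\p_tv|^p\,dt$ in $s$, integrate by parts in $t$, and use Young's inequality together with the interpolation $\Norm{\p_tv(s,\cdot)}_{L^p(S^1)}\le \kappa_p\bigl(\delta^{-1}\Norm{v(s,\cdot)}_{L^p}+\delta\Norm{\p_t\p_tv(s,\cdot)}_{L^p}\bigr)$. This bounds the $s$--derivative of the slice energy by the $\Ww^{1,p}$--density, and integrating from the compact support of $v$ gives the $\sup$. That is the paper's Step~2; your description of it was essentially right, you just paired it with the wrong H\"older exponents in $s$.
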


The second result
is a parabolic analogue of the Weyl lemma 
in the theory of elliptic partial
differential equations.
This seems to be folklore, known
to experts but hidden --
if not nonexistent -- in the literature.
By $\HH^-$ we denote
the closed lower half plane,
the set of all reals $(s,t)$ such that
$s\le0$ and $t\in\R$.

\begin{lemma}[Parabolic Weyl lemma]
\label{le:parabolic-weyl}
Let $\Omega\subset\HH^-$ be an open subset.
If $u\in L^1_{loc}(\Omega)$ satisfies
\begin{equation}\label{eq:weak-par}
     \int_\Omega u
     \left(-\p_s\phi-\p_t\p_t\phi\right)
     =0
\end{equation}
for every $\phi\in C_0^\infty(\INT\,\Omega)$,
then $u\in C^\infty(\Omega)$
and $\p_su-\p_t\p_tu=0$ on $\Omega$.
\end{lemma}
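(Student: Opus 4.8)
The strategy is the classical mollification argument adapted to the heat operator $L=\p_s-\p_t\p_t$, whose formal adjoint is $L^*=-\p_s-\p_t\p_t$; condition \eqref{eq:weak-par} says exactly that $u$ is a distributional solution of $Lu=0$ on $\INT\,\Omega$. The first and main step is to produce a smooth fundamental solution for $L$ and to show that convolving $u$ with a mollifier yields a smooth function that still solves the equation in the interior. Concretely, fix the heat kernel $K(s,t)=(4\pi s)^{-1/2}\exp(-t^2/4s)$ for $s>0$ (and $K\equiv 0$ for $s\le 0$), which satisfies $LK=0$ away from the origin and $LK=\delta_0$ distributionally. For a standard mollifier $\rho_\eps$ supported in a small parabolic ball, set $u_\eps=u*\rho_\eps$ on the slightly shrunk open set $\Omega_\eps=\{z\in\INT\,\Omega:\dist(z,\p\Omega)>\eps\}$. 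Because differentiation commutes with convolution and $Lu=0$ weakly, one gets $Lu_\eps=0$ pointwise on $\Omega_\eps$, and $u_\eps\in C^\infty$.

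The second step is the regularity upgrade: I want to show $u$ itself is smooth, i.e.\ that $u_\eps$ is locally independent of $\eps$, or converges to something smooth. The cleanest route is an interior mean-value / representation formula. Since $K$ is not symmetric in $s$ one must be slightly careful: choose a test function $\phi$ obtained by cutting off $K(s_0-s,t_0-t)$ near a base point $(s_0,t_0)$ with $(s_0,t_0)$ above the support, so that the singularity at $(s_0,t_0)$ is approached from the parabolically "earlier" side where $K$ is smooth. Plugging a regularized version of this $\phi$ into \eqref{eq:weak-par} and letting the regularization parameter go to zero expresses $u(s_0,t_0)$ (more precisely, the value of the smooth mollification $u_\eps$) as an integral of $u$ against smooth kernels supported away from $(s_0,t_0)$; differentiating under the integral sign then shows this value depends smoothly on $(s_0,t_0)$ and is independent of $\eps$ in a neighborhood. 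Hence $u\in C^\infty(\INT\,\Omega)$ and, since $Lu_\eps=0$, also $Lu=0$ on $\INT\,\Omega$.

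The third step handles the boundary portion of $\Omega$, i.e.\ points of $\Omega$ lying on the line $s=0$. Here I cannot mollify symmetrically because $\Omega$ only extends to one side, but the parabolic operator $L$ is first order in $s$, so the heat equation is solvable "backward into" $\INT\,\Omega$: any smooth solution on $\INT\,\Omega$ whose data extend continuously to $\{s=0\}\cap\Omega$ automatically extends smoothly. More carefully, I fix a point $(0,t_0)\in\Omega$, pick a small rectangle $R=(-\delta,0]\times(t_0-\delta,t_0+\delta)\subset\Omega$, solve the heat equation on $(-\delta,\delta)\times(t_0-\delta,t_0+\delta)$ forward in $s$ with initial datum at $s=-\delta$ equal to the (already smooth) trace of $u$ there and with suitable boundary conditions in $t$, obtaining a smooth $\tilde u$; then $u-\tilde u$ is a weak solution of $L(u-\tilde u)=0$ vanishing weakly near $\{s=-\delta\}$, and a uniqueness argument for \eqref{eq:weak-par} (integrate against well-chosen test functions, or use the interior regularity already proven plus backward uniqueness) forces $u=\tilde u$ on $R$. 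This gives smoothness up to and including the boundary line, completing the proof.

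**Main obstacle.** The genuinely delicate point is the asymmetry and lack of two-sided support in the $s$-direction: for the elliptic Weyl lemma one convolves freely in all directions and the fundamental solution is symmetric, whereas here $K$ vanishes for $s\le 0$, so mollifying in $s$ mixes values of $u$ only from one side, and near $\{s=0\}$ that side may leave $\Omega$. Making the representation-formula argument (Step 2) robust to this one-sidedness — choosing the base point and the cutoff of the heat kernel so that all singular contributions are legitimately approached from within the domain — and then pushing regularity to the boundary (Step 3) via a backward-uniqueness argument for the distributional equation is where the real work lies; the interior smoothness of the mollifications in Step 1 is routine.
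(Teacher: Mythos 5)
Your overall strategy (mollify, show each mollification solves the homogeneous heat equation, upgrade to smoothness of $u$, then handle the boundary portion $\{s=0\}\cap\Omega$) is the natural first attempt, and it is in fact the one the paper explicitly discusses and rejects in the subsection on the heat-ball approach. The paper's actual proof differs in one crucial way that you miss, and your Steps~2 and~3 have genuine gaps.

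The paper's key device is to mollify \emph{only in the $t$-variable}: for a.e.\ fixed $s$ one sets $u_\eps(s,\cdot)=\rho_\eps * u(s,\cdot)$ with a 1-dimensional mollifier $\rho_\eps$ supported in $(-\eps,\eps)\subset\R$. Because $S^1$-slices (resp.\ $\R$-slices) have no boundary, this $u_\eps$ is well defined and smooth in $t$ for every $s\le 0$ including $s=0$, and one then uses the weak equation to show $\p_su_\eps=\p_t^{2}u_\eps$ and, by iteration, that $u_\eps$ is smooth and solves the heat equation. The paper then invokes the $L^1$-based interior $C^k$ estimates for smooth solutions (Evans, \S2.3 Thm.~9), which are formulated on closed parabolic cylinders $P_r(\sigma,\tau)=[\sigma-r^2,\sigma]\times[\tau-r,\tau+r]$ and thus reach all the way to the top boundary $\{s=0\}$; combined with Young's inequality $\norm{u_\eps}_{L^1}\le\norm{u}_{L^1}$ this gives $\eps$-uniform $C^k$ bounds, and Arzel\`a--Ascoli finishes the argument. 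No extra integrability beyond $L^1_{\mathrm{loc}}$ is needed and the boundary $\{s=0\}$ is handled automatically.

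Your Step~2 replaces this by a representation formula obtained from a cut-off of the heat kernel $K$, and you assert that this shows the mollifications are "independent of $\eps$ in a neighborhood." This is precisely where the parabolic case diverges from the Laplacian: the parabolic mean-value (Fulks/Watson) kernel carries the unbounded weight $(t-\tau)^2/(s-\sigma)^2$, so the analogue of "mollifying a harmonic function by a radial bump gives it back" fails, and one cannot pass to the limit without an integrability hypothesis stronger than $L^1_{\mathrm{loc}}$. The paper makes this quantitative: the heat-ball route requires $u\in L^q_{\mathrm{loc}}$ with $q>3$. Your proposal offers no substitute for that hypothesis, so as stated Step~2 does not prove the lemma under the assumption $u\in L^1_{\mathrm{loc}}$.

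Your Step~3 is also incomplete. Solving the heat equation on $(-\delta,\delta)\times(t_0-\delta,t_0+\delta)$ "forward in $s$ with initial datum at $s=-\delta$ and with suitable boundary conditions in $t$" does not determine $\tilde u$ unless you specify the lateral data, and whatever you choose, the comparison $u=\tilde u$ cannot be concluded from \eqref{eq:weak-par} alone: the test functions are required to vanish near $\{s=0\}$, so you have no control there without already knowing that $u$ is continuous up to $s=0$, which is what you are trying to prove. "Backward uniqueness" for the heat equation is also the wrong tool here (that would propagate information from large $s$ to small $s$, not help at the terminal face). The paper sidesteps all of this precisely because its one-dimensional mollification and the $L^1\!\to C^k$ interior estimates are valid on the closed top-slice, so there is no separate boundary case at all.
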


\begin{notation}
For $T>T^\prime>0$ abbreviate
\begin{equation}\label{eq:Z}
     Z=Z_T=(-T,0]\times S^1,\qquad
     Z^\prime=Z_{T^\prime}=(-T^\prime,0]\times S^1.
\end{equation}
To simplify notation
we denote the anisotropic
Sobolev spaces $W^{k,2k}_p$
by $\Ww^{k,p}$.
More precisely,
fix an integer $k\ge0$, a constant $p\ge1$,
and the domain $\R\times S^1$.
Now set $\Ww^{0,p}=L^p$
and denote by $\Ww^{1,p}$
the set of all $u\in L^p$
which admit weak derivatives
$\p_su$, $\p_tu$, and $\p_t\p_tu$ in $L^p$.
For $k\ge2$ define
$$
     \Ww^{k,p}=\{ u\in \Ww^{1,p}\mid
     \p_su,\p_tu,\p_t\p_tu\in\Ww^{k-1,p}\}
$$
where the derivatives are again meant in
the weak sense.
The associated norm
$$
     \Norm{u}_{\Ww^{k,p}}
     =\left(
     \int_{\R\times S^1}
     \sum_{2\nu+\mu\le 2k}
     \Abs{\p_s^\nu\p_t^\mu u}^p
     \right)^{1/p}
$$
gives $\Ww^{k,p}$ the structure
of a Banach space.
Note the difference to
(standard) Sobolev space $W^{k,p}$
with norm
$\Norm{u}_{\Ww^{k,p}}^p
=\int_{\R\times S^1}\sum_{\nu+\mu\le k}
\Abs{\p_s^\nu\p_t^\mu u}^p$.
\end{notation}

The parabolic Weyl lemma is the key ingredient
to prove part~a) of the next theorem.
The proof of part~b) is based on
theorem~\ref{thm:kerD-CalZyg}
the parabolic analogue
of the Calderon-Zygmund inequality.

\begin{theorem}[Interior regularity]
\label{thm:interior-regularity}
Fix constants $1<q<\infty$ and $T>0$
and an integer $k\ge 0$.
Then the following is true.
\begin{itemize}

\item[\rm a)]
If $u\in L^1_{loc}(Z)$
and $f\in\Ww^{k,q}_{loc}(Z)$ satisfy
\begin{equation}\label{eq:weak-solution}
     \int_{Z}
     u\left(-\p_s\phi-\p_t\p_t\phi\right)
     =\int_{Z} f\phi
\end{equation}
for every $\phi\in C_0^\infty((-T,0)\times S^1)$,
then $u\in\Ww^{k+1,q}_{loc}(Z)$.

\item[\rm b)]
For every
$0<T^\prime<T$
there is a constant
$c=c(k,q,T-T^\prime)$ such that
\begin{equation*}
     \Norm{u}_{\Ww^{k+1,q}(Z^\prime)}
     \le c\left(
     \Norm{\p_su-\p_t\p_tu}_{\Ww^{k,q}(Z)}
     +\Norm{u}_{L^q(Z)}
     \right)
\end{equation*}
for every $u\in C^\infty(\overline{Z})$.
\end{itemize}
\end{theorem}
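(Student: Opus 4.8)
The plan is to prove part a) by a parabolic bootstrap argument, using the parabolic Weyl lemma (Lemma 1.2) to start the induction and the parabolic Calderón--Zygmund estimate (Theorem on $\ker D$, invoked in the excerpt) to run it, and then to derive part b) from the a priori bound contained in that Calderón--Zygmund estimate together with a cutoff/difference-quotient argument. First I would treat the base case $k=0$: given $u\in L^1_{loc}(Z)$ and $f\in L^q_{loc}(Z)$ satisfying \eqref{eq:weak-solution}, I would mollify. Fix a relatively open subdomain $\Omega\Subset Z$ and a standard mollifier $\rho_\eps$; the convolutions $u_\eps=u*\rho_\eps$ and $f_\eps=f*\rho_\eps$ are smooth on a slightly smaller domain and satisfy $\p_su_\eps-\p_t\p_tu_\eps=f_\eps$ pointwise (here one uses that \eqref{eq:weak-solution} tested against $\rho_\eps(\cdot-z)$ reproduces the mollified equation, and that $C_0^\infty((-T,0)\times S^1)$ exhausts the relevant interior test space). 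The subtlety is that the backward heat operator is only hypoelliptic forward in $s$, so I must be careful to mollify only in directions keeping me inside $Z$; since $\Omega\Subset Z$ and the closure of $Z$ includes the top slice $s=0$ only as a boundary, this is fine as long as $\Omega$ stays at positive distance from $s=0$ and from $s=-T$.

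Next I would localize: choose $\chi\in C_0^\infty(\INT Z)$ with $\chi\equiv1$ on $\Omega$. Then $v_\eps=\chi u_\eps$ is compactly supported and satisfies $\p_sv_\eps-\p_t\p_tv_\eps=\chi f_\eps+(\p_s\chi)u_\eps-2(\p_t\chi)(\p_tu_\eps)-(\p_t\p_t\chi)u_\eps=:g_\eps$. The parabolic Calderón--Zygmund inequality applied to $v_\eps$ gives $\Norm{v_\eps}_{\Ww^{1,q}}\le c\Norm{g_\eps}_{L^q}$, and the right-hand side is controlled by $\Norm{f}_{L^q(Z)}$ plus lower-order terms in $u$ on $\supp\chi$; the terms involving $\p_tu_\eps$ must be absorbed by an interpolation/iteration over nested cutoffs, or alternatively one first upgrades $u$ to $W^{0,1}_q$-type control using Lemma 1.2 to know $u$ is already smooth where the forcing is. In fact, cleanly: apply Lemma 1.2 after subtracting a particular solution. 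Let $w$ solve $\p_sw-\p_t\p_tw=f$ locally with $w\in\Ww^{1,q}_{loc}$ (this is exactly what the Calderón--Zygmund theory provides for the mollified/cutoff problem, then pass to the limit using the uniform $\Ww^{1,q}$ bounds and weak compactness). Then $u-w$ is weakly caloric in the sense of \eqref{eq:weak-par}, hence smooth by Lemma 1.2, so $u=w+\text{(smooth)}\in\Ww^{1,q}_{loc}(Z)$. This settles $k=0$.

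For the inductive step, suppose the claim holds for $k-1$ and let $u,f$ be as in the statement with $f\in\Ww^{k,q}_{loc}$. By the case $k-1$ we already know $u\in\Ww^{k,q}_{loc}$. Now differentiate the equation: for a multi-index giving $\p_s$ or $\p_t$, the derivative $\p u$ satisfies the weak equation \eqref{eq:weak-solution} with forcing $\p f\in\Ww^{k-1,q}_{loc}$ (integrate by parts in the weak formulation against $\p\phi$; since $u\in\Ww^{k,q}_{loc}$ all manipulations are legitimate on the interior). Applying the induction hypothesis to each such $\p u$ yields $\p u\in\Ww^{k,q}_{loc}$, and combining over all first derivatives $\p_su,\p_tu,\p_t\p_tu$ gives $u\in\Ww^{k+1,q}_{loc}(Z)$ by the very definition of $\Ww^{k+1,q}$ in the Notation. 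For part b), I would run the same cutoff computation quantitatively: given $u\in C^\infty(\overline Z)$ and $0<T'<T$, pick $\chi\in C_0^\infty((-T,0)\times S^1)$ with $\chi\equiv1$ on a neighborhood of $\overline{Z'}$ and $\abs{\p^\alpha\chi}\le c(T-T')$, apply the parabolic Calderón--Zygmund inequality to $\chi u$, and absorb the commutator terms (which involve only derivatives of $u$ of order $\le2k+1$ on $Z$, times derivatives of $\chi$) by a finite iteration over a chain of nested cutoffs $T'=T_0<T_1<\dots<T_N=T$, using interpolation inequalities of the form $\Norm{\p^j u}_{L^q}\le\eps\Norm{u}_{\Ww^{k+1,q}}+C_\eps\Norm{u}_{L^q}$ on each annular region; choosing $\eps$ small absorbs the top-order pieces into the left side. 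The main obstacle I anticipate is precisely this absorption in part b): keeping track of the anisotropic scaling ($\p_s$ counts as two $\p_t$'s) so that the interpolation inequalities and the cutoff derivative bounds are applied with the correct weights, and verifying that the commutator of the backward heat operator with multiplication by $\chi$ only produces terms of net order $\le2k+1$ that are genuinely lower order relative to $\Ww^{k+1,q}$ — this is routine but must be done with the anisotropic Sobolev scale, not the isotropic one.
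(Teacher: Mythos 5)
Your plan is sound and succeeds, but in a few places it runs along a genuinely different track from the paper, so a comparison is worthwhile. The paper factors the result through a purely local version on parabolic rectangles $\Omega_r=(-r^2,0]\times(-r,r)\subset\HH^-$ (Theorems~\ref{thm:local-regularity} and~\ref{thm:interior-estimate}) and then deduces the cylinder statement by covering; you instead work on the cylinder directly with cutoffs, which amounts to the same thing. For part a) the paper does \emph{not} induct on $k$: it approximates the cutoff forcing $\beta f$ in $\Ww^{k,q}$ by smooth compactly supported $f_i$, solves $(\p_s-\p_t\p_t)u_i=f_i$ exactly, and shows the $u_i$ form a Cauchy sequence in $\Ww^{k+1,q}$ in one shot (using CZ applied to $\p_s^k(u_i-u_j)$ together with Poincar\'e's inequality to recover the full anisotropic norm), then subtracts the limit $\hat u$ and invokes the parabolic Weyl lemma on $u-\hat u$. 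Your version performs the same ``subtract a particular solution, then Weyl'' step only at $k=0$ and then inducts by differentiating the weak equation; this is correct, and the commutator bookkeeping you describe works precisely because $\p_t\p_t$ costs only one unit in the $\Ww^{k,q}$ grading. For part b) the paper again avoids interpolation: after cutting off, it uses Poincar\'e's inequality (Lemma~\ref{le:poincare-ineq}, a one-dimensional fundamental-theorem-of-calculus estimate on $(-r,r)$) to control the low-order parts of $\norm{\beta u}_{\Ww^{1,q}}$ by $\norm{\p_t\p_t(\beta u)}_q$, then CZ, and then runs an induction over $k$ on a chain of nested rectangles. This is more elementary and avoids any appeal to anisotropic Gagliardo--Nirenberg, which is the part of your route that would require the most care to pin down. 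Two small cautions on your write-up: first, the CZ inequality of Theorem~\ref{thm:kerD-CalZyg} controls only $\norm{\p_s v}_q+\norm{\p_t\p_t v}_q$, so the bound ``$\norm{v_\eps}_{\Ww^{1,q}}\le c\norm{g_\eps}_q$'' you write down already needs Poincar\'e (or adding $\norm{v_\eps}_q$ to the right side) to be literally true; second, the paper's interior estimate naturally produces an extra $\norm{\p_t u}_{L^q}$ on the right, which then has to be absorbed — your interpolation scheme would sidestep that, which is a modest advantage of your route if the anisotropic interpolation inequalities are available.
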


As an application of
theorem~\ref{thm:product-estimate}
and theorem~\ref{thm:interior-regularity},
hence lemma~\ref{le:parabolic-weyl},
we prove the following
regularity result by
parabolic bootstrapping.

\begin{theorem}[Regularity]
\label{thm:bootstrap}
Fix constants $p>2$, $\mu_0>1$, and $T>0$.
Fix a closed smooth submanifold
$M\hookrightarrow\R^N$ and a
smooth family of vector-valued
symmetric bilinear forms
$\Gamma:M\to\R^{N\times N\times N}$.
Assume that $F:Z\to\R^N$ is a map of class $L^p$
and $u:Z\to\R^N$ is a $\Ww^{1,p}$
map taking values in $M$
with $\norm{u}_{\Ww^{1,p}}\le\mu_0$
such that the perturbed heat equation
\begin{equation}\label{eq:heat-local-F-prime}
     \p_su-\p_t\p_tu
     =\Gamma(u)\left(\p_tu,\p_tu\right)
     +F
\end{equation}
is satisfied almost everywhere.
Then the following is true.
For every integer $k\ge 1$
such that $F\in\Ww^{k,p}(Z)$
and every $T^\prime\in(0,T)$
there is a constant $c_k$ depending on
$k$, $p$, $\mu_0$, $T-T^\prime$,
$\norm{\Gamma}_{C^{2k+2}}$, and
$\norm{F}_{\Ww^{k,p}(Z)}$
such that
$$
     \Norm{u}_{\Ww^{k+1,p}(Z^\prime)}
     \le c_k.
$$
\end{theorem}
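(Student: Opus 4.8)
The plan is to run a parabolic bootstrapping argument, using Theorem~\ref{thm:interior-regularity}a) to gain one anisotropic derivative at each stage and Theorem~\ref{thm:product-estimate} to control the quadratic nonlinearity in the norm one level below the gain. I would argue by induction on $k$. The base case $k=1$ already requires work: starting from $u\in\Ww^{1,p}$ taking values in $M$, one must show the right-hand side of \eqref{eq:heat-local-F-prime} lies in $L^p_{loc}$ so that Theorem~\ref{thm:interior-regularity}a) applies (with $k=0$, $q=p$) and yields $u\in\Ww^{2,p}_{loc}(Z)$; then by a covering/cutoff argument and part~b) of that theorem one upgrades this to the quantitative estimate on $Z^\prime$. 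The key point here is that $\Gamma(u)(\p_tu,\p_tu)$ is \emph{a priori} only in $L^{p/2}$ by naive H\"older, which is not enough; this is precisely where the product estimate enters, but at the base level one needs a slightly different input since one does not yet have $\p_t\p_tu\in L^p$. I would instead observe that at the base case one can appeal to the fact (from standard parabolic theory, or by first bootstrapping with a weaker exponent) that $u\in\Ww^{1,p}$ with $p>2$ embeds into a H\"older space in the $t$-variable on slices, so $\Gamma(u)$ is bounded and $\p_tu\in L^p$ gives $|\p_tu|^2\in L^{p/2}\hookrightarrow L^1_{loc}$; applying Theorem~\ref{thm:interior-regularity}a) with $q=p/2$ first produces $u\in\Ww^{2,p/2}_{loc}$, hence $\p_t\p_tu\in L^{p/2}_{loc}$, and iterating this finitely many times with the anisotropic Sobolev embedding $\Ww^{1,q}\hookrightarrow L^{q'}$ raises the integrability exponent past $p$.

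Once $u\in\Ww^{2,p}_{loc}(Z)$ is known, the inductive step becomes clean. Assume $u\in\Ww^{k+1,p}_{loc}$ with the corresponding quantitative bound on an intermediate domain, and that $F\in\Ww^{k+1,p}$. The nonlinearity $\Gamma(u)(\p_tu,\p_tu)$ must be shown to lie in $\Ww^{k,p}_{loc}$. Differentiating the product $\Gamma(u)(\p_tu,\p_tu)$ up to anisotropic order $k$ via the Leibniz rule, each resulting term is a product of a derivative of $\Gamma(u)$ (controlled by $\norm{\Gamma}_{C^{2k+2}}$ and the chain rule, hence by $\Ww^{k,p}$ norms of $u$, using that $\Ww^{k,p}$ is a Banach algebra for $k$ large enough and otherwise using Sobolev embeddings to place the lower-order factors in $L^\infty$ or high-$L^r$) times two derivatives of $u$ of total anisotropic order $\le k$. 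The worst term is the one with a single $\p_t$-derivative on each factor at the top order; this is exactly the term Theorem~\ref{thm:product-estimate} is designed to estimate, bounding $\||\p_t a|\,|\p_t b|\|_p$ by $(\norm{a}_p+\norm{\p_s a}_p+\norm{\p_t\p_t a}_p)(\norm{\p_t b}_p+\norm{\p_t\p_t b}_p)$, i.e. by $\Ww^{1,p}$-type norms of $a$ and $b$, which here are derivatives of $u$ of order $\le k$, hence controlled by $\norm{u}_{\Ww^{k+1,p}}$ and thus by the inductive constant. So $\Gamma(u)(\p_tu,\p_tu)+F\in\Ww^{k,p}_{loc}(Z)$ with a norm bounded in terms of $c_k$, $\norm{\Gamma}_{C^{2k+2}}$, $\norm{F}_{\Ww^{k,p}}$; by \eqref{eq:heat-local-F-prime} this is $\p_su-\p_t\p_tu$, so Theorem~\ref{thm:interior-regularity}a) gives $u\in\Ww^{k+1,p}_{loc}(Z)$.

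To get the quantitative estimate of the theorem, rather than just local regularity, I would run the same argument on a nested sequence of shrinking cylinders $Z=Z_0\supset Z_1\supset\cdots\supset Z_{k}\supset Z^\prime$ with $Z_j=(-T_j,0]\times S^1$ and $T>T_0>\cdots>T_{k}>T^\prime$, applying Theorem~\ref{thm:interior-regularity}b) at each step to convert the regularity gain into a norm bound: at stage $j$ one obtains $\norm{u}_{\Ww^{j+1,p}(Z_j)}\le c(j,p,T_{j-1}-T_j)\big(\norm{\Gamma(u)(\p_tu,\p_tu)+F}_{\Ww^{j-1,p}(Z_{j-1})}+\norm{u}_{L^p(Z_{j-1})}\big)$, and the first term on the right is estimated as above in terms of $\norm{u}_{\Ww^{j,p}(Z_{j-1})}$ (quadratically, via the product estimate) and $\norm{F}_{\Ww^{j-1,p}}$; a mild technical point is that part~b) is stated for $u\in C^\infty(\overline{Z})$, so one first mollifies or, cleanly, runs part~b) on the already-smooth restriction of $u$ to the interior obtained from part~a) plus the Weyl lemma / Schauder-type interior smoothness, absorbing the small loss by a further shrinkage. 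Composing the $k$ estimates, one arrives at $\norm{u}_{\Ww^{k+1,p}(Z^\prime)}\le c_k$ with $c_k$ a function of the stated quantities. The main obstacle I anticipate is not the inductive step — there the product estimate does exactly its job — but the base case: getting from the mere hypothesis $u\in\Ww^{1,p}$ (which only gives the nonlinearity in $L^{p/2}$, below the threshold $p>2$ where the product estimate becomes usable with $\Ww^{1,p}$ data) up to $u\in\Ww^{2,p}_{loc}$, which requires a preliminary finite iteration raising the Lebesgue exponent through the anisotropic Sobolev embeddings before the main scheme can start.
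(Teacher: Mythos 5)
Your overall shape — a preliminary iteration to raise integrability, then a clean induction combining Theorem~\ref{thm:interior-regularity} with a product estimate, then nested shrinking cylinders for the quantitative bound — is the same as the paper's. However, the base case of your argument rests on a misreading of the hypothesis, and the substitute iteration you propose does not run.

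Recall that $\Ww^{1,p}$ in this paper is the \emph{anisotropic} space $W^{1,2}_p$: by definition $u\in\Ww^{1,p}$ already means $u,\p_su,\p_tu,\p_t\p_tu\in L^p$. Your claim that ``at the base level one does not yet have $\p_t\p_tu\in L^p$'' is therefore false; you appear to have read $\Ww^{1,p}$ as the standard $W^{1,p}$. Because of this, Theorem~\ref{thm:product-estimate} applies directly with $v=w=u$ and immediately gives $\Gamma(u)(\p_tu,\p_tu)\in L^p$ — not merely $L^{p/2}$ — so one application of Theorem~\ref{thm:interior-regularity}~a) with $q=p$ already yields $u\in\Ww^{2,p}_{loc}(Z)$. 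The exponent-raising iteration you propose instead is unnecessary for this purpose, and also wrong as stated: with $k=0$, $q=p/2$, Theorem~\ref{thm:interior-regularity}~a) produces $u\in\Ww^{1,p/2}_{loc}$, not $\Ww^{2,p/2}_{loc}$, which is \emph{weaker} than the hypothesis $u\in\Ww^{1,p}$, so the first step of your iteration gains nothing.

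The iteration that \emph{is} genuinely required (and this is where the paper's Lemma~\ref{le:bootstrap} invests its effort) addresses a difficulty your proposal does not identify: once the equation is differentiated in $t$, the cubic term $d\Gamma(u)(\p_tu,\p_tu,\p_tu)$ appears, and Theorem~\ref{thm:product-estimate} can absorb only two of the three factors. The third $\p_tu$ is placed in $L^q$, giving $\p_th\in L^\chi$ with $1/\chi=1/p+1/q$; starting from $q=p$ one has $\chi=p/2>1$, and the iteration raises $q$ via the embedding $\Ww^{1,\chi}\hookrightarrow L^{2\chi/(2-\chi)}$ until $\chi>2$, whence $\p_tu\in C^0$. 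Once $\p_tu$ (and inductively its higher derivatives, in the $\Cc^\ell$ sense) is bounded, the paper's Proposition~\ref{prop:bootstrap} pulls out sup norms and treats the equation essentially linearly, using the elementary product lemma~\ref{le:product-Sobolev} / corollary~\ref{cor:product-Sobolev} rather than Theorem~\ref{thm:product-estimate}. Your inductive step, which leans on the full product estimate plus a Leibniz/Banach-algebra argument, is plausible in outline but glosses over precisely the cubic and higher chain-rule terms at low $k$, where the $\Cc^\ell$ control from Lemma~\ref{le:bootstrap} is what makes the bookkeeping close. Your device of nested cylinders for the quantitative estimate does match the paper's $Z_\ell$ construction.
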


The theorem shows that if
$F$ is smooth, then $u$ is smooth
on a slightly smaller domain.
This result is needed in~\cite{Joa-HEATFLOW}
in the case
$F(s,t)=(\grad\,\Vv(u(s,\cdot))(t)$
where $\Vv$ is a smooth function
on the free loop space of $M$.

\vspace{.1cm}
\noindent
{\bf Acknowledgements.} For valuable
comments and discussions the author
would like to thank
K.~Ecker,
T.~Ilmanen,
K.~Mohnke,
J.~Naumann,
D.~Salamon,
and M.~Struwe.
Partial financial support from
SFB~647 is gratefully acknowledged.

%
%
%
%

\section{The product estimate}
\label{sec:product-estimate}

We prove a version
of theorem~\ref{thm:product-estimate}
suitable for global analysis.

\begin{proposition}
\label{prop:product-estimate}
Let $N$ be a Riemannian manifold
with Levi-Civita connection $\nabla$
and Riemannian curvature tensor $R$.
Fix constants $2\le p <\infty$
and $c_0>0$.
Then there is a constant
$C=C(p,c_0,\norm{R}_\infty)>0$
such that
the following holds.
If $u:(a,b]\times S^1\to N$
is a smooth map such that
$
     \Norm{\p_su}_\infty+\Norm{\p_tu}_\infty
     \le c_0
$
then
$$
     \left(\int_a^b\int_0^1
     \left(\Abs{\Nabla{t}\xi}
     \Abs{\Nabla{t}X}\right)^p\, dtds 
     \right)^{1/p}
     \le C\Norm{\xi}_{\Ww^{1,p}}
     \left(\Norm{\Nabla{t} X}_p
     +\Norm{\Nabla{t}\Nabla{t} X}_p\right)
$$
for all smooth
compactly supported
vector fields
$\xi$ and $X$ along $u$.
\end{proposition}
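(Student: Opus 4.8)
The plan is to reduce Proposition~\ref{prop:product-estimate} to Theorem~\ref{thm:product-estimate} by working in local coordinates, or rather by treating the covariant derivatives as ordinary derivatives plus bounded zeroth-order correction terms. Since the target manifold $N$ enters only through $\nabla$ and $R$, and since $\xi,X$ are vector fields along a single map $u$, the natural move is to trivialize: cover the (compact, after noting the compact support) image of $u$ by finitely many coordinate charts, choose a subordinate partition of unity pulled back to $(a,b]\times S^1$, and estimate on each piece. On a chart, $\Nabla{t}X = \p_tX + \Gamma(u)(\p_tu,X)$ where $\Gamma$ are the Christoffel symbols; the bound $\Norm{\p_tu}_\infty\le c_0$ together with compactness of the relevant chart region makes the correction term $\Gamma(u)(\p_tu,X)$ pointwise bounded by a constant times $\Abs{X}$.

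The key steps, in order. First, localize via a finite atlas and partition of unity, so that it suffices to prove the estimate for $\R^k$-valued $\xi,X$ with the covariant derivatives replaced by $\p_t$ plus connection terms, all cut off by a fixed smooth bump function. Second, expand $\Nabla{t}\xi$ and $\Nabla{t}X$ in coordinates and observe $\Abs{\Nabla{t}\xi}\le \Abs{\p_t(\chi\xi)} + C\Abs{\chi\xi}$ (and similarly for $X$), where $\chi$ is the cutoff; by the triangle inequality in the $L^p$ norm of the product and the elementary pointwise bound $(\Abs{a}+\Abs{b})(\Abs{c}+\Abs{d})\le \Abs{a}\Abs{c}+\Abs{a}\Abs{d}+\Abs{b}\Abs{c}+\Abs{b}\Abs{d}$, split the left-hand side into four pieces. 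The main piece $\Norm{\Abs{\p_t(\chi\xi)}\Abs{\p_t(\chi X)}}_p$ is handled by Theorem~\ref{thm:product-estimate} applied to $v=\chi\xi$, $w=\chi X$ (both compactly supported in $(-T,0]\times S^1$ after the obvious time reparametrization). The cross terms and the lower-order term are controlled by H\"older's inequality together with a Gagliardo--Nirenberg / Sobolev interpolation on the $t$-variable, $\Norm{\p_t g}_{2p}^2 \le c\Norm{g}_\infty(\Norm{\p_t g}_p+\Norm{\p_t\p_t g}_p) + \dots$, or more simply by absorbing them into $\Norm{\xi}_{\Ww^{1,p}}(\Norm{\Nabla{t}X}_p+\Norm{\Nabla{t}\Nabla{t}X}_p)$ after noting that each factor $\Norm{\p_t(\chi\xi)}_p$ etc.\ is already dominated by the corresponding $\Ww^{1,p}$ or $\Nabla{t}$-norm up to the bounded connection terms. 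Third, reassemble: the Christoffel terms contribute constants depending only on $p$, $c_0$, and $\Norm{R}_\infty$ (the curvature enters when one commutes $\Nabla{t}$ past the nonlinear terms in the higher-order estimates), and summing over the finitely many charts yields the claimed $C=C(p,c_0,\Norm{R}_\infty)$.

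The main obstacle I expect is bookkeeping rather than conceptual: one must verify that when the coordinate form of $\Nabla{t}\Nabla{t}X$ is expanded, the genuinely second-order part is exactly $\p_t\p_t$ of a compactly supported function to which Theorem~\ref{thm:product-estimate} applies, while every term carrying a factor of $\p_su$ or $\p_tu$ or $\nabla\Gamma$ (which is where curvature appears, via $R = d\Gamma + \Gamma\wedge\Gamma$) is genuinely lower order and can be absorbed using only the $C^0$ bound $c_0$ and interpolation. A secondary subtlety is that Theorem~\ref{thm:product-estimate} is stated with norms over $\R\times S^1$ (the full line), so one should first extend $\chi\xi,\chi X$ by zero and then apply the theorem; the compact support hypothesis on $\xi,X$ makes this harmless. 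I do not anticipate needing anything beyond Theorem~\ref{thm:product-estimate}, H\"older, and one-dimensional Sobolev interpolation in $t$.
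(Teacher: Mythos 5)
Your chart-and-cutoff reduction to Theorem~\ref{thm:product-estimate} does not close, and the obstruction is structural rather than a matter of bookkeeping. In a chart you have $\Nabla{t}X = \p_tX + \Gamma(u)(\p_tu,X)$, and after inserting a cutoff $\chi$ pulled back from $N$ you also pick up $\p_t(\chi X) = \chi\,\p_tX + (\p_t\chi)\,X$. Both the Christoffel correction and the cutoff derivative leave you with contributions to your right-hand side that scale like $\Norm{X}_p$ (or $\Norm{X}_\infty$). But the target estimate controls the $X$-factor only through $\Norm{\Nabla{t}X}_p + \Norm{\Nabla{t}\Nabla{t}X}_p$, and these do not bound $\Norm{X}_p$: a vector field along $u$ that is (nearly) parallel in $t$ has vanishing (small) $\Nabla{t}X$ and $\Nabla{t}\Nabla{t}X$ while $\Norm{X}_p$ is arbitrary. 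So the zero-order pieces you plan to ``absorb'' are not absorbable, and your argument would at best deliver the strictly weaker inequality with an extra $\Norm{X}_p$ on the right. Notice the asymmetry of the claimed estimate: the $\xi$-factor carries the full $\Ww^{1,p}$ norm, which contains $\Norm{\xi}_p$, so chart and cutoff corrections acting on $\xi$ are harmless --- only those acting on $X$ are fatal. There is also a logical wrinkle: in this paper Theorem~\ref{thm:product-estimate} is deduced \emph{from} Proposition~\ref{prop:product-estimate} by taking $N=\R^k$ and $u\equiv\const$, so invoking the theorem to prove the proposition is circular unless you first supply an independent flat-case proof.

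The paper's own proof never localizes on $N$ and is covariant from the outset; it is a mixed-norm argument. One shows via iterated H\"older (Step~1) that
\[
\bigl\|\,\Abs{\Nabla{t}\xi}\,\Abs{\Nabla{t}X}\,\bigr\|_p
\le \Norm{\Nabla{t}\xi}_{L^\infty(I;L^p(S^1))}\,\Norm{\Nabla{t}X}_{L^p(I;L^\infty(S^1))},
\]
then bounds the first factor by $\Norm{\xi}_{\Ww^{1,p}}$ through an energy inequality for $\tfrac{d}{ds}\int_0^1\Abs{\Nabla{t}\xi}^p\,dt$ that feeds in the commutator $[\Nabla{s},\Nabla{t}]\xi = R(\p_su,\p_tu)\xi$, the hypothesis $\Norm{\p_su}_\infty+\Norm{\p_tu}_\infty\le c_0$, and the compact support in $s$ (Step~2), and bounds the second factor by the slicewise embedding $W^{1,p}(S^1)\hookrightarrow L^\infty(S^1)$ applied to $\Nabla{t}X(s,\cdot)$ (Step~3). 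The $X$-factor is only ever differentiated, never estimated undifferentiated, which is exactly how the right-hand side ends up free of $\Norm{X}_p$.
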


\begin{remark}\label{rmk:product-estimate}
Proposition~\ref{prop:product-estimate}
continues to hold for smooth
maps $u$ that are defined on
the whole cylinder $\R\times S^1$.
In this case
the (compact) supports of
$\xi$ and $X$
are contained in an
interval of the
form $(a,b]$.
\end{remark}

\begin{lemma}[{\cite[lemma~D.4]{SaJoa-LOOP}}]
\label{le:plus-minus}
Let $x\in C^\infty(S^1,M)$
and $p>1$. Then
$$
     \Norm{\Nabla{t}\xi}_p
     \le \kappa_p\left(
     \delta^{-1}\Norm{\xi}_p
     +\delta\Norm{\Nabla{t}\Nabla{t}\xi}_p\right)
$$ 
for $\delta>0$ and smooth
vector fields
$\xi$ along $x$.
Here $\kappa_p$ equals 
$p/(p-1)$ for $p\le 2$
and it equals $p$ for $p\ge 2$.
\end{lemma}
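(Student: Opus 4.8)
The plan is to derive the inequality from a single elementary estimate, valid uniformly in $p$ and in $\delta>0$, obtained by Taylor expansion along the loop $x$. Fix $h>0$ and a point $t\in S^1$, and let $P_{t\to s}\colon T_{x(t)}M\to T_{x(s)}M$ denote parallel transport along $x$, with $s$ allowed to range beyond $t+1$ (the loop being traversed as often as needed). The curve $s\mapsto P_{t\to s}^{-1}\xi(s)$ takes values in the fixed vector space $T_{x(t)}M$ and, by the definition of $\Nabla{t}$ together with the cocycle property of parallel transport, has first and second derivatives $P_{t\to s}^{-1}\Nabla{t}\xi(s)$ and $P_{t\to s}^{-1}\Nabla{t}\Nabla{t}\xi(s)$. (Equivalently one may trivialize $x^*TM$ over $[0,1]$ by a parallel orthonormal frame and reduce everything to vector-valued functions with twisted periodicity.) Taylor's formula with integral remainder on $[t,t+h]$, solved for the derivative at $t$, then gives
\begin{equation*}
     \Nabla{t}\xi(t)
     =\frac1h\bigl(P_{t\to t+h}^{-1}\xi(t+h)-\xi(t)\bigr)
     -\frac1h\int_t^{t+h}(t+h-s)\,P_{t\to s}^{-1}\Nabla{t}\Nabla{t}\xi(s)\,ds ,
\end{equation*}
and since parallel transport is a linear isometry, taking pointwise norms and using $0\le t+h-s\le h$ yields
\begin{equation*}
     \Abs{\Nabla{t}\xi}(t)
     \le\frac1h\bigl(\Abs{\xi}(t+h)+\Abs{\xi}(t)\bigr)
     +\int_t^{t+h}\Abs{\Nabla{t}\Nabla{t}\xi}(s)\,ds ,
\end{equation*}
where the functions $\Abs{\xi}$ and $\Abs{\Nabla{t}\Nabla{t}\xi}$ on $S^1$ are evaluated at the arguments reduced modulo $1$.

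Next I would take the $L^p(S^1)$ norm in $t$. The two boundary terms each have $L^p$ norm $\tfrac1h\Norm{\xi}_p$, by translation invariance of the norm. The remainder term is the convolution on $S^1=\R/\Z$ of $\Abs{\Nabla{t}\Nabla{t}\xi}$ with the $1$-periodization of the indicator of the interval $(0,h)$, and that kernel has $L^1(S^1)$ norm $h$ for \emph{every} $h>0$; hence Young's convolution inequality bounds the remainder in $L^p(S^1)$ by $h\Norm{\Nabla{t}\Nabla{t}\xi}_p$. Altogether
\begin{equation*}
     \Norm{\Nabla{t}\xi}_p
     \le\frac2h\Norm{\xi}_p+h\Norm{\Nabla{t}\Nabla{t}\xi}_p
     \qquad\text{for every }h>0 .
\end{equation*}
Choosing $h=\delta$ and observing that $2\le\kappa_p$ for all $p>1$ — indeed $\kappa_p=p\ge2$ when $p\ge2$ and $\kappa_p=p/(p-1)\ge2$ when $1<p\le2$ — gives the claim, in fact with the $p$-independent constant $2$ in place of $\kappa_p$.

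I do not expect a serious obstacle in this route; the only points that need a careful word are the verification that $s\mapsto P_{t\to s}^{-1}\xi(s)$ has the stated covariant derivatives, and the value of the $L^1$ norm of the convolution kernel in the regime $h>1$, where the interval of integration wraps around the circle (which is exactly why one lets $s$ exceed $t+1$ in the construction). For completeness let me also record the more classical approach, which, however, only reaches $p\ge2$: integrating by parts on the closed manifold $S^1$ in $\int_{S^1}\Abs{\Nabla{t}\xi}^{p-2}\inner{\Nabla{t}\xi}{\Nabla{t}\xi}$, using the pointwise bound $\Abs{\Nabla{t}(\Abs{\Nabla{t}\xi}^{p-2}\Nabla{t}\xi)}\le(p-1)\Abs{\Nabla{t}\xi}^{p-2}\Abs{\Nabla{t}\Nabla{t}\xi}$ (no curvature enters, since only $\Nabla{t}$ along $x$ is involved), then Hölder's inequality with the exponent $p/(p-2)$ on the factor $\Abs{\Nabla{t}\xi}^{p-2}$, and finally Young's inequality $ab\le\tfrac14(\delta^{-1}a+\delta b)^2$; this yields the estimate for $p\ge2$ with a constant below $\kappa_p$. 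Since the exponent $p/(p-2)$ ceases to be admissible for $1<p<2$, this second route would need a separate duality argument there, and this is the real reason the two formulas for $\kappa_p$ appear — which is why I would present the Taylor argument, being indifferent to the value of $p$.
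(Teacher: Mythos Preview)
The paper does not supply its own proof of this lemma; it is simply quoted from \cite[lemma~D.4]{SaJoa-LOOP}. Your Taylor-expansion argument is correct and self-contained: the identification of the covariant derivatives of $s\mapsto P_{t\to s}^{-1}\xi(s)$ is the standard characterization of $\Nabla{t}$, parallel transport along $x$ is an isometry since the connection is Levi-Civita, and your handling of the convolution kernel on $S^1$ (including the regime $h>1$, where the periodized indicator indeed has $L^1$ mass exactly $h$) is careful and right. You in fact obtain the inequality with the uniform constant~$2$ in place of~$\kappa_p$, which is strictly stronger than the stated lemma since $\kappa_p\ge2$ for all $p>1$.

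The specific shape of $\kappa_p$ in the quoted statement --- $p$ for $p\ge2$ and its H\"older conjugate $p/(p-1)$ for $1<p\le2$ --- strongly suggests that the original argument in \cite{SaJoa-LOOP} follows your ``second route'': integration by parts and H\"older for $p\ge2$, then a duality argument to reach $1<p<2$. Your Taylor approach bypasses that case split entirely and yields the sharper constant, at the modest cost of invoking parallel transport explicitly; the integration-by-parts route stays closer to the spirit of the manipulations used elsewhere in the paper (compare Step~2 in the proof of Proposition~\ref{prop:product-estimate}). Either way, nothing is missing from your main argument.
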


\begin{proof}[Proof of
proposition~\ref{prop:product-estimate}]
The proof has three steps.
Step~2 requires $p\ge2$.
Abbreviate $I=(a,b]$ and
for $q,r\in[1,\infty]$
consider the norm
$$
     \Norm{\xi}_{q;r}
     :=\Norm{\xi}_{L^q(I,L^r(S^1))}.
$$

\vspace{.1cm}
\noindent
{\sc Step 1.}
{\it 
Fix reals $\alpha\ge1$ and
$q,r,q^\prime,r^\prime\in[\alpha,\infty]$
such that 
$\frac{1}{q}
+\frac{1}{r}
=\frac{1}{\alpha}$
and
$\frac{1}{q^\prime}
+\frac{1}{r^\prime}
=\frac{1}{\alpha}$.
Then
$$
     \Norm{fg}_\alpha
     \le \Norm{f}_{q^\prime;q}
     \Norm{g}_{r^\prime;r}
$$
for all functions
$f,g\in C^\infty(I\times S^1)$.
}

\vspace{.1cm}
\noindent
Let $f_s(t):=f(s,t)$.
Apply H\"older's inequality twice
to obtain
\begin{equation*}
\begin{split}
     \Norm{fg}_{L^\alpha(I\times S^1)}^\alpha
    &=\int_a^b
     \Norm{f_sg_s}_{L^\alpha(S^1)}^\alpha
     \, ds\\
    &\le\int_a^b
     \left(\Norm{f_s}_{L^q(S^1)}
     \Norm{g_s}_{L^r(S^1)}\right)^\alpha\, ds\\
    &=\Norm{uv}_{L^\alpha(I)}^\alpha\\
    &\le\left(\Norm{u}_{L^{q^\prime}(I)}
     \Norm{v}_{L^{r^\prime}(I)}\right)^\alpha
\end{split}
\end{equation*}
where $u(s):=\Norm{f_s}_{L^q(S^1)}$
and $v(s):=\Norm{g_s}_{L^r(S^1)}$.
This proves Step~1.

\vspace{.1cm}
\noindent
{\sc Step 2.}
{\it
Given $p,c_0$, and $u$
as in the hypothesis of the lemma.
Then there is a constant
$c=c(p,c_0)>0$ such that
$$
     \Norm{\Nabla{t}\xi}_{\infty;p} 
     \le c\Norm{\xi}_{\Ww^{1,p}}
$$
for every smooth compactly 
supported vector field
$\xi$ along $u:I\times S^1\to N$.
}

\vspace{.1cm}
\noindent
The proof uses the
generalized Young inequality:
Given reals $a,b,c\ge 0$ and
$1<\alpha,\beta,\gamma<\infty$
such that
$\frac{1}{\alpha}
+\frac{1}{\beta}
+\frac{1}{\gamma}
=1$, then
\begin{equation}\label{eq:generalized-Young}
     abc
     \le \frac{a^\alpha}{\alpha} 
     +\frac{b^\beta}{\beta} 
     +\frac{c^\gamma}{\gamma}.
\end{equation}

\noindent
Abbreviate $\xi(s,t)$ by $\xi$, then
integration by parts shows that
\begin{equation*}
\begin{split}
    &\frac{d}{ds}
     \int_0^1\Abs{\Nabla{t}\xi(s,t)}^p\, dt\\
    &=p\int_0^1\Abs{\Nabla{t}\xi}^{p-2}
     \langle\Nabla{t}\xi,\Nabla{t}\Nabla{s}\xi
     +[\Nabla{s},\Nabla{t}]\xi\rangle\, dt\\
    &=-p\int_0^1\left(\frac{d}{dt}
     \Abs{\Nabla{t}\xi}^{p-2}\right)
     \langle\Nabla{t}\xi,\Nabla{s}\xi\rangle\, dt
     -p\int_0^1\Abs{\Nabla{t}\xi}^{p-2}
     \langle\Nabla{t}\Nabla{t}\xi,
     \Nabla{s}\xi\rangle\, dt\\
    &\quad +p\int_0^1\Abs{\Nabla{t}\xi}^{p-2}
     \langle\Nabla{t}\xi,R(\p_su,\p_tu)\xi\rangle
     \, dt\\
    &=-p(p-2)\int_0^1\Abs{\Nabla{t}\xi}^{p-4}
     \langle\Nabla{t}\xi,
     \Nabla{t}\Nabla{t}\xi\rangle
     \langle\Nabla{t}\xi,\Nabla{s}\xi\rangle\, dt\\
    &\quad -p\int_0^1\Abs{\Nabla{t}\xi}^{p-2}
     \left(
     \langle\Nabla{t}\Nabla{t}\xi,\Nabla{s}\xi\rangle
     -\langle\Nabla{t}\xi,R(\p_su,\p_tu)\xi\rangle\right)
     \, dt.
\end{split}
\end{equation*}
Take the absolute value
of the right hand side,
apply the generalized
Young inequality~(\ref{eq:generalized-Young})
in the case\footnote{
The case $p=2$ is taken care of
by the standard Young inequality.}
$p>2$
with $\alpha=p/(p-2), \beta=p, \gamma=p$,
and the standard Young inequality
with $\alpha=p/(p-1), \beta=p$
to obtain the inequality
\begin{equation*}
\begin{split}
    &\frac{d}{ds}
     \int_0^1\Abs{\Nabla{t}\xi(s,t)}^p\, dt\\
    &\le p(p-1)\int_0^1\Abs{\Nabla{t}\xi}^{p-2}
     \Abs{\Nabla{t}\Nabla{t}\xi}\cdot
     \Abs{\Nabla{s}\xi}\, dt
     +pc_0^2\Norm{R}_\infty
     \int_0^1\Abs{\Nabla{t}\xi}^{p-1}
     \Abs{\xi}\, dt\\
    &\le p(p-1)\int_0^1\left(
     \frac{p-2}{p} \Abs{\Nabla{t}\xi}^p
     +\frac{1}{p}\Abs{\Nabla{t}\Nabla{t}\xi}^p
     +\frac{1}{p}\Abs{\Nabla{s}\xi}^p
     \right)\, dt\\
    &\quad +pc_0^2\Norm{R}_\infty
     \int_0^1\left(\frac{p-1}{p}
     \Abs{\Nabla{t}\xi}^p
     +\frac{1}{p}\Abs{\xi}^p\right)\, dt\\
    &\le C_1\left(\Norm{\xi_s}_{L^p(S^1)}^p
     +\Norm{\Nabla{s}\xi_s}_{L^p(S^1)}^p
     +\Norm{\Nabla{t}\Nabla{t}\xi_s}_{L^p(S^1)}^p
     \right).
\end{split}
\end{equation*}
Here $C_1>0$ is a constant
depending only on
$p,c_0,$ and $\Norm {R}_\infty$
and $\xi_s(t):=\xi(s,t)$.
Note that we used
lemma~\ref{le:plus-minus}
to estimate the terms
involving $\Nabla{t}\xi_s$.
Now fix $\sigma\in(a,b]$ and
integrate this inequality
over $s\in(a,\sigma]$
to obtain the estimate
$$
    \Norm{\Nabla{t}\xi_\sigma}_{L^p(S^1)}^p
    \le c\Norm{\xi}_{\Ww^{1,p}((a,b]\times S^1)}^p.
$$
Here we used compactness
of the support of $\xi$ and monotonicity
of the integral.
Since the right hand side is independent of $\sigma$,
the proof of Step~2 is complete.

\vspace{.1cm}
\noindent
{\sc Step 3.}
{\it We prove the lemma.}

\vspace{.1cm}
\noindent
Define $f(s,t):=\abs{\Nabla{t}\xi(s,t)}$
and $g(s,t):=\abs{\Nabla{t} X(s,t)}$.
By Step~1 with
$\alpha,q,$ and $r^\prime$ equal to $p$
and with $r=q^\prime=\infty$
we have
$$
     \int_a^b
     \int_0^1\left(\Abs{\Nabla{t}\xi(s,t)}
     \Abs{\Nabla{t} X(s,t)}\right)^p\, dtds
     =\Norm{fg}_p^p \\
     \le\Norm{\Nabla{t} \xi}_{\infty;p}^p
     \Norm{\Nabla{t} X}_{p;\infty}^p.
$$
Now apply Step~2 to the first factor.
For the second one
we exploit the fact that, since
the slices $s\times S^1$
of our domain are compact,
there is the
Sobolev embedding
$W^{1,p}(S^1)
\hookrightarrow L^\infty(S^1)$
with constant $\mu=\mu(p)>0$.
It follows that
\begin{equation*}
\begin{split}
     \int_a^b
     \Norm{\Nabla{t} X_s}_{L^\infty(S^1)}^p\,
     ds
    &\le\int_a^b
     \mu^p\Norm{\Nabla{t} X_s}_{W^{1,p}(S^1)}^p
     \, ds \\
   &=\mu^p \int_a^b
     \Norm{\Nabla{t} X_s}_{L^p(S^1)}^p
     +\Norm{\Nabla{t}\Nabla{t} X_s}_{L^p(S^1)}^p
     \, ds.
\end{split}
\end{equation*}
This concludes the proof of
proposition~\ref{prop:product-estimate}.
\end{proof}

\begin{proof}[Proof of 
theorem~\ref{thm:product-estimate}]
Proposition~\ref{prop:product-estimate}
with $N=\R^k$,
$u\equiv const$,
$\xi=v$, and $X=w$.
\end{proof}

\section{The parabolic Weyl lemma}
\label{sec:parabolic-weyl}

The structure of proof of
lemma~\ref{le:parabolic-weyl}
is the following.
First we approximate $u$ via convolution
by a family of smooth solutions $u_\eps$
which converge to $u$ in $L^1$.
The point is that we convolute
over \emph{individual time slices} 
$s\times\R$ for almost all times $s$
using mollifiers defined on $\R$.
(It is also possible to carry over
the proof of the original Weyl lemma
for the Laplacian using mollifiers
supported in $\R^2$. This leads to
restrictions and is explained
in a separate section below.)
On the other hand, given any
integer $k\ge 0$,
standard local $C^k$ estimates for
smooth solutions
of the linear homogeneous
heat equation
in terms of the $L^1$ norm apply;
see~\cite[Sec.~2.3 Thm.~9]{EVANS-PDE}.
They provide $C^k$ bounds
on compact sets in terms of
$\norm{u_\eps}_1$.
Now by Young's convolution inequality
$\norm{u_\eps}_1\le\norm{u}_1$.
Hence these bounds are uniform in $\eps$.
Therefore by Arzela-Ascoli the family $u_\eps$
converges in $C^{k-1}_{loc}(\Omega)$
to a map $v$. Hence $u=v$
by uniqueness of the limit.
As this is true for
every $k$ and, moreover, every point
is contained in a compact subset
of $\Omega$
it follows that $u\in C^\infty(\Omega)$.
Integration by parts then shows that
\begin{equation}\label{eq:heat-linear}
     \p_su-\p_t\p_tu=0
\end{equation}
on the interior of $\Omega$.
Since $u$ is $C^\infty$
on $\Omega$ this identity
continues to hold on $\Omega$.

\begin{proof}
[Proof of lemma~\ref{le:parabolic-weyl}]
Every point of $\Omega$ is contained
in (some translation of)
a parabolic set
$(-r^2,0]\times(-r,r)$
whose closure is
contained in $\Omega$ for some
$r>0$ sufficiently small.
Hence we may assume without loss of generality
that
$$
     \Omega=(-r^2,0]\times(-r,r),\qquad
     u\in L^1(\Omega).
$$
We prove the lemma in nine steps.

1) We introduce appropriate mollifiers:
Fix a smooth function
$\rho:\R\to[0,1]$ which is compactly supported
in the interval $(-1,1)$ and satisfies
$\int_\R\rho=1$.
For $\eps>0$ consider the mollifier
$$
     \rho_\eps(t):=\frac{1}{\eps}\,
     \rho\left(\frac{t}{\eps}\right).
$$
It is compactly supported in 
the interval $(-\eps,\eps)$
and satisfies $\int_\R\rho_\eps=1$.

2) For almost every $s\in\R$ we define
the family $\{\rho_\eps * u_s\}_{\eps>0}
\subset C_0^\infty(\R)$
and calculate the $L^1$ norm
of its derivatives:
Extend $u$ by zero
on $\R^2\setminus\Omega$
and denote the extension again by $u$.
Then $u\in L^1(\R^2)$
and
$$
     u_s:=u(s,\cdot)\in L^1(\R)
$$
for almost every $s\in\R$.
For such $s$ and $\eps>0$ define
$$
     \left(\rho_\eps * u_s\right) (t)
     =\int_\R\rho_\eps(t-\tau)u_s(\tau)\; d\tau.
$$
In this case $\rho_\eps * u_s\in C_0^\infty(\R)$,
$$
     \Norm{\rho_\eps * u_s-u_s}_{L^1(\R)}
     \to 0 \quad\text{as $\eps\to 0$,}
$$
and $\rho_\eps * u_s$ converges to $u_s$,
as $\eps\to 0$,
pointwise almost everywhere on $\R$;
see~\cite[App.~A]{Jost-PDE}.
Moreover, by Young's convolution
inequality we obtain that
$$
     \Norm{\rho_\eps*u_s}_{L^1(\R)}
     \le\Norm{\rho_\eps}_{L^1(\R)}
     \Norm{u_s}_{L^1(\R)}
     =\Norm{u_s}_{L^1(\R)}
$$
and, more generally, that
\begin{equation*}
\begin{split}
     \Norm{\frac{d^k}{dt^k}
     \left(\rho_\eps*u_s\right)}_{L^1(\R)}
     =\Norm{\left(\rho_\eps^{(k)}*u_s\right)}_{L^1(\R)}
    &\le\Norm{\rho_\eps^{(k)}}_{L^1(\R)}
     \Norm{u_s}_{L^1(\R)}\\
    &=\frac{\Norm{\rho^{(k)}}_{L^1(\R)}}{\eps^k}
     \Norm{u_s}_{L^1(\R)}
\end{split}
\end{equation*}
for every positive integer $k$.
Here $\rho^{(k)}$ denotes
the $k$-th derivative of $\rho$.

3) We prove that for $\eps>0$
the function defined by
$$
     u_\eps:\R^2\to\R,\quad
     (s,t)\mapsto
     (\rho_\eps*u_s)(t)
$$
is integrable and $u_\eps$ converges
to $u$ in $L^1(\R^2)$ as $\eps\to 0$.
Indeed by step~2)
$$
     \Norm{u_\eps}_{L^1(\R^2)}
     =\int_{\R}\Norm{\rho_\eps*u_s}_{L^1(\R)} ds
     \le\int_{\R}\Norm{u_s}_{L^1(\R)} ds
     =\Norm{u}_{L^1(\Omega)}.
$$
Now define the family of functions
$\{f_\eps:\R\to\R\}_{\eps>0}$
for almost every $s$ by
$$
     f_\eps(s)
     :=\Norm{\rho_\eps*u_s-u_s}_{L^1(\R)}.
$$
By the last estimate
these functions are integrable
$$
     \Norm{f_\eps}_{L^1(\R)}
     =\Norm{u_\eps-u}_{L^1(\R^2)}
     \le 2\Norm{u}_{L^1(\Omega)}.
$$
Moreover, they are dominated
almost everywhere
by an integrable function $g$.
Namely, by step~2
$$
     \Abs{f_\eps(s)}\le 2\Norm{u_s}_{L^1(\R)}=:g(s)
     ,\qquad
     \Norm{g}_{L^1(\R)}=2\Norm{u}_{L^2(\Omega)}.
$$
Step~2) again shows that
$f_\eps\to 0$ as $\eps\to 0$
for almost every $s$.
Hence by the Dominated Convergence
Theorem it follows that
\begin{equation*}
\begin{split}
     \lim_{\eps\to 0}
     \Norm{u_\eps-u}_{L^1(\R^2)}
    &=\lim_{\eps\to 0}
     \int_{\R}\Norm{\rho_\eps*u_s-u_s}_{L^1(\R)}\, ds\\
    &=\int_{\R}\left(\lim_{\eps\to 0}f_\eps\right)(s)
     \,ds\\
    &=0.
\end{split}
\end{equation*}

4) The function $u_\eps:\R^2\to\R$
defined in~3) admits integrable
weak $t$-derivatives of all orders:
Fix $\eps>0$
and a positive integer $k$,
then
\begin{equation*}
\begin{split}
     \int_{\R^2} u_\eps\,\p_t^k\psi\,dt\,ds
    &=\int_{\R^2}
     \left(\rho_\eps*u_s\right)
     \p_t^k\psi\,dt\,ds\\
    &=(-1)^k\int_{\R^2}
     (\rho_\eps^{(k)}*u_s)\,
     \psi\,dt\,ds
\end{split}
\end{equation*}
for every $\psi\in C_0^\infty(\R^2)$.
Here $\rho_\eps^{(k)}$ denotes
the $k$-th derivative.
Moreover, the first step
is by definition of $u_\eps$
and the second step
by integration by parts followed by
commuting differentiation and convolution.
Next observe that the function
$\rho_\eps^{(k)}*u_s$
is integrable. Indeed step~2) shows that
$$
     \int_{\R}\bigl\|
     \rho_\eps^{(k)}*u_s\bigr\|_{L^1(\R)} ds
     \le\frac{c_k}{\eps^k}
     \Norm{u}_{L^1(\Omega)}
$$
with constant
$c_k=c_k(\rho)=\norm{\p_t^k\rho}_{L^1(\R)}$.
Hence the weak $t$ derivatives of
the function $u_\eps:\R^2\to\R$
are integrable and given by
$$
     \p_t^ku_\eps(s,t)
     =(\rho_\eps^{(k)}*u_s)(t).
$$

5) Fix $\eps>0$ and consider
the subset
$$
     \Omega_\eps
     =(-r^2,0]\times(-r+\eps,r-\eps)
     \subset\Omega.
$$
We prove by induction that
for every integer $k\ge1$
the weak derivative $\p_s^ku_\eps$
exists in $L^1(\Omega_\eps)$
and equals $\p_t^{2k}u_\eps$
almost everywhere
on $\Omega_\eps$.
Here assumption~(\ref{eq:weak-par}) enters.

\noindent
{\it Case $k=1$.}
Straightforward calculation shows that
\begin{equation*}
\begin{split}
     \int_\Omega \psi\, \p_t\p_t u_\eps
    &=\int_{\R^2} \psi(s,t)\left(
     \int_\R 
     \p_t\p_t\rho_\eps(t-\tau) u_s(\tau)
     \, d\tau \right) ds\,dt\\
    &=\int_{\R^3} \psi(s,t)\, u(s,\tau)\,
     \p_\tau\p_\tau\rho_\eps(t-\tau)\,
     d\tau\, ds\, dt\\
    &=\int_{\R} \left(\int_{\R^2} u(s,\tau)\,
     \p_\tau\p_\tau\Bigl(\rho_\eps(t-\tau)
     \psi(s,t)\Bigr)\, d\tau\,ds\right)
     dt\\
    &=-\int_{\R} \left(\int_{\R^2} u(s,\tau)\,
     \p_s\Bigl(\rho_\eps(t-\tau)
     \psi(s,t)\Bigr)\, d\tau\,ds\right)
     dt\\
    &=-\int_{\R^2} \left(
     \int_\R \rho_\eps(t-\tau) u_s(\tau)\, d\tau
     \right) \p_s\psi(s,t)\,ds\,dt\\
    &=-\int_\Omega u_\eps \p_s\psi
\end{split}
\end{equation*}
for every test function
$\psi\in C_0^\infty(\INT\,\Omega_\eps)$.
This identity means that on $\INT\,\Omega_\eps$,
hence on $\Omega_\eps$, the weak
derivative $\p_su_\eps$
exists and equals
$\p_t\p_tu_\eps$ which
is integrable by~4).
To prove the identity
note that the first and the final step
are by definition of $u_\eps$ in~3).
To obtain the second step we
changed the order of integration
and applied the chain rule.
Steps three and five are obvious.
To obtain step four we used
assumption~(\ref{eq:weak-par})
and the fact that
$$
     \phi_t(s,\tau)
     :=\rho_\eps(t-\tau)\psi(s,t)
$$
lies in $C^\infty_0(\INT\,\Omega)$
for every $t\in\R$.
To prove this assume that
$\phi_t(s,\tau)\not=0$.
This means firstly that
$\rho_\eps(t-\tau)\not=0$,
hence $\tau\in[-\eps+t,\eps+t]$,
and secondly that $\psi(s,t)\not=0$.
Now fix a sufficiently small
constant $\delta=\delta(\eps)>0$ such that
$$
     \supp\,\psi
     \subset
     [-r^2+\delta,-\delta]
     \times [-r+\eps+\delta,r-\eps-\delta]
     \subset
     \INT\,\Omega_\eps.
$$
It follows that
\begin{equation*}
\begin{split}
     (s,\tau)
    &\in[-r^2+\delta,-\delta]\times
     [-\eps+(-r+\eps+\delta),\eps+(r-\eps-\delta)]\\
    &=[-r^2+\delta,-\delta]\times
     [-r+\delta,r-\delta]
     \subset\INT\,\Omega.
\end{split}
\end{equation*}

\noindent
{\it Induction step $k\Rightarrow k+1$.}
The calculation
follows the same steps as above.
We only indicate the minor differences.
Assume that case $k$ is true, then
\begin{equation*}
\begin{split}
     \int_\Omega \psi\, \p_t^{2k+2} u_\eps
    &=(-1)^{k+1}\int_{\R} \left(\int_{\R^2} u(s,\tau)\,
     \p_s^{k+1}\Bigl(\rho_\eps(t-\tau)
     \psi(s,t)\Bigr)\, d\tau\,ds\right)
     dt\\
    &=(-1)^{k+1}\int_{\R^2} u_\eps(s,t)\,
     \p_s^{k+1} \psi(s,t)\,ds\,dt\\
    &=-\int_\Omega \left(\p_s^ku_\eps\right)
     \p_s\psi
\end{split}
\end{equation*}
for every test function
$\psi\in C_0^\infty(\INT\,\Omega_\eps)$.
Note that to obtain
the first step
we applied $k+1$ times
assumption~(\ref{eq:weak-par})
using that $\phi_t$ and therefore also its
derivatives are in $C_0^\infty(\INT\,\Omega)$.
In the final step
we used the induction
hypothesis to integrate
by parts $k$ times
the $s$ variable.

6) The function
$u_\eps$ is smooth
on the closure of $\Omega_\eps$:
Fix $\eps>0$ and positive integers $m$ and $\ell$.
Then $\p_t^m\p_s^\ell u_\eps$
equals $\p_t^{m+2\ell} u_\eps$
almost everywhere on $\Omega_\eps$ by~5)
and the latter function is integrable by~4).
This proves that
$$
     u_\eps\in
     \bigcap_{k=1}^\infty W^{k,1}(\Omega_\eps)
     =C^\infty(\overline\Omega_\eps).
$$
Moreover, by~5) with $k=1$,
each $u_\eps$ solves
the linear heat
equation~(\ref{eq:heat-linear})
on $\overline\Omega_\eps$.

7) From now on fix a compact
subset $Q\subset\Omega$.
Then for every positive integer $k$
the family $u_\eps$ is uniformly bounded
in the Banach space
$C^k(Q)$ by a constant
$\mu_k=\mu_k(Q)$:
To see this consider
the compact parabolic set
of radius $r$, height $r^2$,
and top center point $(s,t)\in Q$
given by
$$
     P_r(s,t)
     :=[s-r^2,s]\times[t-r,t+r].
$$
By compactness of $Q$
there is a constant
$\eps_0=\eps_0(Q)>0$
such that $Q\subset \Omega_{\eps_0}$
and, moreover, there is a constant
$\rho=\rho(\eps_0,Q)>0$ such that
$$
     P_{2\rho}(s,t)\subset\Omega_{\eps_0}
$$
for every point $(s,t)\in Q$.
By step~6) each function $u_\eps$
with $\eps\in(0,\eps_0)$
is a smooth solution
of the linear homogeneous
heat equation~(\ref{eq:heat-linear})
on the domain $\Omega_\eps$
and therefore on
$\Omega_{\eps_0}$.
Now given a point $(\sigma,\tau)\in Q$
and a pair of nonnegative
integers $m,\ell$ there is
by~\cite[Sec.~2.3 Thm.~9]{EVANS-PDE}
a constant $c_{m,\ell}(\sigma,\tau)$
such that
$$
     \max_{P_\frac{\rho}{2}(\sigma,\tau)}
     \Abs{\p_t^m\p_s^\ell v}
     \le \frac{c_{m,\ell}(\sigma,\tau)}{\rho^{m+2\ell+3}}
     \Norm{v}_{L^1(P_\rho(\sigma,\tau))}
$$
for all smooth solutions $v$ of
the heat equation~(\ref{eq:heat-linear})
in $P_{2\rho}(\sigma,\tau)$.
By compactness of $Q$ there are finitely many
sets $P_{\rho/2}(\sigma_\nu,\tau_\nu)$ covering $Q$.
Then the corresponding estimates 
for $v=u_\eps$ and
$m,\ell=0,1,\ldots,k$ imply that
$$
     \Norm{u_\eps}_{C^k(Q)}
     \le \alpha \Norm{u_\eps}_{L^1(\R^2)}
     \le \alpha \Norm{u}_{L^1(\Omega)}
$$
for every $\eps\in(0,\eps_0)$
and where the constant $\alpha>0$
depends only on the compact set $Q$
(since $\rho$ eventually depends
on $Q$ only).
Here the second inequality is proved in step~3).

8) We prove that $u\in C^\infty(Q)$.
In the setting of step~7) the Arzela-Ascoli
theorem for each $k$
together with choosing a diagonal
subsequence yields existence of
a sequence $\eps_k\to 0$, as $k\to\infty$,
and a smooth function $\hat u$ defined on $Q$
such that $u_{\eps_k}\to \hat u$ 
in $C^\infty(Q)$, as $k\to\infty$.
On the other hand, the sequence
$u_{\eps_k}$ converges to $u$
in $L^1(Q)$ by step~3).
Hence $u=\hat u$ by uniqueness of limits.

9) We prove lemma~\ref{le:parabolic-weyl}.
Since every point of $\Omega$
is contained in a compact subset $Q$
and $u\in C^\infty(Q)$ by step~8),
the function $u$ is smooth on $\Omega$.
To prove the identity $\p_su-\p_t\p_tu=0$
on $\Omega$
assume by contradiction that
this identity is violated
at a point $(s_*,t_*)$ of $\Omega$.
There are two cases.
\\
If $(s_*,t_*)$ is in the interior of $\Omega$,
then by smoothness of $u$
there is a sufficiently small 
open neighborhood $U$ of $(s_*,t_*)$ in $\Omega$
and a function $\phi\in C^\infty_0(U,[0,1])$
with $\phi(s_*,t_*)=1$
such that assumption~(\ref{eq:weak-par}) fails.
(For instance, if $c>0$ 
is the value of the function
$\p_su-\p_t\p_tu$ at the point $(s_*,t_*)$,
let $U$ be the subset of $\Omega$
on which $\p_su-\p_t\p_tu>c/2$.)
\\
If $(s_*,t_*)$ is in the boundary
$0\times(-r,r)$ of $\Omega$, the former
argument works for an interior point
of $\Omega$ sufficiently 
close to $(s_*,t_*)$.
Existence of such an interior point uses again
smoothness of $u$ on $\Omega$.
This proves the parabolic Weyl lemma.
\end{proof}

\subsection*{The heat ball approach}
\label{subsec:heat-ball-approach}

A natural first try to prove
lemma~\ref{le:parabolic-weyl}
is to carry over the proof
of the original Weyl lemma
for the Laplacian;
see e.g.~\cite{GT77,Jost-PDE}).
This works, but with two restrictions.
Firstly, the set $\Omega$ should be open
in $\R^2$ and, secondly,
the function $u$ should be locally $L^q$
integrable over $\Omega$ for some $q>3$.

The original proof is based on the fact
that harmonic functions are characterized
by their mean value property
with respect to balls or spheres.
There is a similar statement
for solutions to the heat equation.
However, in the corresponding
parabolic mean value equalities
a weight factor different from one appears and
this eventually leads to the restriction $q>3$.
A further difference is that
balls and spheres over which the means
are taken are replaced by heat balls
and their boundaries, respectively.
The parabolic mean value property
with respect to boundaries
is due to Fulks~\cite{Fu66} and with respect to
heat balls it is due to Watson~\cite{Wa73}.
Here it is required that $\Omega$
is open in $\R^2$.

Recall that the
\emph{fundamental solution to the heat equation}
is given by
\begin{equation}\label{eq:fundamental-solution}
    \Phi(s,t):=
  \begin{cases}\displaystyle
    \frac{1}
    {\sqrt{4\pi s}}\;
    e^{\textstyle -\frac{\textstyle t^2}{\textstyle 4s}}
    &\text{, $s>0$, $t\in\R$,}\\
    0&\text{, $s<0$, $t\in\R$.}
  \end{cases}
\end{equation}
For $r>0$ we denote by $E_r=E_r(0,0)$
the area which is enclosed by the level set
determined by the identity
$$
    \Phi(-s,-t)
    =\frac{1}{2r\sqrt{\pi}}.
$$
This level set
is parametrized by
$$
     t(s)=\pm\sqrt{2s\ln\frac{-s}{r^2}},\quad
     s\in(-r^2,0).
$$
Think of it
as resembling an ellipse in the plane
such that the origin is located
at the 'north pole'.
For general base point $(s,t)\in\R^2$
the set $E_r(s,t)$
is defined by translation.
These sets are called
\emph{heat balls of ``radius'' $r$}.
Following Watson~\cite{Wa73}
we call a function $u$ defined
on an open subset $\Omega\subset\R^2$
a \emph{temperature} if
$\p_t\p_tu$ and $\p_su$
are continuous functions on $\Omega$
and the heat equation
$\p_su-\p_t\p_tu=0$
is satisfied pointwise
on $\Omega$.
(Note that temperatures are automatically
$C^\infty$ smooth;
see e.g.~\cite[Sec.~2.3 Thm.~8]{EVANS-PDE}.)

\begin{theorem}[\cite{Wa73} \S~10 Cor.~1]
\label{thm:watson}
Let $u$ be a continuous function
on an open subset $\Omega\subset\R^2$.
Then the following are equivalent.
\begin{enumerate}
\item[\rm(a)]
  The function $u$ is a temperature.
\item[\rm(b)]
  At every point $(s,t)\in\Omega$
  the weighted mean value equality for $u$
  holds
  $$
     u(s,t)
     =\frac{1}{8\sqrt{\pi}\cdot r}
     \int_{E_r(s,t)} 
     \frac{(t-\tau)^2}{(s-\sigma)^2}\; 
     u(\sigma,\tau)\; d\tau\, d\sigma
  $$
  whenever
  $\overline{E_r(s,t)}\subset\Omega$.
\end{enumerate}
\end{theorem}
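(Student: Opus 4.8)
The plan is to follow Evans's treatment of the mean value property for the heat equation in $\R^{n}$, specialised to one space dimension $n=1$, and to supplement it by a mollification argument in order to remove the a~priori regularity gap in the implication (b)$\Rightarrow$(a). I would prove (a)$\Rightarrow$(b) by an $r$-derivative identity, and (b)$\Rightarrow$(a) by first mollifying, reducing to the smooth case, and then running the first computation backwards.

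For (a)$\Rightarrow$(b) I would argue as follows. After a translation, take the base point to be the origin with $\overline{E_r(0,0)}\subset\Omega$, and for $0<\rho\le r$ set
\[
     \phi(\rho):=\frac1\rho\int_{E_\rho(0,0)}
     \frac{t^{2}}{s^{2}}\,u(s,t)\,dt\,ds .
\]
The dilation $(s,t)=(\rho^{2}\hat s,\rho\hat t)$ maps $E_\rho(0,0)$ onto the fixed heat ball $E_1(0,0)$ and scales the weight $t^{2}/s^{2}$ by $\rho^{-2}$, so $\phi(\rho)=\int_{E_1(0,0)}\frac{\hat t^{2}}{\hat s^{2}}\,u(\rho^{2}\hat s,\rho\hat t)\,d\hat t\,d\hat s$. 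Differentiating in $\rho$, reverting the dilation, and integrating by parts against the explicit weight $\psi(s,t):=\frac{t^{2}}{4s}-\frac12\log\frac{-s}{\rho^{2}}$, which is nonnegative on $E_\rho(0,0)$ and vanishes on $\p E_\rho(0,0)$, converts $\phi'(\rho)$ into an integral over $E_\rho(0,0)$ of $\bigl(\p_su-\p_t\p_tu\bigr)$ against $\psi$, with all boundary contributions cancelling because $\psi=0$ there. By (a) this vanishes, so $\phi$ is constant; the only subtlety is the integrability near the singular tip $(s,t)=(0,0)$, handled by a routine cut-off near the tip followed by a limiting argument. Finally, continuity of $u$ gives $\phi(\rho)\equiv\lim_{\rho\to0^{+}}\phi(\rho)=u(0,0)\int_{E_1(0,0)}\frac{\hat t^{2}}{\hat s^{2}}\,d\hat t\,d\hat s=8\sqrt{\pi}\,u(0,0)$, the last equality being the elementary integral that reduces to $\Gamma(5/2)=\tfrac34\sqrt\pi$; this is (b).

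For (b)$\Rightarrow$(a), the key point I would exploit is that the identity in (b) is an affine and translation invariant constraint on $u$: if $u$ satisfies it on $\Omega$, then each translate $u(\cdot+z)$ satisfies it on $\Omega-z$, and averaging these against a mollifier $\eta_\eps$ on $\R^{2}$ shows (via Fubini, using $t^2/s^2\in L^1(E_r)$ and local boundedness of $u$) that the Euclidean convolution $u_\eps:=u*\eta_\eps$ satisfies (b) on $\Omega_\eps:=\{(s,t)\in\Omega:\dist((s,t),\p\Omega)>\eps\}$. Since $u_\eps$ is smooth, the computation of the previous paragraph applies to it: constancy of $\phi$ forces $\phi'(\rho)\equiv0$, and since $\psi>0$ on the interior of the heat ball, letting $\rho\to0^{+}$ localises this to $\p_su_\eps-\p_t\p_tu_\eps=0$ at every interior point. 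Thus each $u_\eps$ is a smooth temperature and $u_\eps\to u$ locally uniformly as $\eps\to0$. The interior estimates for smooth solutions of the heat equation, used exactly as in the main proof of lemma~\ref{le:parabolic-weyl} (namely \cite[Sec.~2.3 Thm.~9]{EVANS-PDE} together with Arzela--Ascoli), then upgrade this to $C^{\infty}_{loc}$ convergence, so $u$ is smooth and inherits $\p_su-\p_t\p_tu=0$; that is, (a).

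The main obstacle will be the integration by parts in (a)$\Rightarrow$(b): pinning down the logarithmic weight $\psi$ that vanishes on $\p E_\rho(0,0)$, verifying that no boundary term survives, controlling the tip singularity when differentiating under the integral, and extracting the normalising constant $8\sqrt\pi$. Once this is in place the converse is essentially formal, since it only reuses machinery already set up in this paper.
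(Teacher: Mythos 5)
The paper does not prove Theorem~\ref{thm:watson}: it is stated as a citation to Watson~\cite{Wa73}, and the surrounding text only \emph{uses} it (to sketch an alternative ``heat ball'' route to the parabolic Weyl lemma). So there is no in-paper proof to compare against; I am evaluating your plan on its own terms.

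Your outline is essentially correct and coincides with the standard Evans/Fulks/Watson treatment. For (a)$\Rightarrow$(b): the parabolic dilation $(s,t)\mapsto(\rho^2\hat s,\rho\hat t)$ normalizing $E_\rho$ to $E_1$ is right, your weight $\psi(s,t)=\tfrac{t^2}{4s}-\tfrac12\log\tfrac{-s}{\rho^2}$ does vanish on $\p E_\rho(0,0)$ (that is exactly the level set condition $\Phi(-s,-t)=\tfrac{1}{2\rho\sqrt\pi}$ after taking logarithms) and is positive in the interior, and the normalizing constant checks out: $\int_{E_1}\tfrac{\hat t^2}{\hat s^2}\,d\hat t\,d\hat s=\tfrac{32}{3}\Gamma(5/2)=8\sqrt\pi$. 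For (b)$\Rightarrow$(a): mollifying with a Euclidean mollifier and using Fubini (justified since $t^2/s^2\in L^1(E_r)$ and $u$ is continuous, hence locally bounded) indeed shows $u_\eps$ inherits the mean value identity on $\Omega_\eps$; then the positivity of $\psi$ on the interior, the fact that $E_\rho(s_0,t_0)$ shrinks to $(s_0,t_0)$, and continuity of $\p_su_\eps-\p_t\p_tu_\eps$ give the PDE for $u_\eps$ by contradiction. Passing to the limit via the interior estimates plus Arzela--Ascoli, exactly as in the paper's proof of lemma~\ref{le:parabolic-weyl}, recovers smoothness and the equation for $u$. You have correctly identified the sole delicate point as writing $\phi'(\rho)$ as a constant multiple of $\int_{E_\rho}\psi\,(\p_su-\p_t\p_tu)$ with all boundary terms dropping out; this is the genuine computational content of the theorem and requires care (the weight depends on $\rho$ through the $\log\rho$ term, and the integrations by parts near the singular tip need a cutoff), but it is a known computation and nothing in your plan misfires. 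In short: a correct reconstruction of a result the paper imports without proof.
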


We sketch the proof of the parabolic Weyl lemma 
(subject to the two restrictions mentioned above)
along the lines of the original proof
for the Laplacian.
Since smoothness is a local property
we may assume without loss of generality
that $\Omega\subset\R^2$ is bounded. Moreover, we extend
$u$ by zero to $\R^2\setminus\Omega$
without change of notation.
Hence $u\in L^q(\Omega)$ for some $q>3$.
The main idea is to mollify the given weak solution
$u$ to obtain a family
$\{u_r\}\subset C^\infty_0(\R^2)$
of smooth functions converging in $L^1$,
hence almost everywhere, to $u$.
Here we use a family of mollifiers
$\{\rho_r\}$ which are compactly supported
in the heat ball $E_r\subset\R^2$
and set $u_r=\rho_r*u$ where $*$ denotes convolution.
Assumption~(\ref{eq:weak-par})
is then used to show that each function $u_r$
is a temperature on a slightly smaller
set $\Omega_r\subset\Omega$ which by definition
consists of all points $(s,t)\in\Omega$
such that the closure of
the heat ball $E_r(s,t)$
is contained in $\Omega$.
Hence each $u_r:\Omega_r\to\R$ satisfies the
weighted mean value equality
of theorem~\ref{thm:watson}.
On the other hand, the family $\{u_r\}$
is uniformly bounded --
here the restriction $q>3$ arises --
and equicontinuous.
Hence by Arzela-Ascoli it converges
in $C^0$ to a continuous
function $v$ as $r\to 0$.
Since the functions $u_r$
satisfy the mean value equality,
so does their $C^0$ limit $v$,
and therefore $v$ is a temperature
by Watson's result theorem~\ref{thm:watson}.
But $v=u$, since $\{u_r\}$
converges to $u$ almost everywhere.

As it is essentially the only point where
the proof of the original Weyl lemma
for the Laplacian differs
we provide the details
of uniform boundedness of
the family $\{u_r\}$ on $\Omega_R$.
More precisely, fix a constant $R>0$
and restrict $r$ to the interval $(0,R/2)$.
Then
$$
     \Omega_R\subset
     \Omega_{R/2}
     \subset\Omega_r
     \subset\Omega,
     \qquad
     \overline{E_{R/2}(s,t)}
     \subset\Omega_{R/2}\quad
     \forall (s,t)\in\Omega_R.
$$
Hence by theorem~\ref{thm:watson}
each temperature $u_r$
satisfies the mean value equality
on all heat balls with base point
in $\Omega_R$
and radius less or equal to $R/2$.
To see that the family
$\{u_r\}_{r\in(0,R/2)}$ is
uniformly bounded on $\Omega_R$
fix a point $(s_0,t_0)\in\Omega_R$.
Then by the mean value
equality for the temperature $u_r$ over
the heat ball $E_{R/2}(s_0,t_0)$
it follows that
\begin{equation*}
\begin{split}
     \Abs{u_r(s_0,t_0)}
    &\le\frac{1}{4\sqrt{\pi} R}
     \int_{E_{R/2}(s_0,t_0)}
     \frac{(t_0-\tau)^2}{(s_0-\sigma)^2}
     \Abs{u_r(\sigma,\tau)}
     \; d\tau d\sigma\\
    &=\frac{1}{4\sqrt{\pi} R}
     \int_{E_{R/2}(0,0)}
     \frac{t^2}{s^2}
     \Abs{u_r(s+s_0,t+t_0)}
     \; dt ds\\
    &\le\frac{1}{4\sqrt{\pi} R}
     \Norm{t^2s^{-2}}_{L^p(E_{R/2})}
     \Norm{u_r}_{L^q(\R^2)}\\
    &\le c_{q,R}\Norm{u}_{L^q(\Omega)}.
\end{split}
\end{equation*}
To obtain step two we
introduced new variables
$t=\tau-t_0$ and $s=\sigma-s_0$.
In step three we use
H\"older's inequality with $1/p+1/q=1$
and $p,q>1$.
Since the weight function $t^2s^{-2}$
is not bounded on $E_{R/2}$
we can't get away with
pulling out the sup norm
as in the proof of the original
Weyl lemma for the Laplacian
where the weight is one.
In the last step we used that
$$
     \Norm{u_r}_{L^q(\R^2)}
     =\Norm{\rho_r*u}_{L^q(\R^2)}
     \le\Norm{\rho_r}_{L^1(\R^2)}\Norm{u}_{L^q(\R^2)}
     =\Norm{u}_{L^q(\Omega)}
$$
by Young's convolution inequality.
Moreover, the constant $c_{q,R}$ is given by
$\norm{t^2s^{-2}}_{L^p(E_{R/2})}/
4\sqrt{\pi} R$ with $p=\frac{q}{q-1}$.
To see that it is finite
observe that
\begin{equation*}
\begin{split}
     \Norm{t^2s^{-2}}_{L^p(E_1)}^p
    &=\frac{2^{p+\frac{3}{2}}}{2p+1}
     \int_{-1}^0
     \frac{\left(s\ln(-s)\right)^{p+\frac{1}{2}}}
     {(-s)^{2p}} ds\\
    &=\frac{2^{p+\frac{3}{2}}}{2p+1}
     \int_0^\infty
     x^{p+\frac{1}{2}} e^{-x(\frac{3}{2}-p)} dx\\
    &=\frac{2^{p+\frac{3}{2}}}{2p+1}
     \frac{\Gamma(p+\frac{3}{2})}
     {\left(\frac{3}{2}-p\right)^{p+\frac{3}{2}}}.
\end{split}
\end{equation*}
Here we used the change of
variables $x=-\log(-s)$ in the second step,
the last step is valid whenever
$-\frac{3}{2}<p<\frac{3}{2}$, and $\Gamma$
denotes the gamma function.
The earlier use of H\"older's inequality
further restricts $p$ to the
interval $(1,\frac{3}{2})$
and this is equivalent
to $q=\frac{p}{p-1}>3$.
It remains to replace the unit heat ball $E_1$
by $E_{R/2}$. This leads to a further constant
which depends only on $R$ and $p$.

\section{Local regularity}
\label{sec:local-regularity}

The parabolic analogue of
the Calderon-Zygmund
inequality is the following
fundamental $L^p$ estimate.
It is used in the proof of
theorem~\ref{thm:local-regularity}
on local regularity
and it implies the interior estimates
of theorem~\ref{thm:interior-estimate}
by induction.

\begin{theorem}[Fundamental $L^p$ estimate]
\label{thm:kerD-CalZyg}
For every $p>1$,
there is a constant 
$c=c(p)>0$ such that
$$
     \norm{\p_sv}_p
     +\norm{\p_t\p_tv}_p
     \le c
     \norm{\p_sv-\p_t\p_tv}_p
$$
for every $v\in C_0^\infty(\R^2)$.
The same statement is even true for
the domain $\HH^-$.
\end{theorem}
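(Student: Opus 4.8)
\noindent
The plan is to reduce the inequality on $\R^2$ to the $L^p$-boundedness of two constant-coefficient Fourier multiplier operators, and then to deduce the version on $\HH^-$ from the $\R^2$ version by using that the fundamental solution~(\ref{eq:fundamental-solution}) is supported in $\{s\ge0\}$. Fix $1<p<\infty$ and $v\in C_0^\infty(\R^2)$, put $f:=\p_sv-\p_t\p_tv\in C_0^\infty(\R^2)$, and pass to the Fourier transform, with $\xi$ dual to $s$ and $\eta$ dual to $t$. Then $\hat f=(i\xi+\eta^2)\hat v$, so off the origin $\widehat{\p_sv}=m_1\hat f$ and $\widehat{\p_t\p_tv}=m_2\hat f$ with $m_1(\xi,\eta):=i\xi/(i\xi+\eta^2)$ and $m_2(\xi,\eta):=-\eta^2/(i\xi+\eta^2)$, and $m_1-m_2\equiv1$. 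Hence it is enough to show that the operator $m_1(D)$ (equivalently $m_2(D)$, since the identity is bounded) maps $L^p(\R^2)$ to itself.

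The symbols $m_1,m_2$ are bounded by $1$ in modulus, of class $C^\infty$ on $\R^2\setminus\{0\}$, and invariant under the parabolic dilations $(\xi,\eta)\mapsto(\la^2\xi,\la\eta)$; differentiating this invariance and using smoothness on the compact parabolic unit sphere yields the non-isotropic Mikhlin bounds $\Abs{\p_\xi^a\p_\eta^b m_j(\xi,\eta)}\le C_{a,b}(\Abs{\xi}^{1/2}+\Abs{\eta})^{-2a-b}$. I would then invoke the calculus of singular integrals of mixed homogeneity, by which such a multiplier is bounded on $L^p(\R^2)$ for $1<p<\infty$. Equivalently, one argues directly: $\p_t\p_tv=K*f$ with $K=\p_t\p_t\Phi$ a principal-value kernel which is smooth off the origin, homogeneous of degree $-3$ under the parabolic dilations, and satisfies the usual cancellation; thus $K$ is a Calder\'on--Zygmund kernel on $\R^2$ equipped with the parabolic quasi-distance and Lebesgue measure, a space of homogeneous type of homogeneous dimension $3$. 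Since $L^2$-boundedness is immediate from Plancherel and $\Norm{m_j}_\infty<\infty$, the Calder\'on--Zygmund theorem on spaces of homogeneous type upgrades it to $L^p$, $1<p<\infty$. Adding the estimates for $m_1(D)$ and $m_2(D)$ proves the inequality on $\R^2$.

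For the assertion on $\HH^-$, let $v\in C_0^\infty(\HH^-)$ and again $f:=\p_sv-\p_t\p_tv$. This $f$ is bounded with compact support in $\HH^-$, so its extension by zero lies in $L^p(\R^2)$ with $\Norm{f}_{L^p(\R^2)}=\Norm{f}_{L^p(\HH^-)}$. Because $\Phi(s,\cdot)=0$ for $s<0$, one has $(\Phi*f)(s,\cdot)=\int_{-\infty}^s\Phi(s-\si,\cdot)*f(\si,\cdot)\,d\si$, which for $s\le0$ uses only values $f(\si,\cdot)$ with $\si\le s\le0$; the variation-of-constants formula then identifies $\Phi*f$ on $\HH^-$ with $v$, the unique solution of $\p_sv-\p_t\p_tv=f$ on $(-\infty,0]\times\R$ vanishing near $s=-\infty$. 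Since $f\mapsto\p_s(\Phi*f)$ and $f\mapsto\p_t\p_t(\Phi*f)$ agree with $m_1(D)$ and $m_2(D)$ (a priori on Schwartz functions, hence on $L^p$ by the previous step), restricting to $\HH^-$ the $\R^2$ bound applied to $f\in L^p(\R^2)$ gives $\Norm{\p_sv}_{L^p(\HH^-)}+\Norm{\p_t\p_tv}_{L^p(\HH^-)}\le c\Norm{f}_{L^p(\R^2)}=c\Norm{\p_sv-\p_t\p_tv}_{L^p(\HH^-)}$.

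The genuinely technical point is the $L^p\to L^p$ bound for the multipliers. If one prefers not to cite the theory of singular integrals with mixed homogeneity, one must carry out the parabolic Calder\'on--Zygmund decomposition by hand: Euclidean cubes are replaced by parabolic boxes $\{\abs{s}<r^2,\ \abs{t}<r\}$, and one verifies the H\"ormander cancellation condition for the explicit kernel $K=\p_t\p_t\Phi$ relative to these boxes. Everything else --- the Fourier identities of the first step and the causality argument for $\HH^-$ --- is routine.
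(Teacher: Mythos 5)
Your argument is correct, and for the half--plane part it is genuinely different from what the paper does. For $\R^2$ you reprove from scratch what the paper simply cites: \cite[theorem~C.2]{SaJoa-LOOP}, which is also a Marcinkiewicz--Mihlin multiplier argument with the same symbols $i\xi/(i\xi+\eta^2)$ and $-\eta^2/(i\xi+\eta^2)$, so here the two routes coincide in substance (your parabolic Mikhlin bounds $\abs{\p_\xi^a\p_\eta^b m_j}\lesssim(\abs{\xi}^{1/2}+\abs{\eta})^{-2a-b}$ are the non-isotropic version of what that reference verifies). The divergence is in the treatment of $\HH^-$. The paper cites Lieberman's interior $L^p$ estimate \cite[proposition~7.11]{LIEBERMAN} directly for the parabolic cylinder $(-T/2,0]\times(a,b)$ inside $(-T,0)\times\R$, thereby establishing the half--plane case independently, and then remarks that this in turn yields $\R^2$ by translation --- so the paper's logical direction is $\HH^-\Rightarrow\R^2$. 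You instead prove $\R^2$ first and then deduce $\HH^-$ via causality of the heat kernel: since $\Phi$ is supported in $\{s\ge0\}$, the extension of $f=\p_sv-\p_t\p_tv$ by zero to $\R^2$ satisfies $(\Phi*\tilde f)\vert_{\HH^-}=v$ (uniqueness of the Cauchy problem for bounded data from $s=-\infty$, both sides vanishing for $s$ sufficiently negative), and the $\R^2$ multiplier bound applied to $\tilde f$ restricts to the desired estimate on $\HH^-$. Your causality deduction is self-contained and arguably cleaner than reaching for Lieberman's general result, while the paper's route has the advantage of not needing the variation-of-constants identification and of being robust to variable coefficients. One small point worth making explicit in your write-up: to conclude $\Phi*\tilde f=v$ on $\HH^-$ you are using uniqueness for the Cauchy problem with bounded data on a finite time interval, which holds here because both $v$ (compactly supported) and $\Phi*\tilde f$ (with $\tilde f$ bounded, compactly supported, and $\norm{\Phi(s-\sigma,\cdot)}_{L^1}=1$) are bounded; this is standard but should be said.
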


\begin{proof}
A proof for $\R^2$
is given in~\cite[theorem~C.2]{SaJoa-LOOP}
by the Marcinkiewicz-Mihlin
multiplier method.
In the case of the lower half plane $\HH^-$ 
choose a compactly supported
smooth function $v$ on $\HH^-$
and constants $T>0$ and $a<b$
such that 
$\supp\,u\subset(-T/2,0]\times(a,b)$.
Then~\cite[proposition~7.11]{LIEBERMAN}
with $n=1$, $A^{11}=1$, 
$\lambda=\Lambda=1$,
the cube 
$K_0=(-T/2,0]\times(a,b)$
in $(-T,0)\times \R$, and
the function 
$f=\p_su-\p_t\p_tu$ proves the statement.
Note that the case $\HH^-$ implies the case $\R^2$
by translation.
\end{proof}

\begin{theorem}[Local regularity]
\label{thm:local-regularity}
Fix a constant $1<q<\infty$,
an integer $k\ge0$, and an open
subset $\Omega\subset\HH^-$.
Then the following is true.
\begin{itemize}

\item[\rm a)]
If $u\in L^1_{loc}(\Omega)$ and
$f\in\Ww^{k,q}_{loc}(\Omega)$ satisfy
\begin{equation}\label{eq:weak}
     \int_{\Omega}
     u\left(-\p_s\phi-\p_t\p_t\phi\right)
     =\int_{\Omega} f\phi
\end{equation}
for every
$\phi\in C_0^\infty(\INT\,\Omega)$,
then $u\in\Ww^{k+1,q}_{loc}(\Omega)$.

\item[\rm b)]
If $u\in L^1_{loc}(\Omega)$ and
$f,h\in\Ww^{k,q}_{loc}(\Omega)$ satisfy
\begin{equation}\label{eq:weak-divergence}
     \int_{\Omega}
     u\left(-\p_s\phi-\p_t\p_t\phi\right)
     =\int_{\Omega} f\phi
     -\int_{\Omega} h\,\p_t\phi
\end{equation}
for every $\phi\in C_0^\infty(\INT\,\Omega)$,
then $u$ and $\p_tu$ are in
$\Ww^{k,q}_{loc}(\Omega)$.
\end{itemize}
\end{theorem}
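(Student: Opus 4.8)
The plan is to reduce both statements to the \emph{homogeneous} parabolic Weyl lemma~\ref{le:parabolic-weyl} by subtracting off an explicit parabolic Newtonian potential built from the right-hand side. That potential is controlled by the fundamental $L^p$ estimate theorem~\ref{thm:kerD-CalZyg} (and the higher-order $L^q$ estimates obtained from it by differentiating and iterating), while the remainder solves the homogeneous weak heat equation and hence is automatically $C^\infty$. The point of routing the argument through lemma~\ref{le:parabolic-weyl} --- rather than running an a~priori estimate and a bootstrap directly on $u$ --- is that the potential already lies in some $\Ww^{k+1,q}_{loc}\subset L^q_{loc}\subset L^1_{loc}$, so the remainder falls into exactly the hypothesis class of lemma~\ref{le:parabolic-weyl}; in particular no preliminary upgrade of the bare $L^1_{loc}$ integrability of $u$ is needed. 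As both conclusions are local, I would fix an arbitrary compact set $Q\subset\Omega$ together with a cutoff $\chi\in C_0^\infty(\Omega,[0,1])$ that equals $1$ on a neighbourhood of $Q$, and prove the asserted regularity there.

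For part~a): the zero extension of $\chi f$ lies in $\Ww^{k,q}(\R^2)$ and has compact support, because by the Leibniz rule each $\p_s^a\p_t^b(\chi f)$ with $2a+b\le 2k$ is a finite sum of products of derivatives of $\chi$ with derivatives $\p_s^{a'}\p_t^{b'}f$ of anisotropic order $2a'+b'\le 2k$, all of which are in $L^q_{loc}$. With $\Phi$ the fundamental solution~(\ref{eq:fundamental-solution}), put $u_1:=\Phi*(\chi f)$; since $\Phi$ is supported in $\{s>0\}$ this parabolic convolution only ``looks backward in $s$'', so it is compatible with the half plane $\HH^-$ (the same feature underlies the validity of theorem~\ref{thm:kerD-CalZyg} on $\HH^-$). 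Then $\p_su_1-\p_t\p_tu_1=\chi f$, and approximating $\chi f$ in $\Ww^{k,q}$ by smooth compactly supported functions and applying theorem~\ref{thm:kerD-CalZyg} together with its higher-order consequences yields $u_1\in\Ww^{k+1,q}_{loc}$. Consequently $u_2:=u-u_1\in L^1_{loc}$ and $\int u_2(-\p_s\phi-\p_t\p_t\phi)=\int f\phi-\int(\chi f)\phi=0$ for every $\phi\in C_0^\infty$ supported where $\chi\equiv1$, so $u_2\in C^\infty$ there by lemma~\ref{le:parabolic-weyl}. Hence $u=u_1+u_2\in\Ww^{k+1,q}_{loc}$ near $Q$, and since $Q$ is arbitrary, $u\in\Ww^{k+1,q}_{loc}(\Omega)$.

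For part~b) I would apply part~a) twice. Using the construction above, produce $v_1=\Phi*(\chi f)$ and $v_2=\Phi*(\chi h)$ in $\Ww^{k+1,q}_{loc}$ with $\p_sv_i-\p_t\p_tv_i$ equal to $\chi f$ and $\chi h$ respectively, and set $\tilde u:=v_1+\p_tv_2$. Since $\p_t\phi$ is again an admissible test function, two integrations by parts in $t$ (valid because $v_2\in\Ww^{1,q}_{loc}$) show that $\tilde u$ satisfies the same divergence-form identity~(\ref{eq:weak-divergence}) as $u$ on the neighbourhood of $Q$; therefore $u-\tilde u$ solves the homogeneous weak heat equation and is $C^\infty$ by lemma~\ref{le:parabolic-weyl}. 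The claim then follows from the anisotropic bookkeeping ``$v\in\Ww^{k+1,q}\Rightarrow\p_tv,\p_t\p_tv\in\Ww^{k,q}$'' --- valid since $2\nu+(\mu+1)\le 2k+1$ and $2\nu+(\mu+2)\le 2k+2$ whenever $2\nu+\mu\le 2k$ --- which gives $u=v_1+\p_tv_2+C^\infty\in\Ww^{k,q}_{loc}$ and $\p_tu=\p_tv_1+\p_t\p_tv_2+C^\infty\in\Ww^{k,q}_{loc}$.

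The main obstacle is the single step $u_1=\Phi*(\chi f)\in\Ww^{k+1,q}_{loc}$: promoting the $C_0^\infty$ a~priori inequality of theorem~\ref{thm:kerD-CalZyg} to the statement that convolution with $\Phi$ maps compactly supported $\Ww^{k,q}$ data into $\Ww^{k+1,q}_{loc}$ and inverts $\p_s-\p_t\p_t$. This requires a careful approximation argument (the potential itself is neither smooth nor compactly supported); the use of $\p_s\Phi=\p_t\p_t\Phi$, so that the equation $\p_su_1=\p_t\p_tu_1+\chi f$ lets one trade each $s$-derivative of $u_1$ for two $t$-derivatives together with derivatives of $\chi f$ whose anisotropic order never exceeds $2k$, the surviving top $t$-derivatives of $u_1$ being precisely what theorem~\ref{thm:kerD-CalZyg} bounds; and a check that the cutoff and the forward-in-$s$ support of $\Phi$ are compatible with the geometry $\Omega\subset\HH^-$. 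The remaining ingredients --- the Leibniz estimates for $\chi f$, the integrations by parts in part~b), and the index arithmetic for the anisotropic spaces --- are routine.
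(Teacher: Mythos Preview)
Your strategy coincides with the paper's: build a particular solution of the inhomogeneous equation with $\Ww^{k+1,q}$ regularity, subtract it from $u$, and apply lemma~\ref{le:parabolic-weyl} to the difference (the paper's steps I--IV for part~a), V--VIII for part~b)). The only divergence is in how that particular solution is produced. The paper localizes to a parabolic rectangle $\Omega=(-r^2,0]\times(-r,r)$, invokes an existence theorem (\cite[Thm.~5.6]{LIEBERMAN}) to obtain \emph{compactly supported} smooth solutions $u_i\in C_0^\infty(\Omega)$ of $(\p_s-\p_t\p_t)u_i=f_i$ for an approximating sequence $f_i\to\beta f$, and then shows $(u_i)$ is Cauchy in $\Ww^{k+1,q}(\Omega)$ by combining theorem~\ref{thm:kerD-CalZyg} with Poincar\'{e}'s inequality (lemma~\ref{le:poincare-ineq}); Poincar\'{e} is available precisely because each $u_i$ vanishes at the lateral boundary of the rectangle, and it supplies the control of the lower-order terms that theorem~\ref{thm:kerD-CalZyg} does not give. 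Your explicit potential $u_1=\Phi*(\chi f)$ trades the black-box existence result for a formula, at the cost that $u_1$ is no longer compactly supported, so Poincar\'{e} is unavailable and the $L^q$ bound on the lower-order part of $u_1$ must instead come from Young's convolution inequality on bounded sets (using $\Phi\in L^1_{loc}$). Both routes land in the same place, and part~b) is handled identically: your $\tilde u=v_1+\p_tv_2$ is exactly the paper's $\hat w=\hat u+\p_t\hat v$.
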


Here $\INT\,\Omega$ denotes the interior of $\Omega$.
While part~b) is not needed in this text
it is used in~\cite{Joa-HEATFLOW}
to prove regularity of the solutions
of the \emph{linearized} heat equation.
For convenience of the reader
we recall Poincar\'{e}'s inequality and its proof.
It is used to prove
theorem~\ref{thm:local-regularity}
and theorem~\ref{thm:interior-estimate}.

\begin{lemma}[Poincar\'{e}'s inequality]
\label{le:poincare-ineq}
Fix constants $q\ge 1$ and $r>0$.
Then
$$
     \Norm{\varphi}_q\le 2r \Norm{\p_t\varphi}_q
$$
for every $\varphi\in C^\infty_0((-r,r))$.
\end{lemma}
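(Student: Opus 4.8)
The plan is to deduce the estimate from the fundamental theorem of calculus together with a single application of H\"older's inequality. First I would use that $\varphi\in C_0^\infty((-r,r))$ vanishes identically near the left endpoint, so that for every $t\in(-r,r)$ one may write
$$
     \varphi(t)=\int_{-r}^t\p_t\varphi(\tau)\,d\tau ,
$$
whence the crude pointwise bound $\Abs{\varphi(t)}\le\int_{-r}^r\Abs{\p_t\varphi(\tau)}\,d\tau$ holds uniformly in $t$.

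Next I would estimate $\int_{-r}^r\Abs{\p_t\varphi}$ by H\"older's inequality on the interval $(-r,r)$, whose length is $2r$, with conjugate exponent $q^\prime$ determined by $\frac1q+\frac1{q^\prime}=1$ (with the usual convention $q^\prime=\infty$ when $q=1$). This gives $\Abs{\varphi(t)}\le(2r)^{1/q^\prime}\Norm{\p_t\varphi}_q$ for all $t\in(-r,r)$. Raising this bound to the power $q$, integrating over $t\in(-r,r)$, and using $1+\frac{q}{q^\prime}=q\bigl(\frac1q+\frac1{q^\prime}\bigr)=q$ yields
$$
     \Norm{\varphi}_q^q
     \le 2r\cdot(2r)^{q/q^\prime}\Norm{\p_t\varphi}_q^q
     =(2r)^q\Norm{\p_t\varphi}_q^q ,
$$
and taking $q$-th roots finishes the proof.

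I expect no genuine obstacle here: the argument is entirely standard. The only points requiring a little care are the bookkeeping of the exponents through $\frac1q+\frac1{q^\prime}=1$, and the degenerate case $q=1$, in which $q^\prime=\infty$ and H\"older's inequality collapses to the trivial bound $\int_{-r}^r\Abs{\p_t\varphi}\le\Norm{\p_t\varphi}_1$; equivalently one may just read $(2r)^{1/q^\prime}=1$ in the general computation above.
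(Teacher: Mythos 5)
Your argument is identical to the paper's: the fundamental theorem of calculus to write $\varphi(t)=\int_{-r}^t\p_t\varphi$, one application of H\"older's inequality to bound $\abs{\varphi(t)}$ uniformly by $(2r)^{1/q'}\norm{\p_t\varphi}_q$, and a final integration over $(-r,r)$. The only cosmetic difference is that the paper writes $p$ for the conjugate exponent where you write $q'$; the exponent bookkeeping and the degenerate case $q=1$ are handled the same way.
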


\begin{proof}
For such $\varphi$ it holds that
$\varphi(-r)=0$ and hence
$
     \varphi(t)
     =\int_{-r}^t \p_t\varphi(\tau) \,d\tau
$
by the  fundamental theorem of calculus.
This implies that 
$$
     \Abs{\varphi(t)}
     \le \int_{-r}^t \Abs{\p_t\varphi(\tau)} \,d\tau
     \le \int_{-r}^r 1\cdot\Abs{\p_t\varphi(\tau)}
         \,d\tau
     \le (2r)^{1/p}\Norm{\p_t\varphi}_q
$$
where the last step uses H\"older's inequality
with $1/q+1/p=1$. Therefore
$$
     \Abs{\varphi(t)}^q
     \le (2r)^{q-1} \Norm{\p_t\varphi}_q^q
$$
and integration over $t\in(-r,r)$
concludes the proof of the lemma.
\end{proof}

\begin{proof}[Proof of 
theorem~\ref{thm:local-regularity} ]
Since any given compact subset $Q$
of $\Omega$ can be covered by
finitely many parabolic rectangles
whose closure is contained in $\Omega$,
we may assume without loss of generality
that $\Omega=(-r^2,0]\times(-r,r)$
for $r>0$.
\\
{\it ad a)} The proof consists of four steps.

I) Fix two open subsets 
$\Omega^\prime$ and $U$ of
$\Omega=(-r^2,0]\times(-r,r)$
such that the closure of $\Omega^\prime$
is contained in $U$
and the closure of $U$
is contained in $\Omega$.
Fix a smooth compactly
supported cutoff function
$\beta:\Omega\to[0,1]$ such that
$\beta=1$ on $U$.
Then $\beta f$ is compactly supported
and $\Ww^{k,q}$ integrable over $\Omega$.
Now approximate $\beta f$ 
in $\Ww^{k,q}(\Omega)$
through a sequence 
$(f_i)\subset C_0^\infty(\Omega)$, i.e.
$$
     \Norm{f_i-\beta f}_{\Ww^{k,q}(\Omega)}
     \longrightarrow 0, \qquad
     \text{as $i\to\infty$.}
$$

II) Each smooth problem
\begin{equation}\label{eq:smooth-problem}
     (\p_s-\p_t\p_t)u_i=f_i
\end{equation}
with 
$f_i\in  C_0^\infty(\Omega)$
admits a unique solution
$u_i\in C_0^\infty(\Omega)$;
see e.g.~\cite[Thm.~5.6]{LIEBERMAN}.
We prove below that the sequence of solutions $u_i$
is a Cauchy sequence in $\Ww^{k+1,q}(\Omega)$.
Therefore it admits a unique limit
$\hat u \in\Ww^{k+1,q}(\Omega)$.
Now the limit $\hat u$ solves
the identity
$(\p_s-\p_t\p_t)\hat u=\beta f$ 
almost everywhere on $\Omega$
as can be seen as follows:
The sequence
$\p_su_i-\p_t\p_tu_i$ converges to
$\p_s\hat u-\p_t\p_t\hat u$
in $L^q$, since $u_i$
is a Cauchy sequence in $\Ww^{k+1,q}(\Omega)$,
and the sequence
$f_i$ converges to $\beta f$ by step~I).
Uniqueness of the limit then
proves equality in $L^q(\Omega)$.
\\
It remains to prove
that the sequence $u_i$ is Cauchy. 
All norms are with respect to the domain $\Omega$.
Note that
$$
     \Norm{u_i-u_j}_q
     \le 2r \Norm{\p_t(u_i-u_j)}_q
     \le (2r)^2 
         \Norm{\p_t\p_t(u_i-u_j)}_q
$$
Here the first inequality follows
by integrating
Poincar\'{e}'s inequality
(lemma~\ref{le:poincare-ineq})
for $\varphi(t)=u_i(s,t)-u_j(s,t)$
over $s\in(-r^2,0)$.
The second inequality follows similarly.
Now use equation~(\ref{eq:smooth-problem})
to obtain that
$$
     \Norm{u_i-u_j}_q
     \le (2r)^2 \left(
     \Norm{\p_s(u_i-u_j)}_q
     +\Norm{f_i-f_j}_q
     \right).
$$
More generally, there is a constant
$C=C(k,r)$ such that
$$
     \Norm{u_i-u_j}_{\Ww^{k+1,q}}
     \le C \left(
     \Norm{\p_s^{k+1}(u_i-u_j)}_q
     +\Norm{f_i-f_j}_{\Ww^{k,q}}
     \right).
$$
for all $i$ and $j$.
This follows by inspecting
the left hand side term by term
replacing any two $t$-derivatives
by one $s$-derivative and the error term $f_i$
according to equation~(\ref{eq:smooth-problem}).
If an odd number of $t$-derivatives
appears then use lemma~\ref{le:poincare-ineq}
to obtain an even number.
Now the fundamental $L^p$ estimate
theorem~\ref{thm:kerD-CalZyg}
with constant $c=c(q)$
and function $v=\p_s^k(u_i-u_j)$
asserts that
\begin{equation*}
\begin{split}
     \norm{\p_s^{k+1}(u_i-u_j)}_q
    &\le c \norm{(\p_s-\p_t\p_t)\p_s^k(u_i-u_j)}_q\\
    &= c \norm{\p_s^k(f_i-f_j)}_q\\
    &\le c \norm{f_i-f_j}_{\Ww^{k,q}}.
\end{split}
\end{equation*}
Here we used again equation~(\ref{eq:smooth-problem}).
Next use the approximation of $\beta f$ in
step~I) to obtain that the sequence
$u_i$ in $\Ww^{k,q}(\Omega)$
is Cauchy, namely
$$
     \norm{f_i-f_j}_{\Ww^{k,q}}
     \le \norm{f_i-\beta f}_{\Ww^{k,q}}
     +\norm{\beta f-f_j}_{\Ww^{k,q}}
     \longrightarrow 0,
     \qquad \text{as $i,j\to\infty$}.
$$

III) The restriction of
$\hat u - u$ to the open subset $U\subset \Omega$
is a weak solution of the
homogeneous problem. More precisely, it is true that
\begin{equation*}
\begin{split}
     \int_U(\hat u-u)
     (-\p_s\phi-\p_t\p_t\phi)
    &=\int_U(\p_s\hat u-\p_t\p_t\hat u)\phi
     -\int_U u(-\p_s\phi-\p_t\p_t\phi)\\
    &=\int_U(\p_s\hat u-\p_t\p_t\hat u-\beta f)\phi\\
    &=0
\end{split}
\end{equation*}
for every test function 
$\phi\in C_0^\infty(\INT\,U)$.
Here the first step is by
integration by parts using step~II)
and the second step is by assumption~(\ref{eq:weak})
and the fact that $f=\beta f$ on $U$.
The last step uses the identity in step~II).

IV) The difference $\hat u-u$ is in $L^1(U)$
by step~II) and assumption on $u$.
Hence by the parabolic Weyl 
lemma~\ref{le:parabolic-weyl}
the function $F:=\hat u-u$ is smooth
on $U$. Together with the fact that
$\hat u \in\Ww^{k+1,q}(\Omega)$
proved in step~II) this shows that
$u=\hat u-F$ is of class
$\Ww^{k+1,q}$ on each
bounded open subset of $U$,
in particular on $\Omega^\prime$.
This proves part~a) of
theorem~\ref{thm:local-regularity}.

\vspace{.1cm}
\noindent
{\it ad b)} The proof takes four further steps.

V) Let the sets $\Omega^\prime$ and $U$,
the cutoff function $\beta$,
and the sequence $(f_i)\subset C^\infty_0(\Omega)$
be as in step~I).
Approximate the compactly supported function
$\beta h$ in $\Ww^{k,q}(\Omega)$
through a sequence $(h_i)\subset C^\infty_0(\Omega)$.
Now as in steps~II) and~III)
each smooth problem
\begin{equation}\label{eq:smooth-problem2}
     (\p_s-\p_t\p_t)v_i=h_i
\end{equation}
admits a unique solution 
$v_i\in C_0^\infty(\Omega)$
and the sequence $(v_i)$ is Cauchy
in $\Ww^{k+1,q}(\Omega)$ with unique
limit $\hat v$ which solves
the identity $(\p_s-\p_t\p_t)\hat v=\beta h$
almost everywhere on $\Omega$.

VI) Observe that the
sequences 
$$
     w_i:=u_i+\p_t v_i,\qquad
     \p_tw_i=\p_tu_i+\p_t\p_t v_i,
$$
converge in $\Ww^{k,q}(\Omega)$
to the limits 
$$
     \hat w=\hat u+\p_t\hat v,\qquad
     \p_t\hat w=\p_t\hat u+\p_t\p_t\hat v,
$$
respectively.
Moreover, each $w_i$
satisfies the identity
$(\p_s-\p_t\p_t)w_i=f_i+\p_th_i$ on $\Omega$.
Integration by parts then shows that
$$
     \int_\Omega w_i\left(-\p_s-\p_t\p_t\right) \phi
     =\int_\Omega f_i \phi
     -\int_\Omega h_i \p_t\phi
$$
for every $\phi\in C^\infty_0(\INT\,\Omega)$.
Taking the limit $i\to\infty$
we obtain that
\begin{equation}\label{eq:hat-w}
     \int_\Omega \hat w \left(-\p_s-\p_t\p_t\right) \phi
     =\int_\Omega \beta f \phi
     -\int_\Omega \beta h \,\p_t\phi
\end{equation}
for every $\phi\in C^\infty_0(\INT\,\Omega)$.

VII) The restriction
of $\hat w-u$ to the open subset $U$ of $\Omega$
is a weak solution of the homogeneous problem,
meaning that
\begin{equation*}
\begin{split}
     \int_U(\hat w-u)
     (-\p_s\phi-\p_t\p_t\phi)
    &=\int_U\hat w (-\p_s-\p_t\p_t)\phi
     -\int_U u(-\p_s\phi-\p_t\p_t\phi)\\
    &=\int_U\left(\beta f\phi-\beta h\,\p_t\phi\right)
      -\int_U\left(f\phi-h\,\p_t\phi\right)\\
    &=0
\end{split}
\end{equation*}
for every test function 
$\phi\in C_0^\infty(\INT\,U)$.
Here step two uses the
identity~(\ref{eq:hat-w}) for $\hat w$
and assumption~(\ref{eq:weak-divergence}) on $u$.
Step three is true since
$\beta= 1$ on $U$.

VIII) Note that the difference
$\hat w-u$ is in $L^1(U)$ by step~VI)
and assumption on $u$.
Hence by the parabolic Weyl 
lemma~\ref{le:parabolic-weyl}
the function $G:=\hat w-u$ is smooth
on $U$. Since $\hat w \in\Ww^{k,q}(\Omega)$
by step~VI), this shows that
$u=\hat w-G$ is of class
$\Ww^{k,q}$ on each
bounded open subset of $U$.
Since also $\p_t\hat w \in\Ww^{k,q}(\Omega)$
by step~VI), the function
$\p_tu=\p_t\hat w-\p_tG$ is of class
$\Ww^{k,q}$ on each
bounded open subset of $U$,
in particular on $\Omega^\prime$.
This concludes the proof of
theorem~\ref{thm:local-regularity}.
\end{proof}

\subsection*{Interior estimates}
\label{subsec:interior-estimates}

\begin{theorem}[Interior estimates]
\label{thm:interior-estimate}
Fix an integer $k\ge 0$
and constants $1<q<\infty$ and $0<r<R$.
Define
$\Omega_r=(-r^2,0]\times(-r,r)$.
Then there is a constant
$c=c(k,q,R-r)$ such that
\begin{equation}\label{eq:interior-estimate}
     \Norm{u}_{\Ww^{k+1,q}(\Omega_r)}
     \le c\left(
     \Norm{\p_su-\p_t\p_tu}_{\Ww^{k,q}(\Omega_R)}
     +\Norm{u}_{L^q(\Omega_R)}
     +\Norm{\p_tu}_{L^q(\Omega_R)}
     \right)
\end{equation}
for every $u\in C^\infty(\overline{\Omega_R})$.
\end{theorem}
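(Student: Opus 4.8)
The plan is to prove~(\ref{eq:interior-estimate}) by induction on $k$, using the fundamental $L^p$ estimate theorem~\ref{thm:kerD-CalZyg} as the only nontrivial analytic ingredient and otherwise only cut-off functions, the equation in the form $\p_t\p_tu=\p_su-g$ (with $g:=\p_su-\p_t\p_tu$), the one-dimensional interpolation inequality $\Norm{\p_t\psi}_{L^q(I)}\le\eps\Norm{\p_t\p_t\psi}_{L^q(I)}+C(\eps,|I|)\Norm{\psi}_{L^q(I)}$ applied on $t$-slices, and the standard iteration lemma (a bounded $\Psi\colon[r,R]\to[0,\infty)$ satisfying $\Psi(a)\le\theta\Psi(b)+A(b-a)^{-N}$ for some $\theta\in[0,1)$ and all $r\le a<b\le R$ obeys $\Psi(r)\le c(\theta,N)A(R-r)^{-N}$). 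All constants may depend on $k$, $q$ and on the radii only through their differences. For the base case $k=0$ I would choose a product cut-off $\beta(s,t)=\alpha(s)\gamma(t)$ with $\beta\equiv1$ on $\overline{\Omega_r}$ and $\supp\beta$ a compact subset of $(-R^2,0]\times(-R,R)$, so that $\beta$ may equal $1$ up to $s=0$ and $\beta u\in C_0^\infty(\HH^-)$, with $\Norm{\beta}_{C^2}\le C(R-r)^{-2}$. Applying theorem~\ref{thm:kerD-CalZyg} on $\HH^-$ to $v=\beta u$ and expanding $(\p_s-\p_t\p_t)(\beta u)=\beta g+(\p_s\beta-\p_t\p_t\beta)u-2(\p_t\beta)\p_tu$ bounds $\Norm{\p_s(\beta u)}_q+\Norm{\p_t\p_t(\beta u)}_q$ by $C\bigl(\Norm{g}_{L^q(\Omega_R)}+(R-r)^{-2}\Norm{u}_{L^q(\Omega_R)}+(R-r)^{-2}\Norm{\p_tu}_{L^q(\Omega_R)}\bigr)$; since $\beta\equiv1$ on $\Omega_r$ the left side dominates $\Norm{\p_su}_{L^q(\Omega_r)}+\Norm{\p_t\p_tu}_{L^q(\Omega_r)}$, and the remaining two summands of $\Norm{u}_{\Ww^{1,q}(\Omega_r)}$ are dominated by the corresponding norms over $\Omega_R$.

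For the inductive step, assume~(\ref{eq:interior-estimate}) holds at all levels $<k$, for every pair of radii. I would first reduce the level-$k$ estimate to the single bound
\[
     \Norm{\p_s^{k+1}u}_{L^q(\Omega_a)}\le c\,M_b,\qquad
     M_b:=\Norm{g}_{\Ww^{k,q}(\Omega_b)}+\Norm{u}_{L^q(\Omega_b)}+\Norm{\p_tu}_{L^q(\Omega_b)},\quad 0<a<b.
\]
Indeed, by $\p_t\p_tu=\p_su-g$ every term $\p_s^\nu\p_t^\mu u$ in $\Norm{u}_{\Ww^{k+1,q}(\Omega_r)}$ with $\mu\ge2$ equals $\p_s^{\nu+1}\p_t^{\mu-2}u-\p_s^\nu\p_t^{\mu-2}g$, where the $g$-term has parabolic weight $\le2k$ and the $u$-term has the same weight but fewer $t$-derivatives; iterating, $\Norm{u}_{\Ww^{k+1,q}(\Omega_r)}$ is controlled by $\Norm{g}_{\Ww^{k,q}(\Omega_R)}$ plus $\Norm{\p_s^ju}_{L^q}$ for $j\le k+1$ and $\Norm{\p_s^j\p_tu}_{L^q}$ for $j\le k$. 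All of these save $\Norm{\p_s^{k+1}u}_{L^q}$ and $\Norm{\p_s^k\p_tu}_{L^q}$ have parabolic weight $\le2k$, hence are bounded on a slightly larger domain by $\Norm{u}_{\Ww^{k,q}}$ and thus, via the level-$(k-1)$ hypothesis, by $M_R$; the term $\Norm{\p_s^k\p_tu}_{L^q}$ is handled by the interpolation inequality with $\psi=\p_s^ku$ and $\p_t\p_t\p_s^ku=\p_s^{k+1}u-\p_s^kg$, which bounds it by $\eps\Norm{\p_s^{k+1}u}_{L^q(\Omega_{R'})}+C_\eps M_R$, and $\Norm{\p_s^{k+1}u}_{L^q(\Omega_{R'})}$ is covered by the displayed bound with $R'<R$. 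To prove the displayed bound, fix $0<a<b$, set $a':=\tfrac{a+b}{2}$, and choose $\beta$ with $\beta\equiv1$ on $\overline{\Omega_a}$, $\supp\beta\subset\Omega_{a'}$ and $\Norm{\beta}_{C^N}\le C(b-a)^{-N}$ for a suitable $N=N(k)$; applying theorem~\ref{thm:kerD-CalZyg} on $\HH^-$ to $v=\p_s^k(\beta u)\in C_0^\infty(\HH^-)$ and expanding $\p_s^k(\p_s-\p_t\p_t)(\beta u)$ by the Leibniz rule produces $\p_s^k(\beta g)$ (bounded by $C(b-a)^{-N}\Norm{g}_{\Ww^{k,q}(\Omega_b)}$), terms involving $u$-derivatives of parabolic weight $\le2k$ (bounded via level $k-1$ by $C(b-a)^{-N}M_b$), and the single weight-$(2k+1)$ term $-2(\p_t\beta)\p_s^k\p_tu$, which the same interpolation step bounds by $C\eps(b-a)^{-1}\Norm{\p_s^{k+1}u}_{L^q(\Omega_{a'})}+C_\eps(b-a)^{-N}M_b$.

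Writing $\Psi(a):=\Norm{\p_s^{k+1}u}_{L^q(\Omega_a)}$ and using $a'<b$, the previous paragraph yields
\[
     \Psi(a)\le C\eps(b-a)^{-1}\,\Psi(b)+C_\eps(b-a)^{-N}M_R,\qquad r\le a<b\le R ,
\]
so choosing $\eps:=(b-a)/(2C)$, whence $C_\eps$ contributes only further negative powers of $b-a$, puts this into the form $\Psi(a)\le\tfrac12\Psi(b)+A(b-a)^{-N'}$; the iteration lemma then gives $\Psi(r)\le cA(R-r)^{-N'}$, i.e.\ the displayed bound with $(a,b)=(r,R)$, completing the induction. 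I expect the crux to be precisely this last manoeuvre: one must arrange the chain of intermediate radii and the size of $\eps$ so that, after the cut-off derivatives have produced their negative powers of $b-a$, the coefficient of the reappearing top-order term $\Psi(b)$ is genuinely $<1$, allowing the iteration lemma to close the estimate; keeping track of parabolic weights so that every auxiliary term really falls under the level-$(k-1)$ hypothesis is routine but must be done with care.
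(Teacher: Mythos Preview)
Your argument is correct, and the base case matches the paper's almost exactly. The induction step, however, takes a genuinely different route.

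The paper's induction step is considerably shorter and avoids the iteration lemma entirely. Instead of isolating the single top-order term $\Norm{\p_s^{k+1}u}_{L^q}$, the paper observes that $\Norm{u}_{\Ww^{k+1,q}(\Omega_r)}$ is controlled by the $\Ww^{k,q}(\Omega_r)$ norms of $u$, $\p_tu$, $\p_t\p_tu$, and $\p_su$, and then applies the induction hypothesis at level $k-1$ directly to each of these functions on a finite chain of nested domains $\Omega_r\subset\Omega_{r_1}\subset\Omega_{r_2}\subset\Omega_R$. The key point is that the heat operator commutes with $\p_s$ and $\p_t$, so for $v=\p_su$ (say) one has $(\p_s-\p_t\p_t)v=\p_sg$, whose $\Ww^{k-1,q}$ norm is dominated by $\Norm{g}_{\Ww^{k,q}}$. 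The remaining lower-order terms $\Norm{v}_{L^q}$ and $\Norm{\p_tv}_{L^q}$ on the right-hand side are then handled by two applications of the base case on the inner pairs of domains. No top-order term ever reappears on the right, so no absorption is needed.

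Your approach---reducing to $\Norm{\p_s^{k+1}u}_{L^q}$, applying theorem~\ref{thm:kerD-CalZyg} to $\p_s^k(\beta u)$, and then absorbing the weight-$(2k+1)$ commutator term $-2(\p_t\beta)\p_s^k\p_tu$ via interpolation and the iteration lemma---is a standard Schauder-type manoeuvre and works fine here, but it is more machinery than the problem requires. What your route buys is robustness: it would still go through if the operator did not commute cleanly with $\p_s$ and $\p_t$ (e.g.\ variable coefficients), whereas the paper's shortcut exploits precisely that commutation.
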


\begin{proof}
The proof is by induction on $k$.

\noindent
{\it Case $k=0$.}
Fix a smooth compactly supported cutoff function
$\beta:\Omega_R\to[0,1]$
such that $\beta=1$ on $\Omega_r$.
Then
\begin{equation*}
\begin{split}
    &\Norm{u}_{\Ww^{1,q}(\Omega_r)}\\
    &\le \Norm{\beta u}_{L^q(\Omega_R)}
     +\Norm{\p_t\left(\beta u\right)}_{L^q(\Omega_R)}
     +\Norm{\p_t\p_t\left(\beta u\right)}_{L^q(\Omega_R)}
     +\Norm{\p_s\left(\beta u\right)}_{L^q(\Omega_R)}\\
    &\le 2R(1+2R)
     \Norm{\p_t\p_t\left(\beta u\right)}_{L^q(\Omega_R)}
     +\Norm{\p_s\left(\beta u\right)}_{L^q(\Omega_R)}\\
    &\le c \Norm{(\p_s-\p_t\p_t)\beta u}_{L^q(\Omega_R)}\\
    &\le c \Norm{(\p_s-\p_t\p_t)u}_{L^q(\Omega_R)}
     +C\left(\Norm{u}_{L^q(\Omega_R)}
     +\Norm{\p_tu}_{L^q(\Omega_R)}\right)
\end{split}
\end{equation*}
where
$c=c_q\left(1+2R(1+2R)\right)$ with
$c_q$ being the constant in 
theorem~\ref{thm:kerD-CalZyg}
and 
$$
     C=\Norm{\p_s\beta}_\infty
     +\Norm{\p_t\p_t\beta}_\infty
     +2\Norm{\p_t\beta}_\infty.
$$
The first step uses the fact
that $\beta=1$ on $\Omega_r$,
the definition
of the $\Ww^{1,q}$ norm, and monotonicity
of the integral.
To obtain step two we fixed $s$ and applied
Poincar\'{e}'s inequality
lemma~\ref{le:poincare-ineq}
to the functions $\beta u,
\p_t(\beta u)\in C_0^\infty(-R,R)$,
then we integrated over $s\in(-R^2,0]$.
Step three is by theorem~\ref{thm:kerD-CalZyg}.

\vspace{.05cm}
\noindent
{\it Induction step $k-1\Rightarrow k$.}
Fix $k\ge 1$.
It suffices to estimate the $\Ww^{k+1,q}$
norms of $u$, $\p_tu$, $\p_t\p_tu$, and $\p_su$ 
individually by the right hand side
of~(\ref{eq:interior-estimate}).
We provide details for the least trivial term
and leave the others as an exercise.
Fix constants $r<r_1<r_2<R$.
Then by the induction hypothesis in the case $k-1$
for the pair of sets $\Omega_r\subset\Omega_{r_1}$
and the function $v=\p_su$ we obtain that
\begin{equation*}
\begin{split}
    &\Norm{\p_su}_{\Ww^{k,q}(\Omega_r)}\\
    &\le c_1\left(
     \Norm{(\p_s-\p_t\p_t)\p_su}_{\Ww^{k-1,q}(\Omega_{r_1})}
     +\Norm{\p_su}_{L^q(\Omega_{r_1})}
     +\Norm{\p_t\p_su}_{L^q(\Omega_{r_1})}
     \right)\\
    &\le c_1\left(
     \Norm{(\p_su-\p_t\p_t)u}_{\Ww^{k,q}(\Omega_R)}
     +\Norm{u}_{\Ww^{1,q}(\Omega_{r_1})}
     +\Norm{\p_tu}_{\Ww^{1,q}(\Omega_{r_1})}
     \right)
\end{split}
\end{equation*}
for some constant $c_1=c_1(k-1,q,r_1-r)$.
To deal with the last term in the sum
we apply the case $k=0$
for the pair of sets $\Omega_{r_1}\subset\Omega_{r_2}$
and the function $v=\p_tu$ to obtain that
\begin{equation*}
\begin{split}
    &\Norm{\p_tu}_{\Ww^{1,q}(\Omega_{r_1})}\\
    &\le c_2\left(
     \Norm{(\p_s-\p_t\p_t)\p_tu}_{L^q(\Omega_{r_2})}
     +\Norm{\p_tu}_{L^q(\Omega_{r_2})}
     +\Norm{\p_t\p_tu}_{L^q(\Omega_{r_2})}
     \right)\\
    &\le c_2\left(
     \Norm{(\p_su-\p_t\p_t)u}_{\Ww^{k,q}(\Omega_R)}
     +\Norm{\p_tu}_{L^q(\Omega_R)}
     +\Norm{u}_{\Ww^{1,q}(\Omega_{r_2})}
     \right)
\end{split}
\end{equation*}
for some constant $c_2=c_2(q,r_2-r_1)$.
It remains to estimate the last term in the sum.
We apply again the case $k=0$, but now
for the pair of sets $\Omega_{r_2}\subset\Omega_R$
and the function $u$ to obtain that
$$
     \Norm{u}_{\Ww^{1,q}(\Omega_{r_2})}
     \le c_3\left(
     \Norm{(\p_s-\p_t\p_t)u}_{L^q(\Omega_R)}
     +\Norm{u}_{L^q(\Omega_R)}
     +\Norm{\p_tu}_{L^q(\Omega_R)}
     \right)
$$
for some constant $c_3=c_3(q,R-r_2)$.
This proves theorem~\ref{thm:interior-estimate}.
\end{proof}

\begin{proof}[Proof of 
theorem~\ref{thm:interior-regularity}]
a) Suppose the
parabolic rectangle
$\Omega=(\sigma-r^2,\sigma]\times(\tau-r,\tau+r)$
is contained in
the cylinder $Z_T=(-T,0]\times S^1$.
Then the assumptions
of theorem~\ref{thm:local-regularity}~a)
are satisfied for
the restrictions of $u$ and $f$
to $\Omega$ and therefore
$u\in\Ww^{k+1,q}_{loc}(\Omega)$.
Now every compact subset of $Z_T$
can be covered by finitely many
parabolic rectangles.
Hence $u$ is locally $\Ww^{k+1,q}$
integrable on $Z_T$.

b) Induction over $k$
based on theorem~\ref{thm:interior-estimate}
and a covering argument by parabolic rectangles
proves~b).
\end{proof}

\section{Parabolic bootstrapping}
\label{sec:bootstrapping}

In this section
we establish
uniform Sobolev bounds for strong
solutions $u$ of the heat
equation~(\ref{eq:heat-local-F})
by parabolic bootstrapping.
This immediately implies
theorem~\ref{thm:bootstrap}.
In order to deal with
the heat equation's
quadratic nonlinearity
in $\p_tu$
we first prove in lemma~\ref{le:bootstrap}
apriori continuity of $\p_tu$.
Then the heat equation
can be treated like a linear equation
in the crucial first step $\ell=1$
of the parabolic bootstrap.

In this section we fix
a closed smooth submanifold
$M\hookrightarrow\R^N$
and a smooth family of
vector-valued symmetric bilinear forms
$\Gamma:M\to\R^{N\times N\times N}$.
Recall that the cylinders $Z=Z_T$
and $Z^\prime=Z_{T^\prime}$
are defined by~(\ref{eq:Z}).

\begin{lemma}[Apriori continuity of $\p_tu$]
\label{le:bootstrap}
Fix constants
$p>2$, $\mu_0>1$, and $T>0$.
Fix a map $F:Z\to\R^N$ such that
$F$ and $\p_tF$ are of class $L^p$.
Assume that $u:Z\to\R^N$ is a $\Ww^{1,p}$
map taking values in $M$
with $\norm{u}_{\Ww^{1,p}}\le\mu_0$ and
such that the perturbed heat equation
\begin{equation}\label{eq:heat-local-F}
     \p_su-\p_t\p_tu
     =\Gamma(u)\left(\p_tu,\p_tu\right)
     +F
\end{equation}
is satisfied almost everywhere.
Then $\p_tu$ is continuous.
More precisely, for every
$T^\prime\in(0,T)$
there is a constant
$c=c(p,\mu_0,T,T^\prime,\norm{\Gamma}_{C^1})$
such that
$$
     \Norm{\p_tu}_{C^0(Z^\prime)}
     \le c\left(1+\Norm{\p_tF}_{L^p(Z)}\right).
$$
\end{lemma}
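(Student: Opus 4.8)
The plan is to control the quadratic nonlinearity $\Gamma(u)(\p_tu,\p_tu)$ with theorem~\ref{thm:product-estimate} and then run a short parabolic bootstrap off theorem~\ref{thm:interior-regularity}. All the statements invoked being local, I would work throughout on a decreasing chain of cylinders $Z^\prime=Z_{T^\prime}\subset\dots\subset Z_{T_1}\subset Z=Z_T$, using smooth cutoff functions of $s$ only to pass between consecutive pairs, and move the estimates down this chain without further comment; the goal is a bound of the form $c\,(1+\Norm{\p_tF}_{L^p(Z)})$ with $c$ allowed to depend on $p,\mu_0,T,T^\prime,\Norm{\Gamma}_{C^1}$.

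First I would apply theorem~\ref{thm:product-estimate} to $v=w=\beta u$, where $\beta$ is a cutoff equal to $1$ on $Z_{T_1}$. Since $\Norm{\p_tu}_{L^{2p}(Z_{T_1})}^2=\Norm{\abs{\p_tu}^2}_{L^p(Z_{T_1})}$ and $\Norm{\beta u}_{\Ww^{1,p}}\le C(\beta)\mu_0$, this yields
$$
     \Norm{\p_tu}_{L^{2p}(Z_{T_1})}\le C(p,\mu_0),\qquad
     \Norm{\,\Gamma(u)(\p_tu,\p_tu)\,}_{L^p(Z_{T_1})}\le C(p,\mu_0,\Norm{\Gamma}_{C^0}).
$$
The upgrade of $\p_tu$ from $L^p$ to $L^{2p}$ is the key point. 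Next, since $u\in\Ww^{1,p}$ solves~(\ref{eq:heat-local-F}) almost everywhere, testing against $\psi=-\p_t\phi$ for $\phi\in C_0^\infty((-T_1,0)\times S^1)$ and integrating by parts in $t$ (using that $F$ has weak derivative $\p_tF\in L^p$) shows that $v:=\p_tu$ satisfies
$$
     \int v\left(-\p_s\phi-\p_t\p_t\phi\right)=\int(\p_tF)\,\phi-\int H\,\p_t\phi,\qquad H:=\Gamma(u)(\p_tu,\p_tu),
$$
for all such $\phi$. As $M$ is compact, $\Gamma(u),D\Gamma(u)\in L^\infty$, so by the chain and product rules for Sobolev functions $H$ has weak $t$-derivative $\p_tH=D\Gamma(u)(\p_tu)(\p_tu,\p_tu)+2\,\Gamma(u)(\p_t\p_tu,\p_tu)\in L^1_{loc}$; hence $v$ is a weak solution, in the sense of part~a) of theorem~\ref{thm:interior-regularity}, of $\p_sv-\p_t\p_tv=\p_tF+\p_tH$ on $Z_{T_1}$.

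Now the bootstrap. Assume $\p_tu\in L^q_{loc}(Z_{T_1})$ with $q\ge 2p$; this holds for $q=2p$ by the previous step. Combining with $\p_t\p_tu\in L^p$ and $\Norm{\Gamma}_{C^1}<\infty$, the right hand side $\p_tF+\p_tH$ is a genuine $L^r_{loc}(Z_{T_1})$ function with $\frac1r=\frac1p+\frac1q$ (and $r<p$, so $\p_tF$ belongs there too). Theorem~\ref{thm:interior-regularity} then gives $v=\p_tu\in\Ww^{1,r}_{loc}$, and the interior estimate~\ref{thm:interior-estimate} (applied to $v$ after the standard smooth approximation on nested cylinders) bounds $\Norm{v}_{\Ww^{1,r}}$ on the next cylinder by $c\,(\Norm{\p_tF+\p_tH}_{L^r}+\Norm{v}_{L^r})\le c^\prime(1+\Norm{\p_tF}_{L^p(Z)})$. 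By the parabolic Sobolev embedding (space-time dimension $1+2=3$; cf.~\cite{LIEBERMAN}), $\Ww^{1,r}_{loc}\hookrightarrow C^0_{loc}$ if $r>\tfrac32$, while $\Ww^{1,r}_{loc}\hookrightarrow L^{q^\prime}_{loc}$ with $\frac1{q^\prime}=\frac1r-\frac23$ if $r\le\tfrac32$. In the first case the lemma follows; in the second, $\frac1{q^\prime}=\frac1q-(\frac23-\frac1p)$, so $1/q$ decreases by the fixed positive amount $\frac23-\frac1p>\frac16$ --- this is where $p>2$ enters --- and after at most two repetitions $r$ exceeds $\tfrac32$. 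This gives $\p_tu\in C^0(Z^\prime)$ with $\Norm{\p_tu}_{C^0(Z^\prime)}\le c\,(1+\Norm{\p_tF}_{L^p(Z)})$.

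The essential obstacle is the circularity one meets without the product estimate: differentiating~(\ref{eq:heat-local-F}) in $t$ produces the terms $\abs{\p_tu}^3$ and $\abs{\p_t\p_tu}\,\abs{\p_tu}$, which from $\p_tu,\p_t\p_tu\in L^p$ alone lie only in $L^{p/3}$ and $L^{p/2}$ respectively --- useless for $p\le3$, hence throughout the range $p>2$ of interest. Theorem~\ref{thm:product-estimate} breaks this by handing us $\p_tu\in L^{2p}$ essentially for free, which is exactly enough to enter, and after one step to continue, the induction. The remaining points are routine but would need care: the distributional identity for $v$ and the verification that $\p_tH$ is a genuine $L^r$ function (chain and product rules, compactness of $M$ to bound $\Gamma(u)$ and $D\Gamma(u)$), the parabolic Sobolev embedding that makes the induction terminate, and the bookkeeping of cutoffs and shrinking cylinders that turns the qualitative conclusions into the stated quantitative bound.
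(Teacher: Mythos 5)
Your proof is correct in substance and reaches the same conclusion, but the route through the bootstrap is genuinely different from the paper's. The paper starts the iteration from the crude information $\p_tu\in L^{q_1}$ with $q_1=p$, and at \emph{each} step of the iteration invokes the product estimate (theorem~\ref{thm:product-estimate}) to control the cubic term: writing $|d\Gamma(u)(\p_tu,\p_tu,\p_tu)|\le \norm{d\Gamma}_\infty\,|\p_tu|^2\cdot|\p_tu|$, it bounds the factor $|\p_tu|^2$ in $L^p$ by the product estimate and the remaining $\p_tu$ in $L^{q_k}$, landing $\p_th$ in $L^{\chi}$ with $1/\chi=1/p+1/q_k$. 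It then uses the \emph{standard} two-dimensional Sobolev embedding $W^{1,\chi}\hookrightarrow L^{2\chi/(2-\chi)}$ (not the parabolic one), and must prove that the resulting recursion $q_{k+1}=2pq_k/(2p+2q_k-pq_k)$ tends to infinity, which is where $p>2$ enters. You instead burn the product estimate \emph{once}, up front, to obtain $\p_tu\in L^{2p}$ (taking $v=w=\beta u$); after that the cubic term is handled by pure H\"older, with the invariant $q\ge 2p$ guaranteeing $|\p_tu|^3\in L^{q/3}\subset L^r$ where $1/r=1/p+1/q$. Coupled with the sharper \emph{parabolic} Sobolev embedding ($\Ww^{1,r}\hookrightarrow C^0$ for $r>3/2$, $\Ww^{1,r}\hookrightarrow L^{q'}$ with $1/q'=1/r-2/3$ otherwise), your recursion terminates in at most two steps, which is tidier than the paper's open-ended iteration. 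Both uses of the product estimate serve the same purpose --- breaking the circularity at the cubic term for $2<p\le3$ --- but they are applied at different points of the argument, and the two proofs lean on different Sobolev embeddings. Two small caveats: at $p=9/4$ your first exponent is $r_0=2p/3=3/2$ exactly, where the parabolic embedding into an $L^{q'}$ with finite $q'$ is borderline; this is easily repaired by replacing $p$ with a slightly smaller $p'\in(2,p)$ (or noting $\Ww^{1,p}\subset\Ww^{1,p'}$ on the bounded $Z$), exactly as the paper does at its own critical value $p=4$. And the passage from theorem~\ref{thm:interior-estimate}, stated for $C^\infty$ functions, to the a~priori merely Sobolev $\p_tu$ requires the density argument you gesture at; the paper has the identical gap and handles it the same way.
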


Note that by the Sobolev embedding
theorem the assumption $p>2$ guarantees
that the $\Ww^{1,p}$ map $u$
is continuous. Hence it makes sense
to specify that $u$ takes values in
the submanifold $M$ of $\R^N$.
Abbreviate
$\Ww^{k,p}(Z)=\Ww^{k,p}(Z,\R^N)$.

\begin{remark}\label{rmk:unlikely}
Since the proof of
lemma~\ref{le:bootstrap}
relies heavily on
the product estimate
theorem~\ref{thm:product-estimate}
it seems unlikely that
the assumption $u\in\Ww^{1,p}$
can be weakened to $u\in W^{1,p}$ --
unless we also replace
the assumption $p>2$ by $p>3$.
\end{remark}

\begin{proposition}\label{prop:bootstrap}
Under the assumptions of
lemma~\ref{le:bootstrap}
the following is true
for every integer $k\ge1$ such that
$F$ and $\p_tF$ are in $\Ww^{k-1,p}(Z)$
and every constant 
$T^\prime\in(0,T)$.
\begin{enumerate}

\item[\rm(i)]
There is a constant
$a_k$ depending on $p$, $\mu_0$,
$T$, $T^\prime$, $\norm{\Gamma}_{C^{2k+2}}$, and
the $\Ww^{k-1,p}(Z)$ norms of
$F$ and $\p_tF$ such that
$$
     \Norm{\p_tu}_{\Ww^{k,p}(Z^\prime)}
     \le a_k.
$$

\item[\rm(ii)]
If $\p_sF\in\Ww^{k-1,p}(Z)$
then there is a constant
$b_k$ depending on $p$, $\mu_0$,
$T$, $T^\prime$,
$\norm{\Gamma}_{C^{2k+2}}$,
and the $\Ww^{k-1,p}(Z)$ norms
of $F$, $\p_tF$, and
$\p_sF$ such that
\begin{equation*}
     \Norm{\p_su}_{\Ww^{k,p}(Z^\prime)}
     \le b_k.
\end{equation*}

\item[\rm(iii)]
If $\p_t\p_tF\in\Ww^{k-1,p}(Z)$
then there is a constant
$c_k$ depending on $p$, $\mu_0$,
$T$, $T^\prime$,
$\norm{\Gamma}_{C^{2k+2}}$, and
the $\Ww^{k-1,p}(Z)$ norms
of $F$, $\p_tF$, and
$\p_t\p_tF$ such that
\begin{equation*}
     \Norm{\p_t\p_tu}_{\Ww^{k,p}(Z^\prime)}
     \le c_k.
\end{equation*}

\end{enumerate}
\end{proposition}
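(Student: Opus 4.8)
The plan is a triple induction on $k$: at level $k$ one proves (i), then (ii), then (iii), feeding the bounds obtained in each substep on a cylinder $Z_{T_{j-1}}$ into the next substep on a slightly smaller cylinder $Z_{T_j}$; since $k$ is fixed, only finitely many shrinkings occur, so one fixes $T^\prime<T_k<\dots<T_1<T_0=T$ once and for all. Abbreviate $N:=\Gamma(u)(\p_tu,\p_tu)$. The base level rests on three facts: $\norm{u}_{\Ww^{1,p}}\le\mu_0$; the continuity of $u$ (Sobolev embedding, valid since $p>2$); and the continuity of $\p_tu$ together with the estimate $\norm{\p_tu}_{C^0(Z^\prime)}\le c(1+\norm{\p_tF}_{L^p})$ furnished by lemma~\ref{le:bootstrap}---which is itself where the product estimate theorem~\ref{thm:product-estimate} does its essential work.

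For the inductive step one first observes, by combining integration by parts with the pointwise identity~(\ref{eq:heat-local-F}), that $\p_tu$ is a weak solution on $Z$ of
\begin{equation*}
     \p_s(\p_tu)-\p_t\p_t(\p_tu)=\p_t(N+F),
\end{equation*}
and likewise that $\p_su$ and $\p_t\p_tu$ solve the analogous equations with right hand sides $\p_s(N+F)$ and $\p_t\p_t(N+F)$; no boundary terms appear, because $t$ is periodic and the $s$-integration by parts only transfers a derivative onto a test function compactly supported in $(-T,0)\times S^1$. (For $\p_su$ one cannot integrate by parts in $s$ directly; instead one substitutes $\p_su=\p_t\p_tu+N+F$ where the derivative would otherwise have to be moved.) Consequently, once the relevant right hand side is shown to lie in $\Ww^{k-1,p}$ on $Z_{T_{j-1}}$, part~a) of theorem~\ref{thm:interior-regularity} gives the asserted $\Ww^{k,p}_{loc}$ regularity, and theorem~\ref{thm:interior-estimate}---applied, after a routine mollification as in the proof of lemma~\ref{le:parabolic-weyl}, to the now regular function---gives the quantitative bound on $Z_{T_j}$ in terms of $\mu_0$, the Sobolev norms of $F$, and $\norm{\Gamma}_{C^{2k+2}}$, the last recording the derivatives of $\Gamma$ spent in substep~(iii).

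The core of the argument is therefore to bound $\p_s^\nu\p_t^\mu$ of $\p_tN$, $\p_sN$, and $\p_t\p_tN$, for $2\nu+\mu\le2(k-1)$, in $L^p$ by the data. Expanding by the Leibniz rule and the Fa\`a di Bruno formula, each such quantity is a finite sum of terms consisting of $\nabla^i\Gamma(u)$ applied to a product of at least two factors $\p_s^{a_1}\p_t^{b_1}u,\dots,\p_s^{a_r}\p_t^{b_r}u$ whose ``parabolic weights'' $2a_l+b_l$ sum to at most $2k+2$. Putting the factor of largest weight in $L^p$ and the remaining ones in $L^\infty$ via the embedding $\Ww^{1,p}\hookrightarrow C^0$ settles every term in which one factor dominates: a parity-and-weight count shows that the dominant weight never exceeds what is already under control (by the previous level, or---for (ii) and (iii)---by substep~(i) of the current level), while the other factors have small enough weight to be continuous. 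The exceptional, \emph{balanced} terms, in which two factors carry comparably high weight---the model case being the term $\Gamma(u)(\p_t\p_tu,\p_t\p_tu)$ that one meets already at the first level, when $\p_t\p_tu$ is not yet known to be continuous---are precisely what the product estimate theorem~\ref{thm:product-estimate} is designed for: writing each critical factor (suitably cut off in $s$) as $\p_t$ of a lower-order derivative of $u$, the estimate bounds their product in $L^p$ rather than merely in $L^{p/2}$, in terms of $\Ww^{1,p}$ norms controlled by the inductive hypothesis. I expect this combinatorial bookkeeping---checking that \emph{every} admissible distribution of derivatives among the factors yields a product in $L^p$ with a bound by the data---to be the main obstacle; justifying that the formal differentiations of the weak equation are legitimate and that the interior estimate, stated for smooth functions, may be invoked is a secondary point handled by mollification.
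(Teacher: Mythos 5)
Your overall architecture coincides with the paper's: the paper proves, by induction on $\ell\in\{1,\dots,k\}$, the three substatements (a), (b), (c) corresponding to your (i), (ii), (iii), shrinking the cylinder at each substep (the paper's $Z_{3\ell-1}\supset Z_{3\ell}\supset Z_{3\ell+1}\supset Z_{3\ell+2}$), with the base of the induction resting on lemma~\ref{le:bootstrap} exactly as you say, and each inductive step combining the weak formulation of the differentiated heat equation (your identities for $\p_tu$, $\p_su$, $\p_t\p_tu$; the paper's~(\ref{eq:p_tu}) and~(\ref{eq:p_su})) with theorem~\ref{thm:interior-regularity}. That part is right and is essentially the paper's proof.

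The one place you deviate is in how the ``balanced'' quadratic terms in the derivatives of $N=\Gamma(u)(\p_tu,\p_tu)$ are controlled. You propose to reach again for the product estimate theorem~\ref{thm:product-estimate} to handle products like $\p_t\p_tu\cdot\p_t\p_tu$. The paper instead confines theorem~\ref{thm:product-estimate} entirely to the initial continuity step (lemma~\ref{le:bootstrap}), and inside the bootstrap uses the Moser-type product bounds of lemma~\ref{le:product-Sobolev} and corollary~\ref{cor:product-Sobolev}, which rest only on Leibniz and the Sobolev embedding $\Ww^{1,p}\subset W^{1,p}\hookrightarrow C^0$. The paper's key structural point, which replaces your ``parity-and-weight count,'' is the ordering (a)$\Rightarrow$(b)$\Rightarrow$(c) at each level: by the time one reaches the term $\Gamma(u)(\p_t\p_tu,\p_t\p_tu)$, the bounds from (a) and (b) already give $\p_t\p_tu\in W^{1,p}\hookrightarrow C^0$ at the base level, and at level $\ell+1$ one has $\p_tu\in\Cc^\ell$ from the freshly proved (a)$_{\ell+1}$, so one factor always absorbs into an $L^\infty$-type norm. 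Your route through theorem~\ref{thm:product-estimate} also closes, but it forces you to track one extra $t$-derivative (since the estimate consumes $\p_t\p_tw$ on one factor); the paper's ordering of substeps avoids that and gives a cleaner bookkeeping. Both arguments need, as you note, a mollification remark to apply the interior estimate (stated for smooth functions) to $\Ww^{k,p}$ data; the paper leaves this implicit.

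Two small corrections of emphasis: the paper does not invoke the Fa\`a di Bruno formula in general form; the quadratic structure of $N$ makes the derivatives $\p_th$, $\p_sh$, $\p_t\p_th$ explicit (equation~(\ref{eq:p_t-h}) and its analogues) and these expansions are bounded term-by-term. And the dependence on $\norm{\Gamma}_{C^{2k+2}}$ is tracked through the constants in corollary~\ref{cor:product-Sobolev}, not through a separate Fa\`a di Bruno count.
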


\begin{notation}
In the proofs of lemma~\ref{le:bootstrap}
and proposition~\ref{prop:bootstrap}
we use the following notation.
The parabolic $\Cc^k$ norm of a smooth
function $u$ is given by
\begin{equation}\label{eq:parabolic-Ck}
     \Norm{u}_{\Cc^k}
     :=\sum_{2\nu+\mu\le 2k}
     \Norm{\p_s^\nu \p_t^\mu u}_{\infty}.
\end{equation}
Compare this to standard space $C^k$ with norm
$\norm{u}_{C^k}=\sum_{\nu+\mu\le k}
\norm{\p_s^\nu \p_t^\mu u}_{\infty}$.
Given two constants $T>T^\prime>0$ consider
the sequence
\begin{equation}\label{eq:T_k}
     T_k:=T^\prime+\frac{T-T^\prime}{k},\quad
     k\in\N.
\end{equation}
Note that $T_1=T$.
This definition
also makes sense if
we replace $k$ by
a real number $r\ge1$.
Now consider the cylinders
$Z_r=(-T_r,0]\times S^1$.
By $\INT\, Z_r$ we denote the interior
$(-T_r,0)\times S^1$ of $Z_r$.
It is useful to
memorize that $Z_{r+1}\subset Z_r$.
For each positive integer $k$
fix a smooth compactly supported
cutoff function
\begin{equation}\label{eq:cutogg-rho}
     \rho_k:(-T_k,0]\to[0,1]
\end{equation}
such that $\rho_k=1$ on
$Z_{k+1}$ and $\norm{\p_s\rho}_\infty\ge 1$.
\end{notation}

\begin{proof}
[Proof of lemma~\ref{le:bootstrap}]
Denote the nonlinear part of the heat
equation~(\ref{eq:heat-local-F}) by
$$
    h=h(u)
    =\Gamma(u)\left(\p_tu,\p_tu\right)+F
$$
and the first cutoff function
fixed in~(\ref{eq:cutogg-rho})
by $\rho=\rho_1$.
Then $h\in L^p(Z_2)$, namely
\begin{equation*}
\begin{split}
     \Norm{h}_{L^p(Z_2)}
    &\le\Norm{\rho^2h}_{L^p(Z_1)}\\
    &\le \Norm{\Gamma}_\infty
     \Norm{\Abs{\p_t(\rho u)}\cdot
     \Abs{\p_t(\rho u)}}_{L^p(Z_1)}
     +\Norm{\rho^2F}_{L^p(Z_1)}\\
    &\le C_p\Norm{\Gamma}_\infty\Norm{\p_s\rho}_\infty^2
     \Norm{u}_{\Ww^{1,p}(Z)}^2
     +\Norm{F}_{L^p(Z)}
\end{split}
\end{equation*}
where in step one and two
we used that $\rho^2=1$ on $Z_2$
and independence of $\rho$
on the $t$ variable, respectively.
The last step is by
the product estimate
theorem~\ref{thm:product-estimate}
with constant $C_p>0$
applied to the compactly supported
$\Ww^{1,p}$ map $\rho u:Z\to\R^N$
using density.
Compactness of $M$ implies
that $\norm{\Gamma}_\infty<\infty$.
Next observe that
\begin{equation}\label{eq:p_t-h}
     \p_th
     =d\Gamma(u) \left(\p_tu,\p_tu,\p_tu\right)
     +2\Gamma(u) \left(\p_t\p_tu,\p_tu\right)
     +\p_tF.
\end{equation}

Now we indicate the main idea of proof.
Suppose we knew that
$\p_th\in L^\chi(Z_{k+1})$
for some $\chi>1$ and some $k\in\N$, then
\begin{equation}\label{eq:p_tu}
\begin{split}
     \int_{Z_{k+1}}
     \p_tu\left(-\p_s\phi-\p_t\p_t\phi\right)
    &=-\int_{Z_{k+1}} \p_su\,\p_t\phi
     +\int_{Z_{k+1}} \p_t\p_tu\,\p_t\phi \\
    &=-\int_{Z_{k+1}} h\,\p_t\phi\\
    &=\int_{Z_{k+1}} \p_th\,\phi
\end{split}
\end{equation}
for every
$\phi\in C^\infty_0(\INT\, {Z_{k+1}})$.
Here all steps use
integration by parts.
Step two is by
definition of $h$
and the assumption that
$u$ satisfies
the heat equation~(\ref{eq:heat-local-F})
almost everywhere.
Theorem~\ref{thm:interior-regularity}
on interior regularity
then asserts that
$\p_tu\in\Ww^{1,\chi}(Z_{k+2})$.
Hence we have improved
the regularity of $\p_tu$
which in turn
improves the of regularity $\p_th$
as given by~(\ref{eq:p_t-h}).
Now start over again.
We prove below that under this
iteration $\chi$ eventually
converges to $p$.
But $p>2$, hence continuity
of $\p_tu$ follows by the
Sobolev embedding
$\W^{1,\chi}\hookrightarrow C^0$.

To get the iteration started at $k=1$
we need to first prove
that $\p_th\in L^\chi(Z_2)$
for some $\chi>1$.
As a first try recall that $u\in\Ww^{1,p}(Z_1)$
by assumption,
therefore the first term in~(\ref{eq:p_t-h})
is in $L^{p/3}$ only whereas the second term
is in $L^{p/2}$.
Hence $\p_th\in L^{p/3}$,
but $p/3$ is not necessarily
larger than $1$.
Fortunately,
using the product estimate
theorem~\ref{thm:product-estimate}
we can do better.
Recall that $p>2$ is fixed by assumption.
Consider the function
$$
     \chi=\chi_p(q)=\frac{pq}{p+q}
$$
and observe that $1/p+1/q=1/\chi$.
Apply H\"older's inequality
to obtain
\begin{equation}\label{eq:p_th}
\begin{split}
     \Norm{\p_th}_{L^\chi(Z_{k+1})}
    &\le \Norm{{\rho_k}^2\p_th}_{L^\chi(Z_k)}
    \\
    &\le \Norm{d\Gamma}_\infty
      \Norm{\Abs{\p_t(\rho_ku)}\cdot\Abs{\p_t(\rho_ku)}}
      _{L^p(Z_k)}
      \Norm{\p_tu}_{L^q(Z_k)}
    \\
    &\quad 
      +2\Norm{\Gamma}_\infty
     \Norm{\p_t\p_tu}_{L^p(Z_k)}
     \Norm{\p_tu}_{L^q(Z_k)}
     +\Norm{\p_tF}_{L^\chi(Z_k)}
    \\
    &\le C_p\Norm{d\Gamma}_\infty
     \Norm{\p_s\rho_k}_\infty^2
     \Norm{u}_{\Ww^{1,p}(Z)}^2
     \Norm{\p_tu}_{L^q(Z_k)}
    \\
    &\quad
     +2\Norm{\Gamma}_\infty
     \Norm{\p_t\p_tu}_{L^p(Z)}
     \Norm{\p_tu}_{L^q(Z_k)}
     +\Norm{\p_tF}_{L^p(Z_k)}
    \\
    &\le\alpha\Norm{\p_tu}_{L^q(Z_k)}
     +\Norm{\p_tF}_{L^p(Z)}.
\end{split}
\end{equation}
Here the third step is by
the product estimate
theorem~\ref{thm:product-estimate}
with constant $C_p$
and the constant $\alpha$
in the last line
depends on $p$, $\mu_0$,
$\norm{\Gamma}_{C^1}$, and $\rho_k$.
We used again one of the cutoff functions
in~(\ref{eq:cutogg-rho}) to produce
a compactly supported function
as required by
the product estimate.
Consequently the domain shrinks.

Now we start the iteration
with initial value $q_1=p$.
Then $\chi(q_1)=p/2>1$.
Hence $\p_th\in L^{p/2}(Z_2)$
by~(\ref{eq:p_th}) for $k=1$.
Therefore by~(\ref{eq:p_tu})
theorem~\ref{thm:interior-regularity}
applies for the functions $\p_tu$ and $f=\p_th$
and proves that
$\p_tu\in\Ww^{1,p/2}_{loc}(Z_2)$
and
\begin{equation}\label{eq:p_tu2}
\begin{split}
     \Norm{\p_tu}_{\Ww^{1,p/2}(Z_{3})}
    &\le c_2\left(
     \Norm{\p_th}_{L^{p/2}(Z_2)}
     +\mu_0\right)\\
    &\le c_2\left( \alpha\mu_0
     +\Norm{\p_tF}_{L^p(Z)}
     +\mu_0\right)
\end{split}
\end{equation}
for some constant
$c_2=c_2(p,T_2-T_3)$.
Step two uses~(\ref{eq:p_th})
for $k=1$ and $q=p/2$,
the fact that
$\norm{\p_tu}_{p/2}
\le\norm{\p_tu}_p$,
and the assumption
$\norm{\p_tu}_p\le\mu_0$.

\noindent
Now there are three cases:
If $p>4$ then we are done by
the Sobolev embedding
$W^{1,p/2}\hookrightarrow C^0$
on the domain $Z_3$;
see e.g.~\cite[App.~B.1]{MS}
for the relevant embedding theorems.
If $p<4$, then
the value of
$\chi=\chi_p(q_1)=p/2$ is in the interval
$(1,2)$ and in this case there is the 
Sobolev embedding
$$
     \Ww^{1,\chi}(Z_{3})\subset W^{1,\chi}(Z_{3})
     \hookrightarrow L^{2\chi/(2-\chi)}(Z_{3})
     =L^{q_2}(Z_3)
$$
with constant $C_2=C_2(p,T_{3})>0$.
Here we abbreviated
$$
     q_2:=\frac{2\chi}{2-\chi}
     =\frac{2pq_1}{2p+2q_1-pq_1}
     =\frac{2p}{4-p}.
$$
Hence $\p_tu\in L^{2p/(4-p)}(Z_3)$.
Since $2p/(4-p)>p$ is equivalent to $2<p<4$,
this means that
the regularity of $\p_tu$
has been improved -- on the
expense of a smaller domain though.
The case $p=4$ means that $u:Z\to\R^N$
is a $\Ww^{1,4}$ map to start with.
But then it is also a $\Ww^{1,3}$
map and we are in the former case.

Repeating the same argument
with new initial value $q_2$
proves that
$\p_tu\in\Ww^{1,\chi_p(q_2)}(Z_5)$.
Again this space embedds either
in $C^0(Z_5)$ and we are done
or it embedds in $L^{q_3}(Z_5)$ where
$q_3=2pq_2/(2p+2q_2-pq_2)>q_2$.
It is crucial that
in~(\ref{eq:p_th})
the value of $p$
is fixed. Firstly, because
the product estimate 
theorem~\ref{thm:product-estimate}
requires $p\ge2$
and, secondly, because 
we only know that $\p_t\p_tu\in L^p$.
Proceeding this way we obtain the
sequence $q_k$ determined by
\begin{equation}\label{eq:q_k}
     q_{k+1}=\frac{2pq_k}{2p+2q_k-pq_k},\qquad
     q_1=p.
\end{equation}
Observe again that the condition
$p>2$ implies that
$q_{k+1}>q_k$.
Hence the sequence is strictly
monotone increasing.
Next we prove that
$q_k\to\infty$ as $k\to\infty$.
Assume by contradiction
that this is not true.
Then by strict monotonicity
the sequence is bounded
and admits a unique limit, say $q$.
By~(\ref{eq:q_k})
this limit satisfies
$q=2pq/(2p+2q-pq)$.
But this is equivalent to $p=2$
contradicting $p>2$.
It follows that $\chi_p(q_k)$ converges to $p$
as $k\to\infty$.
But $p>2$, hence
whenever $k$ is sufficiently large
there is the Sobolev embedding
$$
     \Ww^{1,\chi_p(q_k)}(Z_{2k+1})
     \hookrightarrow C^0(Z_{2k+1})
     \subset C^0(Z^\prime)
$$
and this implies the estimate
in lemma~\ref{le:bootstrap}.
Clearly $\p_tu$ is continuous on
the whole cylinder $Z$
since every point is contained
in some subcylinder $Z^\prime$.
\end{proof}

\begin{proof}
[Proof of proposition~\ref{prop:bootstrap}]
We prove the following claim
by induction on $\ell$.
Recall from~(\ref{eq:T_k})
the definition of the reals $T_\ell$ 
and the cylinders $Z_\ell$.

\vspace{.1cm}
\noindent
{\it Claim.}
{\it Given $0<T^\prime<T$ and
$k\ge1$ such that
$F$ and $\p_tF$ are in
$\Ww^{k-1,p}$,
then the following is true
for every
$\ell\in\{1,\ldots,k\}$.
\begin{enumerate}

\item[\rm(a)]
$\p_tu\in\Ww^{\ell,p}_{loc}(Z_{3\ell-1})$
and there exists a constant
$A_\ell$ depending on
$p$, $\mu_0$,
$\norm{\Gamma}_{C^{2\ell+2}}$,
$\norm{F}_{\Ww^{\ell-1,p}}$,
and $\norm{\p_tF}_{\Ww^{\ell-1,p}}$
such that
$$
     \Norm{\p_tu}
     _{\Ww^{\ell,p}(Z_{3\ell})}
     \le A_\ell.
$$

\item[\rm(b)]
If $\p_sF\in\Ww^{k-1,p}$
then $\p_su\in\Ww^{\ell,p}_{loc}(Z_{3\ell})$
and there exists a constant
$B_\ell$ depending on
$p$, $\mu_0$,
$\norm{\Gamma}_{C^{2\ell+2}}$,
$\norm{F}_{\Ww^{\ell-1,p}}$,
$\norm{\p_tF}_{\Ww^{\ell-1,p}}$,
and $\norm{\p_sF}_{\Ww^{\ell-1,p}}$
such that
$$
     \Norm{\p_su}
     _{\Ww^{\ell,p}(Z_{3\ell+1})}
     \le B_\ell.
$$

\item[\rm(c)]
If $\p_t\p_tF\in\Ww^{k-1,p}$ then
$\p_t\p_tu\in\Ww^{\ell,p}_{loc}(Z_{3\ell+1})$
and there exists a constant
$C_\ell$ depending on
$p$, $\mu_0$,
$\norm{\Gamma}_{C^{2\ell+2}}$,
$\norm{F}_{\Ww^{\ell-1,p}}$,
$\norm{\p_tF}_{\Ww^{\ell-1,p}}$,
and $\norm{\p_t\p_tF}_{\Ww^{\ell-1,p}}$
such that
$$
    \Norm{\p_t\p_tu}_{\Ww^{\ell,p}(Z_{3\ell+2})}
    \le C_\ell.
$$
\end{enumerate}
}

Here and throughout the domain
of all norms is $Z=Z_T$,
unless specified otherwise.
An exception are the various norms
of $\Gamma$ appearing below
for which the domain
is the compact manifold $M$.
We abbreviate
$h=\Gamma(u)\left(\p_tu,\p_tu\right)+F$.

\vspace{.1cm}
\noindent
{\it Case $\ell=1$.}
%
By lemma~\ref{le:bootstrap}
with $T^\prime=T_2$
there is a constant
$C_0$ depending on $p$, $\mu_0$, $T$, $T_2$,
and $\norm{\Gamma}_{C^1}$,
such that
\begin{equation}\label{eq:C0}
     \Norm{\p_tu}_{C^0(Z_2)}
     \le C_0\left(1+\Norm{\p_tF}_p\right).
\end{equation}

\vspace{.1cm}
\noindent
(a) Recall that $\p_th$ is given 
by~(\ref{eq:p_t-h}).
Straightforward
calculation shows that
\begin{equation*}
\begin{split}
     \Norm{\p_th}_{L^p(Z_2)}
    &\le
     \Norm{d\Gamma}_\infty
     \Norm{\p_tu}^2_{C^0(Z_2)}
     \Norm{\p_tu}_{L^p(Z_2)}
     +\Norm{\p_tF}_{L^p(Z_2)}\\
    &\quad
     +2\Norm{\Gamma}_\infty
     \Norm{\p_tu}_{C^0(Z_2)}
     \Norm{\p_t\p_tu}_{L^p(Z_2)}\\
    &\le \alpha\left(1+
     \Norm{\p_tF}_p^2\right)
\end{split}
\end{equation*}
for some constant
$\alpha=\alpha(p,\mu_0,T,T_2,
\norm{\Gamma}_{C^1})$.
We used~(\ref{eq:C0})
and the assumption
$\norm{u}_{\Ww^{1,p}}\le\mu_0$.
Recall from~(\ref{eq:p_tu})
that $\p_tu$ satisfies
$$
     \int_{Z_2} \p_tu\left(-\p_s\phi-\p_t\p_t\phi\right)
     =\int_{Z_2} \p_th\, \phi
$$
for every
$\phi\in C^\infty_0(\INT\, Z_2)$.
Hence theorem~\ref{thm:interior-regularity}
on interior regularity for $q=p$,
$T=T_2$, $T^\prime=T_3$, $k=0$,
and the functions $f=\p_th$ and $\p_tu$
in $L^p(Z_2)$
proves that
$\p_tu\in\Ww^{1,p}_{loc}(Z_2)$
and
$$
     \Norm{\p_tu}_{\Ww^{1,p}(Z_3)}
     \le \mu\left(
     \Norm{\p_th}_{L^p(Z_2)}
     +\Norm{\p_tu}_{L^p(Z_2)}\right)
$$
for some constant
$\mu=\mu(p,T_2,T_3)$.
Now use the estimate for $\p_th$ to see that
$$
     \Norm{\p_tu}_{\Ww^{1,p}(Z_3)}
     \le A\left(1+\Norm{\p_tF}_p^2\right)
$$
for some constant
$A=A(p,\mu_0,T,T_2,T_3,\norm{\Gamma}_{C^1})$.

\vspace{.1cm}
\noindent
(b) Straightforward
calculation shows that
\begin{equation*}
\begin{split}
     \Norm{\p_sh}_{L^p(Z_3)}
    &\le
     \Norm{d\Gamma}_\infty
     \Norm{\p_tu}^2_{C^0(Z_3)}
     \Norm{\p_su}_{L^p(Z_3)}
     +\Norm{\p_sF}_{L^p(Z_3)}\\
    &\quad
     +2\Norm{\Gamma}_\infty
     \Norm{\p_tu}_{C^0(Z_3)}
     \Norm{\p_s\p_tu}_{L^p(Z_3)}\\
    &\le \beta\left(1
     +\Norm{\p_tF}_p^3
     \right)
     +\Norm{\p_sF}_p
\end{split}
\end{equation*}
for some constant
$\beta=\beta(p,\mu_0,T,T_2,T_3,
\norm{\Gamma}_{C^1})>1$.
Here we estimated the $L^p$ norm
of $\p_s\p_tu$ by the
$\Ww^{1,p}$ estimate for $\p_tu$
just proved in~(a).
We also used the $C^0$ estimate~(\ref{eq:C0}).
Next observe that
\begin{equation}\label{eq:p_su}
\begin{split}
     \int_{Z_3} \p_su
     \left(-\p_s\phi-\p_t\p_t\phi\right)
    &=-\int_{Z_3}
     \left(\p_su-\p_t\p_tu\right)
     \p_s\phi\\
    &=-\int_{Z_3} \left(\Gamma(u)
     \left(\p_tu,\p_tu\right)+F(u)\right)
     \p_s\phi\\
    &=\int_{Z_3} \p_sh\,\phi
\end{split}
\end{equation}
for every
$\phi\in C^\infty_0(\INT\, Z_3)$.
Here steps one and three
are by integration by parts.
Step two uses
the assumption that $u$
satisfies
the heat equation~(\ref{eq:heat-local-F})
almost everywhere.
Now theorem~\ref{thm:interior-regularity}
proves that
$\p_su\in\Ww^{1,p}_{loc}(Z_3)$
and
$$
     \Norm{\p_su}_{\Ww^{1,p}(Z_4)}
     \le \mu\left(
     \Norm{\p_sh}_{L^p(Z_3)}
     +\Norm{\p_su}_{L^p(Z_3)}\right)
$$
for some constant
$\mu=\mu(p,T_3,T_4)$.
Now use the estimate for $\p_sh$ to see that
$$
     \Norm{\p_su}_{\Ww^{1,p}(Z_4)}
     \le B\left(1+\Norm{\p_tF}_p^3
     +\Norm{\p_sF}_p\right)
$$
for some constant
$B=B(p,\mu_0,T,T_2,T_3,T_4,\norm{\Gamma}_{C^1})$.

\vspace{.1cm}
\noindent
(c) Straighforward calculation
shows that
\begin{equation*}
\begin{split}
     \Norm{\p_t\p_th}_{L^p(Z_4)}
    &\le
     \Norm{d^2\Gamma}_\infty
     \Norm{\p_tu}^3_{C^0(Z_4)}
     \Norm{\p_tu}_{L^p(Z_4)}
     +\Norm{\p_t\p_tF}_{L^p(Z_4)}
     \\
    &\quad
     +4\Norm{d\Gamma}_\infty
     \Norm{\p_tu}^2_{C^0(Z_4)}
     \Norm{\p_t\p_tu}_{L^p(Z_4)}\\
    &\quad
     +2\Norm{\Gamma}_\infty
     \Norm{\p_tu}_{C^0(Z_4)}
     \Norm{\p_t\p_t\p_tu}_{L^p(Z_4)}
     \\
    &\quad
     +2\Norm{\Gamma}_\infty
     \Norm{\p_t\p_tu}_{C^0(Z_4)}
     \Norm{\p_t\p_tu}_{Lp(Z_4)}
     \\
    &\le \gamma\left(1
     +\Norm{\p_tF}_p^4
     \right)
     +\Norm{\p_t\p_tF}_p
\end{split}
\end{equation*}
for some constant
$\gamma=\gamma(p,\mu_0,T,T_2,T_3,T_4,
\norm{\Gamma}_{C^2})$.
In the final inequality
we used the $C^0$ estimate~(\ref{eq:C0})
for $\p_tu$
and the $\Ww^{1,p}$
estimate for $\p_tu$
proved above in~(a).
This takes care of all terms but one,
namely the $C^0$ norm of $\p_t\p_tu$.
Here we use that
$\p_t\p_t\p_tu$ and
$\p_s\p_t\p_tu=\p_t\p_t\p_su$
are in $L^p(Z_4)$
by~(a) and~(b), respectively.
Hence $\p_t\p_tu\in C^0$
by the Sobolev embedding
$W^{1,p}\hookrightarrow C^0$.
Similarly to the calculation
in~(\ref{eq:p_tu})
it follows that
$$
     \int_{Z_4}\p_t\p_tu
     \left(-\p_s\phi-\p_t\p_t\phi\right)
     =\int_{Z_4}\p_t\p_th\,\phi
$$
for every $\phi\in C_0^\infty(\INT\, Z_4)$.
Theorem~\ref{thm:interior-regularity}
then proves that
$\p_t\p_tu\in\Ww^{1,p}_{loc}(Z_4)$
and
$$
     \Norm{\p_t\p_tu}_{\Ww^{1,p}(Z_5)}
     \le \mu\left(
     \Norm{\p_t\p_th}_{L^p(Z_4)}
     +\Norm{\p_t\p_tu}_{L^p(Z_4)}\right)
$$
for some constant
$\mu=\mu(p,T_4,T_5)$.
Now use the estimate for $\p_t\p_th$ to see that
$$
     \Norm{\p_t\p_tu}_{\Ww^{1,p}(Z_5)}
     \le C\left(1+\Norm{\p_tF}_p^4
     +\Norm{\p_t\p_tF}_p\right)
$$
for some constant
$C=C(p,\mu_0,T,T_2,T_3,T_4,T_5,\norm{\Gamma}_{C^2})$.

\vspace{.1cm}
\noindent
{\it Induction step
$\ell\Rightarrow \ell+1$.}
%
Fix an integer $\ell\in\{1,\ldots,k-1\}$
and assume that~(a--c)
are true for this choice of $\ell$.
We indicate this by the
notation~$\text{(a--c)}_\ell$.
The task at hand is
to prove~$\text{(a--c)}_{\ell+1}$.
Recall the parabolic $\Cc^\ell$
norm~(\ref{eq:parabolic-Ck}).
An immediate consequence
of the induction
hypothesis~$\text{(a--c)}_\ell$
is that
\begin{equation*}
     \Norm{u}_{\Ww^{\ell+1,p}(Z_{3\ell+2})}
     \le D_{\ell+1}^\prime
\end{equation*}
for some constant
$D_{\ell+1}^\prime=D_{\ell+1}^\prime(p,\mu_0,
\norm{\Gamma}_{C^{2\ell+2}},
\norm{F}_{\Ww^{\ell,p}})$.
Hence
\begin{equation}\label{eq:Cell}
     \Norm{u}_{\Cc^\ell(Z_{3\ell+2})}
     \le D_{\ell+1}
\end{equation}
for some constant
$D_{\ell+1}=D_{\ell+1}(p,\mu_0,
\norm{\Gamma}_{C^{2\ell+2}},
\norm{F}_{\Ww^{\ell,p}})$.
To see this observe that
up to a constant
the $\Cc^\ell$ norm
can be estimated
by the $\Ww^{\ell+1,p}$ norm.
(This boils down to the
Sobolev embedding
$W^{1,p}\hookrightarrow C^0$ 
for each individual derivative of $u$
showing up in $\Cc^\ell$.)

\vspace{.1cm}
\noindent
$\text{(a)}_{\ell+1}$ Straightforward
calculation shows that
\begin{equation*}
\begin{split}
    &\Norm{\p_th}_{\Ww^{\ell,p}(Z_{3\ell+2})}\\
    &\le
     \Norm{d\Gamma}_{C^{2\ell}} d_\ell
     \Norm{u}_{\Cc^\ell(Z_{3\ell+2})}^2
     \Norm{\p_tu}_{\Ww^{\ell,p}(Z_{3\ell+2})}
     +\Norm{\p_tF}
     _{\Ww^{\ell,p}(Z_{3\ell+2})}
     \\
    &\quad
     +2\Norm{\Gamma}_{C^{2\ell}} d_\ell
     \Norm{u}_{\Cc^\ell(Z_{3\ell+2})}
     \left(
     \Norm{\p_tu}
     _{\Ww^{\ell,p}(Z_{3\ell+2})}
     +\Norm{\p_t\p_tu}
     _{\Ww^{\ell,p}(Z_{3\ell+2})}
     \right)\\
    &\le \alpha_{\ell+1}
     +\Norm{\p_tF}_{\Ww^{\ell,p}}
\end{split}
\end{equation*}
for some constant
$\alpha_{\ell+1}=\alpha_{\ell+1}(p,\mu_0,
\norm{\Gamma}_{C^{2\ell+2}},
\norm{F}_{\Ww^{\ell,p}})$.
The first inequality
follows from the identity~(\ref{eq:p_t-h})
and the last two estimates of
corollary~\ref{cor:product-Sobolev}
with constant $d_\ell$.
Notice the difference
between the standard $C^\ell$ and
the parabolic $\Cc^\ell$ norms.
To obtain the second inequality
we applied~(\ref{eq:Cell})
and the induction
hypotheses~${\rm(a)}_\ell$
and~${\rm(c)}_\ell$
to estimate the
$\Ww^{\ell,p}$ norms of
$\p_tu$ and $\p_t\p_tu$,
respectively.
Next observe that
theorem~\ref{thm:interior-regularity}
applies by~(\ref{eq:p_tu})
and shows that
$\p_tu\in\Ww^{\ell+1,p}_{loc}(Z_{3\ell+2})$
and
$$
     \Norm{\p_tu}_{\Ww^{\ell+1,p}(Z_{3\ell+3})}
     \le \mu\left(
     \Norm{\p_th}_{\Ww^{\ell,p}(Z_{3\ell+2})}
     +\Norm{\p_tu}_{L^p(Z_{3\ell+2})}
     \right)
$$
for some constant
$\mu=\mu(p,Z_{3\ell+2},Z_{3\ell+3})$.
Now the assumption
$\norm{u}_{\Ww^{1,p}}\le\mu_0$
and the estimate for $\p_th$
conclude the proof
of~${\text{(a)}}_{\ell+1}$.
For latter reference we remark
that~${\text{(a)}}_{\ell+1}$
implies -- similarly to~(\ref{eq:Cell}) --
the estimate
\begin{equation}\label{eq:C^0-p_tu}
     \Norm{\p_tu}_{\Cc^\ell(Z_{3\ell+3})}
     \le E_\ell
\end{equation}
for some constant
$E_\ell=E_\ell
(p,\mu_0,\norm{\Gamma}_{C^{2\ell+2}},
\norm{F}_{\Ww^{\ell,p}},
\norm{\p_tF}_{\Ww^{\ell,p}})$.

\vspace{.1cm}
\noindent
$\text{(b)}_{\ell+1}$ Straightforward 
calculation using 
the $\Ww^{\ell+1,p}$ estimate
for $\p_tu$ just proved
and the induction 
hypotheses~${\text{(a--c)}}_\ell$
implies that
\begin{equation*}
\begin{split}
     \Norm{\p_sh}_{\Ww^{\ell,p}(Z_{3\ell+3})}
    &\le
     \Norm{d\Gamma}_{C^{2\ell}}
     \Norm{\p_tu}_{\Cc^\ell(Z_{3\ell+3})}^2
     \Norm{\p_su}_{\Ww^{\ell,p}(Z_{3\ell+3})}
     +\Norm{\p_sF}_{\Ww^{\ell,p}(Z_{3\ell+3})}
     \\
    &\quad
     +2\Norm{\Gamma}_{C^{2\ell}}
     \Norm{\p_tu}_{\Cc^\ell(Z_{3\ell+3})}
     \Norm{\p_s\p_tu}_{\Ww^{\ell,p}(Z_{3\ell+3})}
     \\
    &\le \beta_{\ell+1}
     +\Norm{\p_sF}_{\Ww^{\ell,p}}
\end{split}
\end{equation*}
for some constant
$\beta_{\ell+1}=\beta_{\ell+1}(p,\mu_0,
\norm{\Gamma}_{C^{2\ell+2}},
\Norm{F}_{\Ww^{\ell,p}},
\Norm{\p_tF}_{\Ww^{\ell,p}})$.
To obtain the first inequality
we simply pulled
out the $\Cc^\ell$ norms.
In the second inequality
we used~(\ref{eq:C^0-p_tu}),
the induction
hypothesis~${\text{(b)}}_\ell$
to estimate the $\Ww^{\ell,p}$ norm of $\p_su$,
and the induction
hypothesis~${\text{(a)}}_{\ell+1}$
just proved
to estimate the
$\Ww^{\ell,p}$ norm
of $\p_s\p_tu$.
Next observe that
theorem~\ref{thm:interior-regularity}
applies by the identity~(\ref{eq:p_su})
with $Z_3$ replaced by $Z_{3\ell+3}$
and shows that
$\p_su\in\Ww^{\ell+1,p}_{loc}(Z_{3\ell+3})$
and
$$
     \Norm{\p_su}_{\Ww^{\ell+1,p}(Z_{3\ell+4})}
     \le \mu\left(
     \Norm{\p_sh}_{\Ww^{\ell,p}(Z_{3\ell+4})}
     +\Norm{\p_tu}_{L^p(Z_{3\ell+4})}
     \right)
$$
for some constant
$\mu=\mu(p,Z_{3\ell+3},Z_{3\ell+4})$.
Now use the estimate for $\p_sh$.

\vspace{.1cm}
\noindent
$\text{(c)}_{\ell+1}$ Straighforward calculation
shows that
\begin{equation*}
\begin{split}
     \Norm{\p_t\p_th}
     _{\Ww^{\ell,p}(Z_{3\ell+3})}
    &\le
     \Norm{d^2\Gamma}_{C^{2\ell}}
     \Norm{\p_tu}^3_{\Cc^\ell}
     \Norm{\p_tu}_{\Ww^{\ell,p}}\\
    &\quad
      +5\Norm{d\Gamma}_{C^{2\ell}}
     \Norm{\p_tu}^2_{\Cc^\ell}
     \Norm{\p_t\p_tu}_{\Ww^{\ell,p}}\\
    &\quad
     +2\Norm{\Gamma}_{C^{2\ell}}
     \Norm{\p_tu}_{\Cc^\ell}
     \Norm{\p_t\p_t\p_tu}_{\Ww^{\ell,p}}
     +\Norm{\p_t\p_tF}_{\Ww^{\ell,p}}\\
    &\quad
     +2\Norm{\Gamma}_{C^{2\ell}} C_k^\prime
    \Norm{\p_tu}_{\Cc^\ell}
     \Norm{\p_t\p_tu}_{\Ww^{\ell,p}}.
\end{split}
\end{equation*}
Here all norms are taken on
the domain $Z_{3\ell+3}$
except those involving $\Gamma$
which are taken over $M$.
Notice that in the first three
terms of the sum we simply pulled out
the $\Cc^\ell$ norms.
However, in the last term
there appears originally the product
$\p_t\p_tu$ times $\p_t\p_tu$.
To deal with this product we applied
the first estimate of
corollary~\ref{cor:product-Sobolev}
(where in both factors
$u$ is replaced by $\p_tu$).

\noindent
Now the $\Cc^\ell$
estimate~(\ref{eq:C^0-p_tu})
for $\p_tu$ and
the $\Ww^{\ell+1,p}$ estimate
for $\p_tu$ established
in~${\text{(a)}}_{\ell+1}$ above
prove that
$$
     \Norm{\p_t\p_th}
     _{\Ww^{\ell,p}(Z_{3\ell+3})}
     \le\gamma_{\ell+1}
     +\Norm{\p_t\p_tF}_{\Ww^{\ell,p}}
$$
for some constant
$\gamma_{\ell+1}
=\gamma_{\ell+1}(\ell,p,\mu_0,
\norm{\Gamma}_{C^{2\ell+2}},
\norm{F}_{\Ww^{\ell,p}},
\norm{\p_tF}_{\Ww^{\ell,p}})$.
Apply again
theorem~\ref{thm:interior-regularity}
to see that
$\p_t\p_tu\in\Ww^{\ell+1,p}_{loc}(Z_{3\ell+3})$
and
$$
     \Norm{\p_t\p_tu}
     _{\Ww^{\ell+1,p}(Z_{3\ell+4}))}
     \le \mu\left(
     \Norm{\p_t\p_th}
     _{\Ww^{\ell,p}(Z_{3\ell+3})}
     +\Norm{\p_t\p_tu}_{L^p(Z_{3\ell+3})}
     \right)
$$
for some constant
$\mu=\mu(p,Z_{3\ell+3},Z_{3\ell+4})$.
The estimate for $\p_t\p_th$
then proves~(c)
in the case $\ell+1$.
This completes the proof 
of the induction step
and therefore of the claim.
The claim with $\ell=k$
proves proposition~\ref{prop:bootstrap}.
\end{proof}

The following product estimates
have been used in
the parabolic bootstrap
iteration above.
Recall the definition~(\ref{eq:parabolic-Ck})
of the parabolic $\Cc^k$ norm.

\begin{lemma}\label{le:product-Sobolev}
Fix a constant $p>2$ and
a bounded open subset
$\Omega\subset\R^2$ with area $\abs{\Omega}$.
Then for every 
integer $k\ge 1$
there is a constant $c=c(k,\abs{\Omega})$
such that
$$
     \Norm{\p_tu\cdot v}_{\Ww^{k,p}}
     \le c \left(\Norm{\p_tu}_{\Ww^{k,p}}
     \Norm{v}_\infty
     +\Norm{u}_{\Cc^k}
     \Norm{v}_{\Ww^{k,p}}\right)
$$
for all functions $u,v\in
C^\infty(\overline{\Omega})$.
\end{lemma}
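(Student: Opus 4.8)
The plan is to recognize this as the anisotropic, parabolically weighted Moser (Kato--Ponce type) product estimate applied to $f:=\p_tu$ and $g:=v$; here ``parabolically weighted'' means we assign to $\p_s$ the weight $2$ and to $\p_t$ the weight $1$, so that $\Ww^{k,p}=W^{k,2k}_p$ plays the role of a Sobolev space of ``order $2k$''. First I would expand by the Leibniz rule
\[
     \p_s^\nu\p_t^\mu(\p_tu\cdot v)
     =\sum_{a\le\nu}\sum_{b\le\mu}
     \binom\nu a\binom\mu b\;
     \p_s^a\p_t^{b+1}u\cdot\p_s^{\nu-a}\p_t^{\mu-b}v
\]
for every pair $(\nu,\mu)$ with $2\nu+\mu\le 2k$. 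Writing $f=\p_tu$ and assigning the parabolic weight $i:=2a+b$ to the first factor $\p_s^a\p_t^bf$ and $j:=2(\nu-a)+(\mu-b)$ to the second factor, one has $i+j=2\nu+\mu\le 2k$ and $0\le i,j\le 2k$. Since the number of Leibniz terms is bounded in terms of $k$ alone, it suffices to bound $\Norm{\p_s^a\p_t^bf\cdot\p_s^{\nu-a}\p_t^{\mu-b}v}_{L^p(\Omega)}$ by the right hand side of the asserted inequality for each term, then take $p$-th powers and sum over the finitely many terms and over all $(\nu,\mu)$ with $2\nu+\mu\le 2k$.

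The two extreme cases are immediate and require no interpolation. If $j=0$ (equivalently $a=\nu$, $b=\mu$), the first factor is $\p_s^\nu\p_t^\mu(\p_tu)$, a derivative of $\p_tu$ of parabolic weight $i\le 2k$, hence $\Norm{\p_s^\nu\p_t^\mu(\p_tu)}_{L^p}\le\Norm{\p_tu}_{\Ww^{k,p}}$; pairing with $\Norm{v}_\infty$ gives the first term on the right. If $i=0$ (equivalently $a=b=0$), the first factor is simply $\p_tu$, so $\Norm{\p_tu}_\infty\le\Norm{u}_{\Cc^k}$ since $\p_tu$ has parabolic weight $1\le 2k$, while the second factor $\p_s^\nu\p_t^\mu v$ has weight $\le 2k$, hence $L^p$ norm $\le\Norm{v}_{\Ww^{k,p}}$; this yields the second term on the right.

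For a generic term with $i,j\ge 1$ I would invoke an anisotropic Gagliardo--Nirenberg inequality with parabolic weighting: for $w\in C^\infty(\overline\Omega)$ and integers $a,b\ge 0$ with $0<2a+b\le 2k$,
\[
     \Norm{\p_s^a\p_t^bw}_{L^{2kp/(2a+b)}(\Omega)}
     \le C\,\Norm{w}_{\Ww^{k,p}(\Omega)}^{(2a+b)/2k}\,
     \Norm{w}_{L^\infty(\Omega)}^{1-(2a+b)/2k},
\]
with $C=C(k,p,|\Omega|)$; this can be derived from the one--dimensional Gagliardo--Nirenberg inequalities applied successively in $t$ and $s$, or from the isotropic inequality together with the parabolic rescaling $(s,t)\mapsto(\lambda^2 s,\lambda t)$, and it requires $\Omega$ to admit a bounded Sobolev extension operator (which holds for the cylinders $Z_r$ to which the lemma is applied, and to which one restricts in general). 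Applying this to $f$ with weight $i$ and to $v$ with weight $j$, and then H\"older's inequality with exponents $2kp/i$ and $2kp/j$ --- whose reciprocals sum to $(i+j)/(2kp)\le 1/p$, so on the bounded set $\Omega$ one loses only a factor depending on $|\Omega|$ --- gives
\[
     \Norm{\p_s^a\p_t^bf\cdot\p_s^{\nu-a}\p_t^{\mu-b}v}_{L^p(\Omega)}
     \le C\,\Norm{f}_{\Ww^{k,p}}^{i/2k}\Norm{f}_\infty^{1-i/2k}
     \Norm{v}_{\Ww^{k,p}}^{j/2k}\Norm{v}_\infty^{1-j/2k}.
\]
Using $i/2k+j/2k\le 1$ together with the Sobolev embedding $\Ww^{k,p}(\Omega)\hookrightarrow C^0(\overline\Omega)$ (valid since $p>2$; see e.g.~\cite[App.~B.1]{MS}) to convert the ``excess'' power of $\Norm{v}_\infty$ into a power of $\Norm{v}_{\Ww^{k,p}}$, the right hand side is $\le C\big(\Norm{f}_{\Ww^{k,p}}\Norm{v}_\infty\big)^{i/2k}\big(\Norm{f}_\infty\Norm{v}_{\Ww^{k,p}}\big)^{1-i/2k}$, and Young's inequality bounds it by $C\big(\Norm{f}_{\Ww^{k,p}}\Norm{v}_\infty+\Norm{f}_\infty\Norm{v}_{\Ww^{k,p}}\big)$. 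Finally $\Norm{f}_{\Ww^{k,p}}=\Norm{\p_tu}_{\Ww^{k,p}}$ and $\Norm{f}_\infty=\Norm{\p_tu}_\infty\le\Norm{u}_{\Cc^k}$, which is exactly the bound required for this term; summing over all Leibniz terms and all $(\nu,\mu)$ with $2\nu+\mu\le 2k$ completes the proof.

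I expect the anisotropic Gagliardo--Nirenberg inequality to be the only real obstacle: one must verify that a mixed derivative $\p_s^a\p_t^b$ of parabolic weight $2a+b$ interpolates between $\Ww^{k,p}$ and $L^\infty$ exactly like an order-$(2a+b)$ derivative between an ``order-$2k$'' space and $L^\infty$, and that this survives passage to a bounded domain. Once that scaled interpolation inequality and the corresponding embedding $\Ww^{k,p}\hookrightarrow C^0$ are in hand, the remainder is the routine Moser-estimate bookkeeping sketched above.
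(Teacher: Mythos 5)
Your proposal takes a genuinely different route from the paper.  You set up a Moser/Kato--Ponce type argument: expand via Leibniz, interpolate each mixed factor $\p_s^a\p_t^b$ between $\Ww^{k,p}$ and $L^\infty$ with an anisotropic (parabolically weighted) Gagliardo--Nirenberg inequality, pair by H\"older and Young, and convert the excess $L^\infty$ factors via $\Ww^{k,p}\hookrightarrow C^0$.  The paper instead proceeds by a short induction on $k$: it isolates the identity~(\ref{eq:ell+1}) which writes $\Norm{\p_tu\cdot v}_{\Ww^{\ell+1,p}}$ in terms of $\Ww^{\ell,p}$ norms of four products each again of the form $\p_t(\text{something})\cdot(\text{something})$, applies the induction hypothesis to each summand, and closes the loop using only $\Norm{\p_s^\nu\p_t^\mu(\p_tu)}_\infty\le\Norm{u}_{\Cc^{k+1}}$, $\Norm{\cdot}_{\Ww^{k,p}}\le\abs{\Omega}^{1/p}\cdot\const\cdot\Norm{\cdot}_{\Cc^k}$, and the embedding $\Ww^{1,p}\hookrightarrow C^0$ (for $p>2$).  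What the paper's route buys is that no interpolation inequality is needed at all; only the Leibniz rule and one Sobolev embedding.  What your route would buy, if completed, is a non-inductive argument that directly mirrors the classical Moser product estimate.

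The genuine gap in your proposal is precisely the anisotropic Gagliardo--Nirenberg inequality
\[
     \Norm{\p_s^a\p_t^bw}_{L^{2kp/(2a+b)}(\Omega)}
     \le C\,\Norm{w}_{\Ww^{k,p}(\Omega)}^{(2a+b)/2k}\,
     \Norm{w}_{L^\infty(\Omega)}^{1-(2a+b)/2k},
\]
which you acknowledge but do not prove.  It is consistent with the parabolic scaling $(s,t)\mapsto(\lambda^2 s,\lambda t)$, so it is plausible, but it is not standard, it is not cited from any reference the paper uses, and it is genuinely delicate: for mixed derivatives with parabolic weight $2a+b$ one cannot simply iterate one-dimensional Gagliardo--Nirenberg inequalities in $t$ and then in $s$ without reckoning with the $s$-derivatives carrying twice the weight of $t$-derivatives, and passing to a bounded domain with constant depending only on $k$ and $\abs{\Omega}$ (as the lemma claims) requires an anisotropic extension argument that you gesture at but do not give.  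In its current state the proposal therefore reduces the lemma to an unproved auxiliary estimate of comparable difficulty, whereas the paper's induction closes using only material already established in the paper.  If you want to follow your outline, proving the anisotropic interpolation inequality (with the stated dependence of constants) is the task that remains; alternatively, replacing the interpolation step by the paper's inductive reduction via~(\ref{eq:ell+1}) gives a shorter and self-contained argument.
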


\begin{proof}
The proof is by induction on $k$.
By definition of the
$\Ww^{\ell,p}$ norm
\begin{equation}\label{eq:ell+1}
\begin{split}
     \Norm{\p_tu\cdot v}_{\Ww^{\ell+1,p}}
    &\le 
     \Norm{\p_tu\cdot v}_{\Ww^{\ell,p}}
     +\Norm{\p_t\p_tu\cdot v+\p_tu\cdot\p_tv}_{\Ww^{\ell,p}}\\
    &\quad+\Norm{\p_t\p_t\p_tu\cdot v+2\p_t\p_tu\cdot\p_tv
     +\p_tu\cdot\p_t\p_tv}_{\Ww^{\ell,p}}\\
    &\quad
     +\Norm{\p_s\p_tu\cdot v+\p_tu\cdot\p_sv}_{\Ww^{\ell,p}}.
\end{split}
\end{equation}

\vspace{.05cm}
\noindent
{\it Case $k=1$.}
Estimate~(\ref{eq:ell+1}) for $\ell=0$ shows that
\begin{equation*}
\begin{split}
     \Norm{\p_tu\cdot v}_{\Ww^{1,p}}
    &\le\bigl(\Norm{\p_tu}_p
     +\Norm{\p_t\p_tu}_p+\Norm{\p_t\p_t\p_tu}_p
     +\Norm{\p_s\p_tu}_p\bigr)\Norm{v}_\infty\\
    &\quad+\bigl(\Norm{\p_tu}_\infty
     +2\Norm{\p_t\p_tu}_\infty\bigr) \Norm{\p_tv}_p\\
    &\quad+\Norm{\p_tu}_\infty\bigl(\Norm{\p_t\p_tv}_p
     +\Norm{\p_sv}_p\bigr)
\end{split}
\end{equation*}
and this proves the lemma for $k=1$.

\vspace{.05cm}
\noindent
{\it Induction step
$k\Rightarrow k+1$.}
Consider estimate~(\ref{eq:ell+1}) for $\ell=k$,
then inspect the right hand side term by term
using the induction hypothesis to conclude the proof.
To illustrate this we give full details
for the last term in~(\ref{eq:ell+1}),
namely
\begin{equation*}
\begin{split}
     \Norm{\p_tu\cdot\p_sv}_{\Ww^{k,p}}
    &\le c\left(\Norm{\p_tu}_{\Ww^{k,p}}
     \Norm{\p_sv}_\infty
     +\Norm{u}_{\Cc^k}
     \Norm{\p_sv}_{\Ww^{k,p}}\right)\\
    &\le c\left(c^\prime\Abs{\Omega} \Norm{\p_tu}_{\Cc^k}
     \Norm{\p_sv}_{\Ww^{1,p}}
     +\Norm{u}_{\Cc^k}
     \Norm{v}_{\Ww^{k+1,p}}\right)\\
    &\le c\left(c^\prime\Abs{\Omega} \Norm{u}_{\Cc^{k+1}}
     \Norm{v}_{\Ww^{2,p}}
     +\Norm{u}_{\Cc^k}
     \Norm{v}_{\Ww^{k+1,p}}\right).
\end{split}
\end{equation*}
Step one is by the induction
hypothesis. In step two
we pulled out the $L^\infty$ norms
of all derivatives of $\p_tu$
and for the term $\p_sv$
we used the Sobolev embedding
$\Ww^{1,p}\subset W^{1,p}\hookrightarrow C^0$
with constant $c^\prime$.
Here the assumptions $p>2$
and $\Omega$ bounded enter.
Step three is obvious.
Now
$\Ww^{k+1,p}\hookrightarrow\Ww^{2,p}$
since $k\ge1$.
\end{proof}

\begin{corollary}\label{cor:product-Sobolev}
Fix a constant $p>2$ and
a bounded open subset $\Omega\subset\R^2$.
Then for every 
integer $k\ge 1$
there is a constant $d=d(k,\abs{\Omega})$
such that
\begin{align*}
     \Norm{\p_tu\cdot\p_tu}_{\Ww^{k,p}}
    &\le d_k\Norm{u}_{\Cc^k}
     \Norm{\p_tu}_{\Ww^{k,p}}
 \\
     \Norm{\p_tu\cdot\p_t\p_tu}_{\Ww^{k,p}}
    &\le d_k \Norm{u}_{\Cc^k}
     \left(
     \Norm{\p_tu}_{\Ww^{k,p}}
     +\Norm{\p_t\p_tu}_{\Ww^{k,p}}
     \right)
 \\
     \Norm{\p_tu\cdot\p_tu\cdot\p_tu}_{\Ww^{k,p}}
    &\le d_k\Norm{u}_{\Cc^k}^2
     \Norm{\p_tu}_{\Ww^{k,p}}
\end{align*}
for every function $u\in
C^\infty(\overline{\Omega})$.
\end{corollary}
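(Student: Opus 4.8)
The plan is to obtain all three inequalities as immediate consequences of Lemma~\ref{le:product-Sobolev}, using only the elementary remark that any sup norm $\Norm{\p_s^\nu\p_t^\mu u}_\infty$ with $2\nu+\mu\le 2$ is bounded by $\Norm{u}_{\Cc^1}\le\Norm{u}_{\Cc^k}$ for $k\ge 1$; in particular both $\Norm{\p_tu}_\infty$ and $\Norm{\p_t\p_tu}_\infty$ are $\le\Norm{u}_{\Cc^k}$.

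For the first estimate I would apply Lemma~\ref{le:product-Sobolev} with $v=\p_tu$. The resulting right-hand side is $c\left(\Norm{\p_tu}_{\Ww^{k,p}}\Norm{\p_tu}_\infty+\Norm{u}_{\Cc^k}\Norm{\p_tu}_{\Ww^{k,p}}\right)$, and replacing $\Norm{\p_tu}_\infty$ by $\Norm{u}_{\Cc^k}$ in the first summand gives the claimed bound $d_k\Norm{u}_{\Cc^k}\Norm{\p_tu}_{\Ww^{k,p}}$. For the second estimate I would apply the lemma with $v=\p_t\p_tu$, noting that $\p_t\p_tu\in C^\infty(\overline\Omega)$, and, in exactly the same way, absorb the factor $\Norm{\p_t\p_tu}_\infty$ into $\Norm{u}_{\Cc^k}$, obtaining $d_k\Norm{u}_{\Cc^k}\left(\Norm{\p_tu}_{\Ww^{k,p}}+\Norm{\p_t\p_tu}_{\Ww^{k,p}}\right)$.

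For the third (cubic) estimate I would \emph{not} start from scratch but bootstrap from the first one: write the triple product as $\p_tu\cdot v$ with $v=(\p_tu)^2\in C^\infty(\overline\Omega)$, apply Lemma~\ref{le:product-Sobolev}, then bound $\Norm{(\p_tu)^2}_\infty\le\Norm{\p_tu}_\infty^2\le\Norm{u}_{\Cc^k}^2$ in the first summand and $\Norm{(\p_tu)^2}_{\Ww^{k,p}}\le d_k\Norm{u}_{\Cc^k}\Norm{\p_tu}_{\Ww^{k,p}}$ — which is precisely the first estimate of the corollary — in the second summand. Adjusting the constant $d_k$ absorbs the various multiplicative factors.

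There is essentially no obstacle here: the whole content sits in Lemma~\ref{le:product-Sobolev}. The one point requiring a moment's care is that the parabolic $\Cc^1$ norm, and hence $\Cc^k$ for $k\ge1$, really does dominate $\Norm{\p_t\p_tu}_\infty$ (the multi-index $(\nu,\mu)=(0,2)$ satisfies $2\nu+\mu\le2$), so that all three estimates stay purely in terms of $\Norm{u}_{\Cc^k}$ without any stray $\p_s$-derivative sup norm slipping in; the other is simply to sequence the three bounds so that the cubic one reuses the quadratic one rather than being proved independently.
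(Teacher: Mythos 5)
Your proposal is correct and matches the paper's proof essentially verbatim: the paper also applies Lemma~\ref{le:product-Sobolev} with $v=\p_tu$, $v=\p_t\p_tu$, and $v=\p_tu\cdot\p_tu$ respectively, using $\Norm{\p_tu}_\infty,\Norm{\p_t\p_tu}_\infty\le\Norm{u}_{\Cc^k}$ and, for the cubic estimate, feeding the first estimate back in. No differences of substance.
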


\begin{proof}
All three estimates
follow from
lemma~\ref{le:product-Sobolev}.
To obtain the first 
and the second estimate 
set $v=\p_tu$ and
$v=\p_t\p_tu$, respectively,
and use that
$$
     \Norm{\p_tu}_\infty
     \le \Norm{u}_{\Cc^k}
     ,\qquad
     \Norm{\p_t\p_tu}_\infty
     \le \Norm{u}_{\Cc^k}.
$$
To obtain
the third estimate
set $v=\p_tu\cdot\p_tu$
and use in addition the
first estimate of
corollary~\ref{cor:product-Sobolev}.
\end{proof}

\begin{proof}[Proof of theorem~\ref{thm:bootstrap}]
The $\Ww^{k+1,p}$ norm of $u$
is equivalent to the 
sum of the $\Ww^{k,p}$ norms
of $u$, $\p_tu$, $\p_su$, and $\p_t\p_tu$.
Apply
proposition~\ref{prop:bootstrap}~(i--iii).
\end{proof}


\end{document}